\newcommand{\E}{\mathbb{E}} 
\newcommand{\R}{\mathbb{R}}
\newcommand\supp{\mathop{\rm{ supp}}}
\def\eq{\begin{equation}}
\def\en{\end{equation}}
\newtheorem{theorem}{Theorem}[section]
\newtheorem{corollary}[theorem]{Corollary}
\newtheorem{lemma}[theorem]{Lemma}
\newtheorem{proposition}[theorem]{Proposition}
\theoremstyle{definition}
\def\DD{{\Bbb L}}
\def\e{{\varepsilon}}
\def\D{\Delta}
\def\a{\alpha}
\def\sm{\setminus}
\def\b{\beta}
\def\MM{{\cal M}}
\def\TT{{\cal T}}
\def\TT{{\mc B}}
\def\e{\varepsilon}
\def\phi{\varphi}
\def\g{\gamma}
\def\la{\lambda}
\def\k{\kappa}
\def\r{\rho}
\def\de{\delta}
\def\D{\Delta}
\def\L{\Lambda}
\def\G{\Gamma}
\def\P{{\Phi}}
\def\T{\T}
\def\MM{\mathcal M}
\def\TT{{\mathcal T}}
\def\bb{{\mathfrak{b}}}
\def\nn{{\mathfrak{n}}}
\def\DD{{\mathbb D}}
\def\LL{{\mc L}}
\def\V|{{\Vert}}
\def\LLL{{\mathfrak L}}
\def\bb{{\bf{b}}}
\def\d{{\rm d}}
\def\E{\mathbb{E}}
\def\I{\mathcal{I}}
\def\one{\mathbbmss{1}}
\def\mc{\mathcal}
\def\ms{\mathsf}
\def\one{\mathbbmss{1}}
\def\P{\mathbb{P}}
\def\R{\mathbb{R}}
\def\TF{t_{\ms f}}
\def\XTi{X_i}
\def\bdde{\b^{\de}}
\def\bddde{\b^{\uparrow,{\de}}}
\def\gdde{\g^{\de}}
\def\Lla{L_{\la}}
\def\muR{\mu_{\ms{R}}}
\def\nuR{\nu_{\ms{R}}}
\def\lla{l_\la}
\def\Zla{Z^\la}
\def\Yla{Y^\la}
\def\mula{\mu_\la}
\def\fla{f_\la}
\def\gla{g_\la}
\def\Zdla{Z^{\la,\de}}
\def\itf{[0,\TF]}
\def\itfW{\itf \times W}
\def\itfWY{\itfW \times Y^\la}
\def\itfiww{\itf \times [0,1] \times W^2}
\def\itW{[0,t] \times W}
\def\Ia{I_\alpha}
\def\nns{\nn^*}
\def\nnsp{\nn^{*,+}}
\def\nnsm{\nn^{*,-}}
\def\Zlap{Z^{\la,+}}
\def\Zlam{Z^{\la,-}}
\def\Zlas{Z^{\la,*}}
\def\MMs{\MM^*}
\def\mus{\mu^*}
\def\mulim{\musp \otimes \muR}
\def\musp{\mu^{\ms{s}}_{\ms{T}}}
\def\must{\mu^{\ms{t}}_{\ms{T}}}
\def\LLLd{\LLL^\de_\la}
\def\rla{r_\la}
\keywords{large deviations, measure-valued differential equations, entropy, capacity, relay}
\subjclass[2010]{Primary 60F10; secondary 60K35}
\begin{document}
\author{Christian Hirsch}
\author{Benedikt Jahnel}
\author{Robert Patterson}
\thanks{Weierstrass Institute Berlin, Mohrenstr. 39, 10117 Berlin, Germany; E-mail: {\tt christian.hirsch@wias-berlin.de}, {\tt benedikt.jahnel@wias-berlin.de}, {\tt robert.patterson@wias-berlin.de}.}

\title{Space-time large deviations in capacity-constrained relay networks}

\date{\today}

\begin{abstract}
We consider a single-cell network of random transmitters and fixed relays in a bounded domain of Euclidean space. The transmitters arrive over time and select one relay according to a spatially inhomogeneous preference kernel. Once a transmitter is  connected to a relay, the connection remains and the relay is occupied. If an occupied relay is selected by another transmitters with later arrival time, this transmitter becomes frustrated. We derive a large deviation principle for the space-time evolution of frustrated transmitters in the high-density regime.
\end{abstract}

\maketitle 



\section{Introduction and main results}
We consider a single-cell communication network of random \textit{transmitters} and fixed \textit{relays}. Every transmitter tries to send data to a central entity via one relay according to a spatially dependent preference function. Each relay can only serve one transmitter and the transmitters are competing for this shared capacity. In particular, a group of transmitters might not be successful in finding relays to release their data to and therefore become \textit{frustrated}. We will assume that the start of the data transmission is time dependent so that the set of frustrated transmitters gradually increases over time. We present a large deviation principle (LDP) for the measure-valued process of frustrated transmitters 
%
when their number increases. 

\medskip
The motivation for this work is to derive the LDP  for a capacity-constrained network embedded in the Euclidean space. So far in the literature, there have been two separate approaches. On the one hand, considerable work has been done to understand the large deviation behavior of sophisticated capacity-constrained networks in a mean-field setting, see for example~\cite{gramMel1,gramMel2}. On the other hand, driven by recent developments in wireless networks, from an engineering perspective, there has been a surge in research activities to develop a fundamental understanding of spatial effects in models that are based on stochastic geometry~\cite{caireDynamic,baccelliDynamic}. In the present paper, we analyze a simple model of a spatial relay network which is to be seen as a first step into the realm of space-time LDPs for capacity-constrained networks.

\medskip
More specifically, consider fixed relays at locations $Y^\la=(y_j)_{1\le j\le n_\la}$ in a compact window $W\subset\R^d$ with boundaries of vanishing Lebesgue measure. We investigate the high-intensity regime and thus assume that the empirical distribution 
$$l_{\la}=\la^{-1}\sum_{y_j\in Y^\la}\de_{y_j}$$
converges weakly, as $\la\uparrow\infty$, to some probability measure $\mu_{\ms R}$ on $W$. 
Further there will be transmitters distributed according to a Poisson point process $X^\la$ in $W$. Its intensity measure is of the form $\la\mu^{\ms s}_{\ms{T}}$ with $\la>0$ and $\mu^{\ms s}_{\ms T}$ a finite measure on $W$ which is absolutely continuous w.r.t.~the Lebesgue measure. Initially, all relays are idle and the transmitters do not send data. For each transmitter $\XTi$ there is a randomly distributed time $T_i\in[0,\TF]$ and in $(T_i,\TF]$ there will be constant data transmission. The times $T_i$ are assumed to be iid with distribution $\mu^{\ms t}_{\ms T}$ which is absolutely continuous w.r.t.~the Lebesgue measure on $[0,\TF]$. 

The transmitters are assumed to have basic knowledge about the transmission quality to each of the relays. More precisely, each transmitter $\XTi$ uses this information to assign a spatial preference $\k(\XTi,y_{j})\in[0,1]$ for the connection from $\XTi$ to $y_{j}$. 
At time $T_i$ the user $\XTi$ tries to send its data to a relay $y_{j}$ chosen according to the \textit{preference kernel}
\begin{align}\label{Kappa}
\k(y_{j}| \XTi)=\frac{\k(\XTi, y_{j})}{\sum_{y_{k}\in Y}\k(\XTi, y_{k})},
\end{align}
so that the selection probability is proportional to the spatial preference function $\k$. In our model, data transmission fails if the chosen relay is already occupied by some other transmitter that has established a connection earlier in time. 

\medskip
Since every transmitter $\XTi$ keeps the connection until time $t_{\ms f}$, the time-dependent status of its target relay $Y^{(i)}(t)\in\{0,1\}$ is a step function, starting in $0$ as being idle and jumping to $1$ at the time where it becomes busy. In particular $\G^\la=\{\G^\la_t\}_{0\le t\le\TF}$ with 
\begin{align}\label{Busy_Process}
	\G^\la_t=\frac{1}{\la}\sum_{\XTi\in X}\one\{{\XTi(t)=1}\}\de_{\XTi}
\end{align}
denotes the normalized, time-dependent random measure of frustrated transmitters.
Here $X_i$ is the (time invariant) position of a relay, but $\XTi(t)$ represents the time-dependent status of frustration of the transmitter $\XTi$. More precisely, $\XTi(t)$ equals $0$ for $t<T_i$ and jumps to $1$ at $t=T_i$ if the chosen relay is already occupied, i.e. $Y^{(i)}(t-)=1$. 
Figure~\ref{Fig} provides a snapshot of the relay network after a finite time.

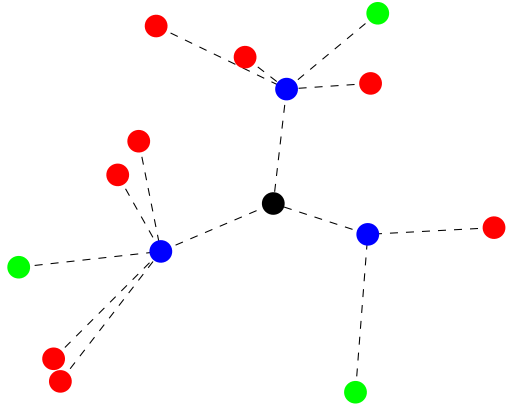
\begin{figure}[!htpb]
\centering
\begin{tikzpicture}[xscale=0.025,yscale=0.025,rotate=65]

\draw[dashed] (307.9873,269.3195)--(250,250);
\draw[dashed] (364.7403,242.8243)--(307.9873,269.3195);
\draw[dashed] (309.3138,345.6901)--(307.9873,269.3195);
\draw[dashed] (314.1278,296.2691)--(307.9873,269.3195);
\draw[dashed] (329.4118,230.5234)--(307.9873,269.3195);
\draw[dashed] (201.9386,292.8992)--(250,250);
\draw[dashed] (162.7232,357.1526)--(201.9386,292.8992);
\draw[dashed] (256.1847,198.0011)--(250,250);
\draw[dashed] (287.4920,139.2742)--(256.1847,198.0011);
\draw[dashed] (177.5109,168.4481)--(256.1847,198.0011);
\draw[dashed] (229.1610,330.6283)--(201.9386,292.8992);
\draw[dashed] (126.4416,319.9642)--(201.9386,292.8992);
\draw[dashed] (117.0718,311.6700)--(201.9386,292.8992);
\draw[dashed] (250.0329,328.1565)--(201.9386,292.8992);

\fill[color=black] (250,250) circle(6);

\fill[color=blue] (307.9873,269.3195) circle(6);
\fill[color=green] (364.7403,242.8243) circle(6);
\fill[color=red] (309.3138,345.6901) circle(6);
\fill[color=red] (314.1278,296.2691) circle(6);
\fill[color=red] (329.4118,230.5234) circle(6);

\fill[color=red] (250.0329,328.1565) circle(6);
\fill[color=red] (117.0718,311.6700) circle(6);
\fill[color=red] (126.4416,319.9642) circle(6);

\fill[color=red] (229.1610,330.6283) circle(6);
\fill[color=blue] (256.1847,198.0011) circle(6);
\fill[color=red] (287.4920,139.2742) circle(6);
\fill[color=green] (177.5109,168.4481) circle(6);
\fill[color=blue] (201.9386,292.8992) circle(6);
\fill[color=green] (162.7232,357.1526) circle(6);

\end{tikzpicture}
\caption{Collection of transmitters (green and red) communicating with one central entity (black) via one relay (blue) each. Red transmitters are frustrated due to low capacity at the associated relay. Green transmitters are satisfied as they successfully send data to a relay.}
\label{Fig}
\end{figure}

\medskip
From the perspective of the network operator, it is critical to understand the time- and space-dependent process of frustrated transmitters in large networks. Once this understanding is achieved, it is possible to answer questions like:
\begin{center}
	\emph{What is the overall proportion of transmitters which are frustrated at a given point in time?}
	\emph{	Are most of the frustrated transmitters located in a specific area?}
\end{center}
 Our main result provides a probabilistic description of the process of frustrated transmitters in an asymptotic regime where the number of devices tends to infinity. In particular, we perform a large deviation analysis for the empirical measure of frustrated transmitters $\G^\la$, where rates of convergence for unlikely events are derived.

\subsection{Large deviations without preference}
Note that $\G^\la$ allows us to keep track of transmitter locations but not on the chosen connections to specific relays. 
In particular, for general preference kernels, $\G^\la$ is not Markovian since the required spatial information of occupied relays for a new transmitter request at time $t$ cannot be extracted from $\G^\la_{t-}$. However, for $\k\equiv 1$, the Markovianity of $\G^\la$ can be preserved since transmitters have no spatial preference in their choice of relays. 
Therefore, we establish the case $\k\equiv 1$ first and use it as a basis for the general case.

For this, we first note that
the process of frustrated transmitters $\G^\la$ can be recovered from the process of {\it{satisfied transmitters}} $B^\la=\{B^\la_t\}_{0\le t\le\TF}$. This process is easier to describe as it coincides with the number of busy relays.
More precisely, when at time $t=T_i$ a transmitter request from $X_i$ arrives, the chosen relay is already busy with probability given by the total proportion of idle relays and the transmitter becomes frustrated. In this case, $B^\la$ stays constant at $t$. Otherwise, if the chosen relay is idle, the relay becomes busy and $B^\la$ grows by $\la^{-1}\de_{X_i}$ at $t$. As the number of satisfied transmitters equals the number of busy relays, this has probability $1-B^\la_{t-}(W)/r_{\la}$ where $r_{\la}=|Y^\la|/\la$. 
Note that this random choice of relays can be encoded by assigning a uniform random variable $U_i\in[0,1]$ to transmitter $\XTi$. If $U_i\in[B^\la_{t-}(W)/r_{\la},1]$ then $\XTi$ connects to an idle relay. The encoding of the spatial relay configuration into a $[0,1]$-valued threshold is illustrated in Figure~\ref{flatFig}.

\begin{figure}[!htpb]
\centering
\begin{tikzpicture}[scale = 0.8]

	\draw plot [smooth cycle] coordinates {(0,0) (3,-1) (5,0) (6,2) (3,4) (-1,2)};

\fill[color=black] (2,6) circle(0.25);

	\draw[dashed] (2,6)--(1,2) ;
\draw[dashed] (2,6)--(5,2.25) ;
\draw[very thick] (2,6)--(2.8,0.2) ;
\draw[dashed] (2,6)--(3,1.5) ;
\draw[dashed] (2,6)--(3,3) ;

\fill[color=red] (1,2) circle(0.25);
\fill[color=red] (5,2.25) circle(0.25);
\fill[color=green] (2.8,0.2) circle(0.25);
\fill[color=red] (3,1.5) circle(0.25);
\fill[color=green] (3,3) circle(0.25);

\draw (1,2) circle(0.25);
\draw (5,2.25) circle(0.25);
\draw (2.8,0.2) circle(0.25);
\draw (3,1.5) circle(0.25);
\draw (3,3) circle(0.25);

	\draw[->, >=latex, blue!20!white, line width = 10pt, bend  left] (4,4) to[in =155, out = 45] (9,4);

	\draw (10,0)--(11,0)--(11,5)--(10,5)--cycle;

\fill[color=black] (8.5,2.5) circle(0.25);

	\fill[color=green!40!white] (10,3)--(10,5)--(11,5)--(11,3)--cycle;
	\fill[color=red!40!white] (10,0)--(10,3)--(11,3)--(11,0)--cycle;

\fill[color=red] (10.5,0.5) circle(0.25);
	\draw (10.5,0.5) circle(0.25);
\fill[color=red] (10.5,1.5) circle(0.25);
	\draw (10.5,1.5) circle(0.25);
\fill[color=red] (10.5,2.5) circle(0.25);
	\draw (10.5,2.5) circle(0.25);
\fill[color=green] (10.5,3.5) circle(0.25);
	\draw (10.5,3.5) circle(0.25);
\fill[color=green] (10.5,4.5) circle(0.25);
\draw (10.5,4.5) circle(0.25);

\draw[dashed] (8.5,2.5)--(10,0.5) ;
\draw[dashed] (8.5,2.5)--(10,1.5) ;
\draw[dashed] (8.5,2.5)--(10,2.5) ;
\draw[very thick] (8.5,2.5)--(10,3.5) ;
\draw[dashed] (8.5,2.5)--(10,4.5) ;

	\draw[very thick] (10,0)--(10.1,0);
	\draw[very thick] (10,1.0)--(10.1,1.0);
	\draw[very thick] (10,2.0)--(10.1,2.0);
	\draw[very thick] (10,3.0)--(10.1,3.0);
	\draw[very thick] (10,4.0)--(10.1,4.0);
	\draw[very thick] (10,5.0)--(10.1,5.0);

%

\end{tikzpicture}
\caption{A transmitter (black) chooses a relay at random (left), where the relay can either be already busy (red) or idle (green). Without spatial preferences the relay information can be reduced to a single threshold in $[0,1]$ (right).}
\label{flatFig}
\end{figure}
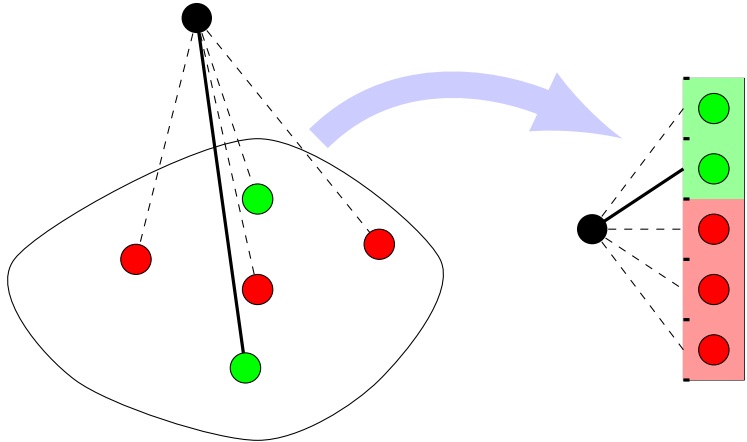

More precisely, the evolution of the process of satisfied transmitters $B^\la$ is given as the solution of the time integral equation
\begin{align}\label{DE_Discrete}
B_t(\d x)=\int_0^t\int_{B_{s-}(W)/r_{\la}}^1L_\la(\d s,\d u,\d x)=\int_0^tL_\la(\d s,[B_{s-}(W)/r_{\la},1],\d x),
\end{align}
where
$$L_\la=\la^{-1}\sum_{\XTi\in X^\la}\de_{(T_i,U_i,X_i)}$$ 
is the empirical measure of transmitters. In particular, the Poisson point process $X^\la$ of transmitters carries marks for data request time and choice variable. Its intensity measure $\la\mu_{\ms T}$ in $V=[0,\TF]\times[0,1]\times W$ is given by
$$\mu_{\ms T}=\mu^{\ms t}_{\ms T}\otimes U\otimes\mu^{\ms s}_{\ms T}$$ 
and $U$ is the uniform distribution on $[0,1]$. 

In words, equation~\eqref{DE_Discrete} describes the evolution of the empirical measure of satisfied transmitters.
This measure gains mass $\la^{-1}$ at position $x$ if there is an additional transmitter request from position $x$ at time $t$ and this transmitter picks a relay that is idle at time $t-$.  From $B^\la$ the random measure of frustrated transmitters can be recovered via
\begin{align*}
\G^\la_t(\d x)=L_\la([0,t],[0,1],\d x)-B^\la_t(\d x).
\end{align*}

\medskip
Note that, as the number of devices tends to infinity, in the limit, the point masses associated to individual devices disappear and the picture becomes continuous. Thus, we need to introduce processes of frustrated transmitters for absolutely continuous measures. More precisely, let $\nu$ be a finite measure in $\MM=\MM(V)$
where $\nu(\d t,\d u,\d x)$ has the interpretation of the intensity of transmitters at $\d x$ with data-entry time $\d t$ and choice variable $\d u$. 
Then, for general measures and normalized number of relays $r$, equation \eqref{DE_Discrete} has the form
\begin{align}\label{DE_Cont_r}
\b_t(\d x)=\int_0^t\int_{\b_{s-}(W)/r}^1\nu(\d s,\d u,\d x)=\int_0^t\nu(\d s,[\b_{s-}(W)/r,1],\d x).
\end{align}
For illustration purposes let us present two examples here. 
\begin{enumerate}
\item For the empirical measure $\nu=L_\la$ of the Poisson point processes $X^\la$ and $r=r_\la$, the unique solution $\b$ for \eqref{DE_Cont_r} is given by $B^\la$.
\item For the a priori measure $\mu_{\ms T}$ as a driver and normalized relay number $r=1$, the unique solution of \eqref{DE_Cont} is given by $\b\mu^{\ms s}_{\ms T}$ where
$$\b_t=\mu^{\ms s}_{\ms{T}}(W)^{-1}(1-e^{-\mu^{\ms s}_{\ms{T}}(W)\mu^{\ms t}_{\ms{T}}([0,t])}).$$
\end{enumerate}

\medskip
Note that, instead of \eqref{DE_Cont_r} with $r=1$, it suffices to consider the scalar equation
\begin{align}\label{DE_Cont}
\b_t=\int_0^t\nu(\d s,[\b_{s-},1],W).
\end{align}
Using Schauder's fixed point theorem and monotonicity, we will show in Subsection~\ref{Existence and uniqueness of solutions} that existence and uniqueness of solutions of \eqref{DE_Cont} for 
absolutely continuous measures can be established.
With the steps we have just described we arrive at a solution which we will denote $\b(\nu)$.
Moreover, in the absolutely-continuous case, the solution will be continuous and increasing in time.

From the solution $\b(\nu)$ one can compute the normalized process of frustrated transmitters $\g(\nu)$ via the formula
\begin{align}
\label{frustUserDef}
\g_t(\nu)(\d x)=\nu([0,t],[0,1],\d x)-\b_t(\nu)(\d x).
\end{align}

\medskip
In the following theorem we show the LDP  for $\G^\la$ in the setting where $\k\equiv 1$. 
Recall the definition of the relative entropy
$$h(\nu|\mu)=\int\log\frac{\d\nu}{\d\mu}\d\nu-\nu(V)+\mu(V)$$
if $\nu\ll\mu$ with $h(\nu|\mu)=\infty$ otherwise.
We consider $\G^\la$ as a measure-valued process and work in the Skorohod space. That is, we consider
$$\DD=\{f\in \MM(W)^{[0,\TF]}:\, f \text{ is c\`adl\`ag w.r.t.~the weak topology on }\MM(W)\}$$
equipped with the Skorohod topology, for details see for example \cite{FeKu06}.

\medskip
\begin{theorem}\label{LDP_NoSpatial}
The family of measure-valued processes $\G^\la$ satisfies the LDP  in $\DD$ with good rate function given by $I(\g)=\inf_{\nu\in\MM:\, \g(\nu)=\g}h(\nu|\mu_{\ms T})$.
\end{theorem}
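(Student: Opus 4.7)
\emph{Proof plan.} My strategy is to combine a Sanov-type LDP for the driving empirical measure $L_\la$ with a continuous-mapping (contraction principle) argument applied to the fixed-point solution map of~\eqref{DE_Cont_r}.

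As the starting ingredient, I invoke the LDP for $L_\la$. Because $X^\la$ is a Poisson point process on the compact space $V=[0,\TF]\times[0,1]\times W$ with intensity measure $\la\mu_{\ms T}$, the classical Sanov-type LDP for normalised Poisson empirical measures yields that $L_\la$ satisfies the LDP in $\MM(V)$, endowed with the weak topology, with good rate function $h(\cdot|\mu_{\ms T})$.

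Next, I introduce the solution operator $\Phi\colon\nu\mapsto\g(\nu)$ defined through~\eqref{DE_Cont_r} (after absorbing the deterministic scaling $r_\la\to\mu_{\ms R}(W)$) and~\eqref{frustUserDef}. A direct comparison of~\eqref{DE_Discrete} with~\eqref{DE_Cont_r} gives the identity $\G^\la=\Phi(L_\la)$, so Theorem~\ref{LDP_NoSpatial} will follow from the contraction principle once $\Phi$ is shown to be continuous as a map from $\MM(V)$ to the Skorohod space $\DD$, at least on the sub-level sets $\{h(\cdot|\mu_{\ms T})\le c\}$. Goodness of $h$ then transfers to goodness of $I(\g)=\inf\{h(\nu|\mu_{\ms T}):\Phi(\nu)=\g\}$ via the fact that a continuous image of a compact set is compact.

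The main obstacle is the continuity of $\Phi$. The subtlety is that $L_\la$ is purely atomic, so $\b(L_\la)$ is a step function with jumps of size $\la^{-1}$, whereas for any $\nu$ in a sub-level set of $h$, absolute continuity of $\nu$ with respect to $\mu_{\ms T}$ forces $\b(\nu)$ to be continuous in time. I would handle this as follows: fix an absolutely continuous $\nu$ and a weakly convergent sequence $\nu_n\to\nu$ with $\sup_n h(\nu_n|\mu_{\ms T})<\infty$. The total-mass trajectories $t\mapsto\b_t(\nu_n)(W)$ are non-decreasing and uniformly bounded, so Helly's selection theorem produces subsequential pointwise limits $\tilde\b$. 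Passing to the limit in~\eqref{DE_Cont_r}, using weak convergence of $\nu_n$ together with monotonicity to control the moving threshold $\b_{s-}(\nu_n)/r_\la$, shows that $\tilde\b$ solves the limit equation driven by $\nu$. Uniqueness (from Subsection~\ref{Existence and uniqueness of solutions}) identifies $\tilde\b$ with $\b_\cdot(\nu)(W)$, and continuity of the limit upgrades pointwise convergence to uniform and hence to Skorohod convergence. Running the same monotonicity and uniqueness argument against continuous test functions on $W$ upgrades this to convergence of the measure-valued processes $\b(\nu_n)\to\b(\nu)$ in $\DD$, after which~\eqref{frustUserDef} propagates the convergence to $\g(\nu_n)\to\g(\nu)$ in $\DD$.
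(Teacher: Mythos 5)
Your overall strategy — Sanov for $L_\la$ followed by a continuity argument for the solution map $\Phi\colon\nu\mapsto\g(\nu)$ — is also the starting point of the paper's proof, but the precise way you propose to close the argument has a genuine gap that is exactly the obstacle the paper's machinery is built to circumvent.

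The critical issue is the claim ``Theorem~\ref{LDP_NoSpatial} will follow from the contraction principle once $\Phi$ is shown to be continuous \ldots at least on the sub-level sets $\{h(\cdot|\mu_{\ms T})\le c\}$.'' That inference is not valid. The empirical measure $L_\la$ is almost surely a purely atomic measure, hence $h(L_\la|\mu_{\ms T})=\infty$; $L_\la$ never lies in any sub-level set of the rate function. Continuity of $\Phi$ restricted to $\{h\le c\}$ therefore says nothing about the random object $\Phi(L_\la)=\G^\la$ whose LDP you are trying to obtain. What is actually needed is the exponential-approximation framework (\cite[Theorem 4.2.23]{dz98} or \cite[Theorem 1.13]{eiSchm}): construct a family of maps $\Phi^\de$ that (i) are globally $\tau$-continuous, (ii) satisfy $\sup_{h(\nu|\mu_{\ms T})\le\a}\Vert\Phi^\de(\nu)-\Phi(\nu)\Vert\to 0$ as $\de\downarrow 0$, and (iii) have $\Phi^\de(L_\la)$ a $\Vert\cdot\Vert$-exponentially good approximation of $\Phi(L_\la)$. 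It is step (iii) that lets you control the atomic input $L_\la$, and your proposal has nothing that plays this role. The paper builds the $\Phi^\de$ from a time-discretized two-step Picard iterate, proves (iii) via the previsibility bound of Lemma~\ref{Previsible} together with Poisson concentration, and only then deduces the continuity-on-level-sets statement you want as a by-product of (i) and (ii), not as an independent input.

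A few secondary gaps are worth noting as well. First, you propose to pass to the limit in~\eqref{DE_Cont_r} ``using weak convergence of $\nu_n$,'' but the integrand involves indicator functions of the moving intervals $[\b_{s-},1]$, which are not weakly continuous test functions; this is why the paper works in the stronger $\tau$-topology and leans on absolute continuity (Lemma~\ref{AbsoluteContinuity}) rather than on weak convergence. A Helly-plus-uniqueness argument alone is unlikely to control the feedback between the threshold and the driving measure without such a quantitative handle. Second, ``absorbing the deterministic scaling $r_\la\to\mu_{\ms R}(W)$'' is itself a non-trivial exponential-equivalence statement (Proposition~\ref{RelayAppr}), proved via the monotone comparison of Lemma~\ref{relayDomLem}; you cannot fold it silently into the definition of $\Phi$. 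Finally, going directly to Skorohod convergence in a single step skips the intermediate Dawson–G\"artner lift through finite-dimensional marginals and the separate exponential-tightness argument in $\DD$, both of which require their own work (Propositions~\ref{ProjLimLDP_1}, \ref{ProjLimLDP_2} and the tightness computation via the modulus of continuity $w'_\de$).
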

Note that a scalar variant of Theorem~\ref{LDP_NoSpatial} appears in~\cite[Theorem 2.7]{binBalls} when transmitters are interpreted as bins and relays as balls. This provides an application of this classical model from random discrete structures to communication networks. 
However, the results in~\cite{binBalls} cannot be used to prove Theorem~\ref{LDP_NoSpatial}, since the balls arrive at deterministic times, whereas in our setting also the arrival times of the transmitters are random and not necessarily homogeneous. 

\medskip
Moreover, in the scalar setting, it is possible to explicitly perform the minimization 
only over the choice component. Then, the rate function of Theorem~\ref{LDP_NoSpatial} can be expressed as 
\begin{align*}
I(\g)=\inf_{\b}\int_0^{\TF} \big[h(\dot\b_t|1-\b_t)+h(\dot\g_t|\b_t)\big]\d t.
\end{align*}
Here, we assumed $\mu^{\ms t}_{\ms T}(\d t)=\one_{[0,\TF]}(t)\d t$ for simplicity and the infimum is taken over increasing and absolutely continuous paths. This form of the rate function coincides with the one derived in \cite{ShWe95} in the setting of chemical-reaction networks. 
The interpretation of our communication-network evolution is then that the species of frustrated and satisfied transmitters are generated at rates $\b_t$ respectively $1-\b_t$. Theorem~\ref{LDP_NoSpatial} is not covered by the standard results in~\cite{Leon95, ShWe95}, since in our setting, these rates are not bounded away from zero.

In the literature attempts have been made to relax the assumption of strictly positive rates~\cite{dimRates}. However, our model is also not covered by the results in~\cite{dimRates}, as the crucial \emph{interior cone property} is violated: Once all relays are occupied it is not possible to move back to a state where satisfied users are generated at positive rate.

Finally, it is difficult to extend the interpretation of our network as a chemical reaction if we take spatial resolution into account. Then, the space of species would become uncountable since spatial locations have to be tracked. This is another reason for our decision to rely on marked Poisson point processes and measure-valued differential equations in Theorem~\ref{LDP_NoSpatial}.

\medskip
Let us also point out that, instead of the Skorohod topology, other topologies for process LDPs have been considered in the setting of chemical-reaction networks. As will be apparent from the proof, Theorem~\ref{LDP_NoSpatial} can be extended, for example, to the bounded-variation topology as considered for example in \cite{PaRe16}.

\subsection{Large deviations with preference}
In this section, we deal with spatial preferences of transmitters. As a consequence, the probability to send to a certain location depends on the spatial location of transmitters and relays and not just on the number of relays in the entire domain. In particular, the encoding into a single $[0,1]$-valued threshold falls short of capturing the information required for describing the evolution of frustrated transmitters. However, for a sufficiently smooth preference kernel, for a given transmitter,  the relay choice is approximately uniform in a neighborhood around any given relay location. Therefore, as an approximation we partition $W$ into a finite number of patches. 
Then to each of these patches we associate a separate $[0,1]$-valued threshold describing the approximate proportion of busy relays in that patch. 
This encoding is illustrated in Figure~\ref{prefFig}.
\begin{figure}[!htpb]
\centering
\begin{tikzpicture}[scale = 0.8]

	\draw plot [smooth cycle] coordinates {(0,0) (3,-1) (5,0) (6,2) (3,4) (-1,2)};


	\draw[dotted, very thick] (2,1.5)--(0,0);
	\draw[dotted, very thick] (2,1.5)--(4.8,-0.3);
	\draw[dotted, very thick] (2,1.5)--(2,2.5);
	\draw[dotted, very thick] (2,2.5)--(4,2.3);
	\draw[dotted, very thick] (2,2.5)--(1,3.3);
	\draw[dotted, very thick] (5.7,0.9)--(4,2.3);
	\draw[dotted, very thick] (4.8,3.1)--(4,2.3);


	\draw[fill, color=red] (1,2)--(1.3,2)--(1.3,2.2)--(1,2.2)--cycle; 
	\draw[fill, color=green] (1,2.2)--(1.3,2.2)--(1.3,2.8)--(1,2.8)--cycle; 
	\draw[thick] (1,2)--(1.3,2)--(1.3,2.8)--(1,2.8)--cycle;

	\draw[fill, color=red] (5,2.25)--(5.3,2.25)--(5.3,2.75)--(5,2.75)--cycle; 
	\draw[fill, color=green] (5,2.75)--(5.3,2.75)--(5.3,3.05)--(5,3.05)--cycle; 
	\draw[thick] (5,2.25)--(5.3,2.25)--(5.3,3.05)--(5,3.05)--cycle;

	\draw[fill, color=red] (2.8,0.2)--(3.1,0.2)--(3.1,0.8)--(2.8,0.8)--cycle; 
	\draw[fill, color=green] (2.8,0.8)--(3.1,0.8)--(3.1,1)--(2.8,1)--cycle; 
	\draw[thick] (2.8,0.2)--(3.1,0.2)--(3.1,1)--(2.8,1)--cycle;

	\draw[fill, color=red] (3.0,1.5)--(3.3,1.5)--(3.3,1.9)--(3.0,1.9)--cycle; 
	\draw[fill, color=green] (3.0,1.9)--(3.3,1.9)--(3.3,2.3)--(3.0,2.3)--cycle; 
	\draw[thick] (3.0,1.5)--(3.3,1.5)--(3.3,2.3)--(3.0,2.3)--cycle; 

	\draw[fill, color=red] (3.0,3)--(3.3,3)--(3.3,3.2)--(3.0,3.2)--cycle; 
	\draw[fill, color=green] (3.0,3.2)--(3.3,3.2)--(3.3,3.8)--(3.0,3.8)--cycle; 
	\draw[thick] (3.0,3)--(3.3,3)--(3.3,3.8)--(3.0,3.8)--cycle; 


\fill[color=black] (2,6) circle(0.25);

	\draw[dashed] (2,6)--(1.15,2.8) ;
\draw[dashed] (2,6)--(5.15,3.05) ;
\draw[very thick] (2,6)--(2.8,0.4) ;
\draw[dashed] (2,6)--(3,1.7) ;
\draw[dashed] (2,6)--(3.15,3.8) ;


	\draw[->, >=latex, blue!20!white, line width = 10pt] (3.4,0.5) to[in =-125, out = 0] (8.2,2.2);

	\draw[thick] (10,0)--(11,0)--(11,5)--(10,5)--cycle;

	\fill[color = red!80!white] (10,0)--(11,0)--(11,3.5)--(10,3.5)--cycle;
	\fill[color = green!80!white] (10,3.5)--(11,3.5)--(11,5)--(10,5)--cycle;

\fill[color=black] (8.5,2.5) circle(0.25);

\draw[very thick] (8.5,2.5)--(10,2.3) ;

	\draw[very thick] (10,0)--(10.1,0);
	\draw[very thick] (10,1.0)--(10.1,1.0);
	\draw[very thick] (10,2.0)--(10.1,2.0);
	\draw[very thick] (10,3.0)--(10.1,3.0);
	\draw[very thick] (10,4.0)--(10.1,4.0);
	\draw[very thick] (10,5.0)--(10.1,5.0);

\end{tikzpicture}
\caption{A transmitter (black) chooses a relay location at a coarse scale according to a spatial preference function (left). At a fine scale the configuration of busy relays can be encoded in a $[0,1]$-valued threshold as in the setting of flat preference kernels (right).}
\label{prefFig}
\end{figure}
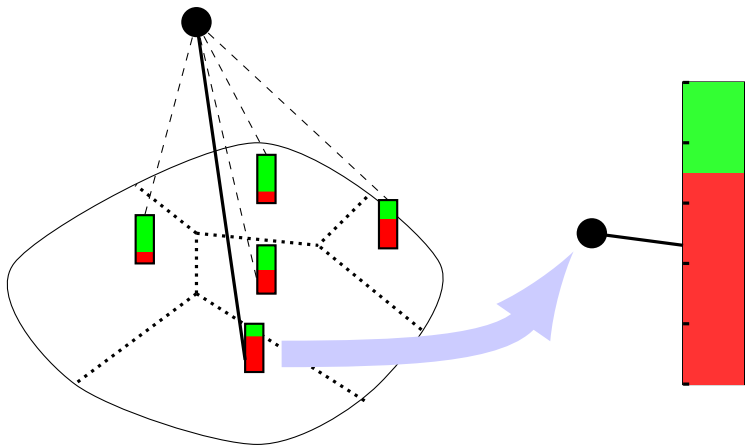

Here, we assume $\mu_{\ms R}$ to be absolutely continuous w.r.t.~the Lebesgue measure.
 The spatial preference function may be discontinuous and non-positive, but for sufficiently high densities from any transmitter location it should be possible to connect to some relay location.
More precisely, we assume that $\k$ is jointly continuous $\mu^{\ms s}_{\ms T}\otimes\mu_{\ms R}$-almost everywhere and 
for all $x\in W$ there exists $y\in W$ such that $\k(x,y)>0$, $y\in\supp(\muR)$ and $(x,y)$ is a continuity point of $\k$.
Recall that a transmitter at location $x\in W$ chooses a relay at location $\d y$ with probability 
\begin{align}\label{kappa}
\k_{l_\la}(\d y|x)=
\frac{\k(x,y)}{\int \k(x,z)l_\la(\d z)}l_\la(\d y)
\end{align}
where, by our assumption, for sufficiently large $\la$ the denominator is bounded away from 0 uniformly in $x$.

\medskip
Next we consider the interplay between the spatial location of the relays and how the spatial preferences of the transmitters evolve in time. Note that the process of transmitter requests to a relay at location $\d y$ is a Poisson point process $Z^\la$ on $V'=V\times W$ with intensity measure
\begin{align*}
	\mu(l_\la)(\d t, \d u,\d x,\d y) = \k_{l_\la}(\d y|x)\mu_{\ms T}(\d t, \d u, \d x).
\end{align*}

As in Theorem~\ref{LDP_NoSpatial} the LDP will be obtained by making use of the observation that the measure of frustrated transmitters $\G^\la$ can be interpreted as a functional of the empirical measure $\mathfrak \LLL_\la\in \MM'=\MM(V') $ associated to $Z^\la$.
In particular, in the large deviations it is possible for the process to distort the new a priori measure $\mu(\mu_{\ms R})$ into an absolutely-continuous transmitter-request distribution $\nn$. More precisely, we generalize~\eqref{DE_Cont} into the measure-valued time integral equation
\begin{align}\label{DE_Cont_Gen}
\bb_t(\d x,\d y)=\int_0^t\int_{\tfrac{\d\bb_{s-}(W,\cdot)}{\d\nu_{{\ms{R}}}}(y)}^1\nn(\d s,\d u,\d x,\d y)=\int_0^t\nn(\d s,[\tfrac{\d\bb_{s-}(W,\cdot)}{\d\nu_{{\ms{R}}}}(y),1],\d x, \d y)
\end{align}
where we allow the relay measure $\nu_{\ms R}$ also to be general in order to cover both, empirical measures as well as absolutely-continuous measures. For example,
\begin{enumerate}
\item if the driving measure is given by $(\LLL_\la ,l_\la)$, then~\eqref{DE_Cont_Gen} has a unique solution $\bb(\d x,\d y)$ and, for $\k\equiv 1$, $B^\la(\d x)=\bb(\d x,W)$ in distribution.
\item 
if $\nn=\mu(\muR)$, then a solution is given by
\begin{align*}
\bb_t(\d x,\d y)=(1-e^{-\mu^{\ms t}_{\ms{T}}([0,t])\int\k(y|z)\mu^{\ms s}_{\ms{T}}(\d z)})\frac{\k(y|x)\mu^{\ms s}_{\ms T}(\d x)}{\int\k(y|z)\mu^{\ms s}_{\ms{T}}(\d z)}\muR(\d y).
\end{align*}
\end{enumerate}
Note that, for general driving measures $(\nn,\nu_{\ms R})$, existence of solutions for equation~\eqref{DE_Cont_Gen} is unclear. However, existence of solutions can be established if we assume $\nn$ to be of the form
$$\nn(\d t,\d u,\d x,\d y)=\nn_y(\d t,\d u,\d x)\nu_{\ms R}(\d y)$$
where $(\nn_y)_{y\in W}$ is a transition kernel $W\to\MM$ such that every $\nn_y(\d t,\d u,\d x)$ is absolutely continuous w.r.t.~$\mu_{\ms{T}}$. 
Indeed, then a solution of \eqref{DE_Cont_Gen} is given by  
\begin{align}\label{MidSolution}
\bb(\d x,\d y)=\b(\nn_y)(\d x)\nu_{\ms R}(\d y)
\end{align}
and will be denoted by $\bb(\nn,\nu_{\ms R})$.

\medskip
If $\bb(\nn,\nu_{\ms R})$ is well-defined, we can compute the process of frustrated transmitters $\g{(\nn,\nu_{\ms R})}$ via the formula
\begin{align*}
\g_t{(\nn,\nu_{\ms R})}(\d x)=\nn([0,t],[0,1],\d x, W)-\bb_t(\nn,\nu_{\ms R})(\d x, W)
\end{align*}
and in particular $\G^\la=\g(\LLL_\la ,l_\la)$ equals \eqref{Busy_Process} in distribution. 
Now, we present the main result of this paper.

\begin{theorem}\label{LDP_Spatial}
The family of measure-valued processes $\G^\la$ satisfies the LDP in $\DD$ with good rate function given by $I(\g)=\inf_{\nn\in \MM':\, \g(\nn,\mu_{\ms R})=\g}h(\nn|\mu(\mu_{\ms R}))$.
\end{theorem}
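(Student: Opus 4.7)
The plan is to derive Theorem~\ref{LDP_Spatial} by enriching each transmitter's mark with the relay location it addresses, applying a Sanov-type LDP to the resulting marked Poisson point process $Z^\la$, and contracting through the frustrated-transmitter functional. In distribution, $\G^\la=\g(\LLL_\la, l_\la)$, where $\LLL_\la$ is the empirical measure of $Z^\la$ on $V'$ with intensity $\la\mu(l_\la)$. All randomness now sits in $\LLL_\la$ while $l_\la\to\mu_{\ms R}$ deterministically, so the plan is to convert a Sanov LDP for $\LLL_\la$ into an LDP for $\G^\la$ via the contraction principle.

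The first step is to show that $(\LLL_\la)_\la$ satisfies an LDP in $\MM'$ (equipped with the weak topology) with rate function $\nn\mapsto h(\nn|\mu(\mu_{\ms R}))$. Because the intensity $\mu(l_\la)$ varies with $\la$, the classical Sanov statement does not apply directly. However, the standing hypotheses on $\k$ (namely $\mu^{\ms s}_{\ms T}\otimes\mu_{\ms R}$-a.e.~joint continuity together with a uniform lower bound on the denominator in~\eqref{kappa} for large $\la$) imply $\mu(l_\la)\to\mu(\mu_{\ms R})$ in total variation. This convergence is strong enough to permit a tilting argument transporting the standard LDP for Poisson processes with intensity $\la\mu(\mu_{\ms R})$ to $\LLL_\la$, with the relative-entropy rate function unchanged.

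The second step is to apply the contraction principle through the map $\nn\mapsto\g(\nn,\mu_{\ms R})$. Whenever $h(\nn|\mu(\mu_{\ms R}))<\infty$ we have $\nn\ll\mu(\mu_{\ms R})$, and the product-like decomposition $\mu(\mu_{\ms R})(\d t,\d u,\d x,\d y)=\k_{\mu_{\ms R}}(\d y|x)\mu_{\ms T}(\d t,\d u,\d x)$ yields a disintegration $\nn(\d t,\d u,\d x,\d y)=\nn_y(\d t,\d u,\d x)\mu_{\ms R}(\d y)$ with every slice $\nn_y\ll\mu_{\ms T}$. Hence $\bb(\nn,\mu_{\ms R})$ is well-defined through~\eqref{MidSolution}, and so is $\g(\nn,\mu_{\ms R})$. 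Continuity of this map on sublevel sets of the entropy then needs to be verified; here I would exploit the slicewise representation $\bb(\nn,\mu_{\ms R})(\d x,\d y)=\b(\nn_y)(\d x)\mu_{\ms R}(\d y)$, combine the continuity of $\nu\mapsto\b(\nu)$ developed for Theorem~\ref{LDP_NoSpatial} with a uniform-integrability control over $y$ powered by the entropy bound, and thereby deduce continuity of $\nn\mapsto\bb(\nn,\mu_{\ms R})$. The passage from $\bb(\nn,\mu_{\ms R})$ to $\g(\nn,\mu_{\ms R})$ via~\eqref{frustUserDef} is then straightforward in the Skorohod topology on $\DD$.

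The main obstacle I expect is the continuity just sketched. The integral equation~\eqref{DE_Cont_Gen} becomes degenerate whenever the relay density saturates the geometric upper bound, and weak convergence $\nn^{(k)}\to\nn$ does not per se preserve the Radon--Nikodym derivatives that enter the integrand. To bypass this, one option is to strengthen the topology on $\MM'$ to a $\tau$-type topology compatible with the entropy, exploiting the exponential tightness supplied by Sanov's theorem. An alternative, suggested by the patch picture in Figure~\ref{prefFig}, is to establish exponential equivalence with a discretised model obtained by partitioning $W$ into patches of vanishing diameter, replacing $\k$ by a piecewise-constant approximation, and applying Theorem~\ref{LDP_NoSpatial} within each patch. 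The $\mu^{\ms s}_{\ms T}\otimes\mu_{\ms R}$-a.e.~continuity of $\k$ should then ensure that the patchwise rate functions $\Gamma$-converge to $\nn\mapsto h(\nn|\mu(\mu_{\ms R}))$ as the patch diameter tends to zero, closing the argument.
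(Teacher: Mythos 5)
The high-level skeleton you describe --- a Sanov-type LDP for the augmented empirical measure $\LLL_\la$ followed by a contraction through the frustration functional --- is indeed the paper's framework. However, there is a concrete gap in your first step that blocks the simpler route you propose as your main plan.

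You claim that $\mu(l_\la)\to\mu(\mu_{\ms R})$ in total variation, and that this licenses a tilting argument transporting the classical Poisson Sanov LDP to $\LLL_\la$. This is false for a structural reason: $l_\la=\la^{-1}\sum_j\de_{y_j}$ is purely atomic, whereas $\mu_{\ms R}$ is (by the standing assumption of this section) absolutely continuous w.r.t.~Lebesgue. Consequently the kernel $\k_{l_\la}(\d y|x)$ is concentrated on the finite set $Y^\la$ while $\k_{\mu_{\ms R}}(\d y|x)$ has a density in $y$, so $\mu(l_\la)$ and $\mu(\mu_{\ms R})$ are mutually singular in the relay coordinate and $\|\mu(l_\la)-\mu(\mu_{\ms R})\|_{\rm TV}=2\mu^{\ms s}_{\ms T}(W)\mu^{\ms t}_{\ms T}([0,\TF])$ for every $\la$. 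No tilting or absolute-continuity-based Sanov transfer can bridge singular measures, so Step~1 as written does not get off the ground. This is precisely why the paper cannot apply Sanov plus contraction directly: it must first replace the driving data by something spatially coarsened so that both the $\la$-dependent and the limiting descriptions live on the same patch-indexed state space. This is done via the augmented empirical measures $\LLL^\de_\la(\cdot)$ and the chain of exponential equivalences in Propositions~\ref{ExpEquiv_Spatial_0}, \ref{ExpEquiv_Spatial} and~\ref{ExpEquiv_Spatial_2}, powered by the coupling of Section~\ref{couplDiscSec}, the 2-Lipschitz bound of Lemma~\ref{contrLem}, and the intensity estimate of Lemma~\ref{intBoundLem} (Lebesgue density theorem). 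The paper also works in the $\tau$-topology rather than the weak topology, with continuity of the approximate maps justified via Proposition~\ref{DZApprox} and Lemma~\ref{uniformLem}, which is needed because, as you correctly observe, weak convergence does not control the Radon--Nikodym derivatives entering~\eqref{DE_Cont_Gen}.

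Your second "alternative" --- discretizing $W$ into patches, replacing $\k$ by a piecewise-constant kernel, applying Theorem~\ref{LDP_NoSpatial} within each patch, and arguing exponential equivalence as the patch diameter vanishes --- is, in outline, the paper's actual argument. But as sketched it is a plan rather than a proof: the two delicate points you would still need to supply are (i) a quantitative coupling so that the patch approximation error is super-exponentially small uniformly in $\la$ (not merely a $\Gamma$-convergence statement about rate functions), and (ii) a proof that the frustration map is continuous on entropy sublevel sets, which requires the slicewise Lipschitz estimate of Lemma~\ref{scaleBetLem} together with an integrated bound like the one used in Lemma~\ref{uniformLem}, not just uniform integrability. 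Without those, the exponential-approximation machinery of~\cite[Theorem 4.2.23]{dz98} cannot be invoked.
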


Note that at first sight, $\G^\la$ is an object that requires knowledge on how many transmitters choose a relay and not just the number of transmitters in a given area that choose relays in another given area. Therefore, it may come as a surprise that we can state Theorem~\ref{LDP_Spatial} as a measure-valued LDP and not as a LDP on the level of spatial configurations. To reconcile Theorem~\ref{LDP_Spatial} with the reader's intuition, we note that after approximation by flat preference functions, we deal with an independent collection of processes of the type considered in Theorem~\ref{LDP_NoSpatial}. This allows us to aggregate the information about an entire local configuration of occupied relays into a single number.

\medskip
The rest of the manuscript is organized as follows. Sections~\ref{thm1Sec} and~\ref{thm2Sec} provide high-level overviews for the proofs of Theorems~\ref{LDP_NoSpatial} and~\ref{LDP_Spatial}, respectively. Detailed proofs for all supporting results can be found in Sections~\ref{thm1SupSec} and~\ref{thm2SupSec}.

\section{Proof of Theorem \ref{LDP_NoSpatial}}
\label{thm1Sec}
The idea for the proof of Theorem~\ref{LDP_NoSpatial} is to use a Sanov-type result for the transmitter distribution $\nu$ and apply the contraction principle for solutions of the associated differential equation~\eqref{DE_Cont}. 
Then main ingredient in this approach is then the continuity of solutions at user distributions with finite entropy. 
The theory of ODE provides us with conditions under which continuity of solutions w.r.t.~parameters can be inferred. Unfortunately, these results mostly work under Lipschitz assumptions which are stronger than the finite entropy bounds provided in our setting. 
However, we construct a two-step Picard approximation that is tailor-made to provide the right balance between two opposing constraints: It is simple enough to be continuous with respect to the driving measure and at the same sufficiently close to the true solution to satisfy uniform approximation properties. This allows us to employ the LDP tool of exponential approximations \cite[Definition 4.2.14]{dz98} to complete the proof of Theorem~\ref{LDP_NoSpatial}.

\subsection{Markovian structure of $\G^\la$}
In this subsection we show that the solutions to equation \eqref{DE_Discrete}, modeled using the random variable $U_j$, is equal in distribution to the process of frustrated users as defined in \eqref{Busy_Process}.
To make this precise, let $\tilde \G^\la$ denote the unique solution of \eqref{DE_Discrete} for the initial condition $\tilde \G^\la_0=0$.
\begin{proposition}\label{Mark}
The processes $\G^\la$ and $\tilde \G^\la$ have the same distribution. 
\end{proposition}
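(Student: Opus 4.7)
The plan is to prove equality in distribution by showing that both $\G^\la$ and $\tilde \G^\la$ are pure jump processes on the same skeleton of transmitter arrivals, with identical transition kernels at each arrival. Order the arrival times as $T_{(1)} < T_{(2)} < \cdots$ and write $X_{(i)}$ for the associated transmitter positions. Both processes are piecewise constant between arrivals, so it suffices to match the joint laws of their states at the sequence $(T_{(i)})_i$, which I would do by induction on $i$.

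For the induction step, let $K = \la\, B^\la_{T_{(i)}-}(W)$ be the number of busy relays just before the $i$-th arrival. In the original process $\G^\la$ with $\k\equiv 1$, the new transmitter selects a relay uniformly among the $n_\la = \la r_\la$ candidates and becomes satisfied iff that relay is currently idle; the conditional probability is $(n_\la - K)/n_\la = 1 - B^\la_{T_{(i)}-}(W)/r_\la$, independent of $X_{(i)}$ and of the history given $K$. In $\tilde \G^\la$, the rule~\eqref{DE_Discrete} declares the new arrival satisfied precisely when $U_i \in [\tilde B^\la_{T_{(i)}-}(W)/r_\la, 1]$, an event of the same conditional probability. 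In both cases the satisfied measure increases by $\la^{-1}\de_{X_{(i)}}$ upon success and the frustrated measure increases by $\la^{-1}\de_{X_{(i)}}$ upon failure, so the one-step joint distributions of $(B^\la_{T_{(i)}}, \G^\la_{T_{(i)}})$ and $(\tilde B^\la_{T_{(i)}}, \tilde \G^\la_{T_{(i)}})$ agree and the induction closes.

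The one point that requires care is that the true state underlying $\G^\la$ contains the identities of the busy relays, not merely their count, whereas the threshold rule retains only the count. I would reconcile this by a short exchangeability observation: since each selection is uniform over $Y^\la$, the conditional law of the busy set given its cardinality $K$ is uniform over $K$-subsets of $Y^\la$, so integrating out this internal randomness yields a transition depending only on $K/n_\la = B^\la_{T_{(i)}-}(W)/r_\la$, which is exactly the quantity used by~\eqref{DE_Discrete}. This sufficiency reduction is the main (and quite mild) obstacle; the rest of the argument consists of the routine induction together with the fact that the jump-by-jump construction of $\tilde\G^\la$ driven by $L_\la$ yields a well-defined c\`adl\`ag element of $\DD$, from which equality in distribution of the two processes follows.
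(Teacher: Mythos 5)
Your proposal follows essentially the same route as the paper: condition on the transmitter locations and arrival times, observe that both $\G^\la$ and $\tilde\G^\la$ are then discrete-time chains indexed by arrivals, and match the one-step transition probabilities by noting that in each case the $i$-th transmitter becomes frustrated with conditional probability $K/n_\la$ where $K$ is the current number of busy relays. One minor remark: the exchangeability digression (that the busy set is uniform over $K$-subsets given its cardinality) is a true but slightly heavier way to justify the sufficiency of the count; the paper gets by with the more elementary observation that a uniform selection from $Y^\la$ lands in any prescribed subset of size $K$ with probability exactly $K/n_\la$, independently of which subset it is, so the identities of the busy relays never enter the transition law in the first place.
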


\begin{proof}
Let us fix a realization of $N^\la=|X^\la|$ many transmitter locations $(x_i)_{1\le i\le N^\la}$ together with their data transmission times $(t_i)_{1\le i\le N^\la}$. After that, the measure-valued process $\G^\la=(\G^\la(x_i))_{1\le i\le N^\la}$ is discrete-time and vector-valued. Note that, this process only depends on the random selection of a relay for each transmitter which is done with probability $1/|Y^\la|$. For the process $\tilde \G^\la$ the randomness comes from the uniform distribution of selection variables $U=(U_i)_{1\le i\le N^\la}$. 

For simplicity we assume the data transmission times to be ordered, then both $\G^\la=\{\G^\la_{t_i}\}$ and $\tilde \G^\la=\{\tilde \G^\la_{t_i}\}$ are time-discrete Markov chains the finite state space $\{0,\la^{-1}\}^{N^\la}$. Moreover, at a given time $t_i$ both, $\G^\la$ and $\tilde \G^\la$ can only change in coordinate $x_i$.
For the transition probability of $\G^\la$ we note that if at time $t_i-$, $k$ relays are busy, then at time $t_i$, $x_i$ becomes frustrated with probability $k/|Y^\la|$. Accordingly, with probability $1-k/|Y^\la|$ at time $t_i$, $x_i$ becomes satisfied and the process stays unchanged.
As for $\tilde \G^\la$ if at time $t_i-$, $k$ relays are busy, then $x_i$ becomes frustrated if $U_{i}\in[0, k/|Y^\la|]$. Hence the transition probabilities coincide.
\end{proof}

\subsection{Existence and uniqueness of solutions}\label{Existence and uniqueness of solutions}
For absolutely continuous measures $\MM_{\ms{ac}}=\{\nu\in\MM:\,  \nu\ll\mu_{\ms T}\}$ the existence of solutions to~\eqref{DE_Cont} is non-trivial and will be dealt with in this section. More precisely, let $\LL=\{f\in [0,1]^{[0,\TF]}:\, f \text{ increasing and }f(0)=0\}$, then for $\nu\in\MM_{\ms{ac}}$ we define the integral operator $\mc{T}_\nu:\LL\to\LL$ where
\begin{align}\label{IntOp}
	\TT_{\nu}: \b\mapsto \big(\int_0^t\nu(\d s,[\b_s,1],W)\big)_{t\in[0,\TF]}.
\end{align}
As will be shown below, $\TT_\nu$ is continuous so that existence of solutions can be established via the Schauder-Tychonoff fixed point theorem. The uniqueness is a consequence of a monotonicity property of $\TT_\nu$.
\begin{proposition}\label{ExistenceUniqueness}
	For all $\nu\in\MM_{\ms{ac}}$, there exists exactly one $\b\in\LL$ such that $\TT_{\nu}(\b)=\b$.
\end{proposition}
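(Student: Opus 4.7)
The plan is to treat $\TT_\nu$ as a continuous self-map on a compact convex subset of a Banach space, apply Schauder-Tychonoff for existence, and exploit the order-reversing nature of $\TT_\nu$ for uniqueness. I would equip $\LL$ with the $L^1([0,\TF],\mu^{\ms t}_{\ms T})$-topology. The set $\LL$ is convex; Helly's selection theorem applied to uniformly bounded monotone functions produces pointwise $\mu^{\ms t}_{\ms T}$-a.e.~convergent subsequences, which bounded convergence upgrades to $L^1$, yielding compactness. The map $\TT_\nu$ sends $\LL$ into itself: the integrand $\nu(\d s,[\b_s,1],W)$ is non-negative, so $t\mapsto\TT_\nu(\b)_t$ is increasing, vanishes at $t=0$, and stays bounded. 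Writing $f=\d\nu/\d\mu_{\ms T}$ and letting $\rho$ denote the Lebesgue density of $\mu^{\ms t}_{\ms T}$, the operator takes the form
\[
\TT_\nu(\b)_t \;=\; \int_0^t G(s,\b_s)\,\rho(s)\,\d s, \qquad G(s,v) \;=\; \int_v^1\int_W f(s,u,x)\,\mu^{\ms s}_{\ms T}(\d x)\,\d u,
\]
with envelope $G(s,0)\rho(s)$ Lebesgue-integrable. For continuity of $\TT_\nu$, given $\b_n\to\b$ in $L^1$ I would extract an a.e.~convergent subsequence; since $G(s,\cdot)$ is continuous (the $u$-marginal of $\mu_{\ms T}$ is atomless), dominated convergence yields $\TT_\nu(\b_n)_t\to\TT_\nu(\b)_t$ for every $t$, hence in $L^1$ by the subsequence principle. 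Schauder-Tychonoff then produces a fixed point.

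For uniqueness, note that any fixed point $\b$ is absolutely continuous and satisfies $\dot\b(s) = G(s,\b(s))\rho(s)$ almost everywhere; crucially $G(s,\cdot)$ is non-increasing, which is exactly the order-reversing property of $\TT_\nu$. Given two fixed points $\b,\b'$, set $h=\b\vee\b'$ and $l=\b\wedge\b'$; both are absolutely continuous with $h(0)=l(0)=0$. Using the classical fact that $\dot\b=\dot\b'$ almost everywhere on $\{\b=\b'\}$, one checks that $\dot h = G(\cdot,h)\rho$ and $\dot l = G(\cdot,l)\rho$ a.e., so $h$ and $l$ are themselves solutions of the fixed-point equation. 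Since $h\geq l$ pointwise and $G(s,\cdot)$ is non-increasing,
\[
h(t)-l(t) \;=\; \int_0^t\bigl[G(s,h(s)) - G(s,l(s))\bigr]\,\rho(s)\,\d s \;\leq\; 0,
\]
forcing $h\equiv l$ and hence $\b\equiv\b'$.

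The main delicate step is continuity of $\TT_\nu$: the threshold $\b_s$ enters as the lower endpoint of the integration interval $[\b_s,1]$, which would be discontinuous if $\nu$ carried mass on $\{u=\b_s\}$-slices. It is precisely the atomless uniform $u$-marginal of $\mu_{\ms T}$, inherited by any $\nu\ll\mu_{\ms T}$, that makes $G(s,\cdot)$ continuous; once this structural point is in place, both Schauder's argument and the monotone comparison for uniqueness unfold in standard fashion.
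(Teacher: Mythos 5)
Your proof is correct and follows the same overall strategy as the paper --- Schauder--Tychonoff for existence, monotone comparison for uniqueness --- but implements both halves differently. The paper works in the product topology on $[0,1]^{[0,\TF]}$, so compactness of $\LL$ is immediate from Tychonoff, and proves continuity of $\TT_\nu$ (Lemma~\ref{tContLem}) via a time-discretization estimate that bounds oscillations by $\nu$-mass on thin rectangles. You instead equip $\LL$ with the $L^1(\mu^{\ms t}_{\ms T})$-topology, invoke Helly's selection theorem for compactness, and prove continuity through the explicit disintegration $\TT_\nu(\b)_t=\int_0^t G(s,\b_s)\rho(s)\,\d s$ together with atomlessness of the $u$-marginal. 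This is the same structural ingredient the paper exploits through $\nu\ll\mu_{\ms T}$, but your $G$-representation makes the mechanism more transparent; the small price is that the pointwise constraint $f(0)=0$ is not meaningful in $L^1$, which you would patch by noting that any fixed point is automatically a continuous function vanishing at $0$. For uniqueness, the paper runs a direct contradiction: if $\b_{t_1}>\b'_{t_1}$, take the last coincidence time $t_0<t_1$, observe that $[\b_s,1]\subseteq[\b'_s,1]$ on $(t_0,t_1]$, and deduce $\b_{t_1}\le\b'_{t_1}$. You instead show that $\b\vee\b'$ and $\b\wedge\b'$ are themselves fixed points (invoking the a.e.-derivative-on-coincidence-sets fact for absolutely continuous functions) and use the order reversal of $G$ to collapse them; this is valid but slightly heavier than the paper's argument, which sidesteps the derivative machinery entirely.
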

Note that for the driving measure $\nu=\Lla$, existence and uniqueness of solutions for~\eqref{DE_Cont} is trivial. In both cases, where $\nu=\Lla$ or $\nu\in\MM_{\ms{ac}}$, using \eqref{DE_Cont_r}, we obtain a measure-valued solution which we denote $\b(\nu)$. This solution is a step-function with step-height $\la^{-1}$ if $\nu=\Lla$ and continuous if $\nu\in\MM_{\ms{ac}}$.

\subsection{Approximation scheme for the solution}
In this subsection, we consider time-discretized two-step Picard approximations which will turn out to be convergent in the supremum norm. 
Additionally, the resulting trajectories exhibit good continuity properties w.r.t.~driving measures. Let us start by providing precise definitions of the approximations.

\medskip
Let $\de>0$, $\nu\in\MM$ and consider the discretization of $[0,\TF]$ into $\TF/\de$ disjoint segments of length $\de$, where we assume $\TF/\de$ to be an integer.
To define the approximation, we proceed recursively and start by putting $\bdde_0(\nu)(\d x)=0$. Once $\bdde_{(n-1)\de}(\nu)(\d x)$ is available, we define the locally constant function
\begin{align*}
	\bddde_{t}(\nu)=\bdde_{(n-1)\de}(\nu)(W)+\nu\big(((n-1)\de,n\de]\times[0,1]\times W\big)
\end{align*}
for $t\in((n-1)\de,n\de]$.
Then,  for $t\in((n-1)\de,n\de]$ we put 
\begin{align*}
	\bdde_{t}(\nu)(\d x)=\bdde_{(n-1)\de}(\nu)(\d x)+\nu\big(((n-1)\de,t],[\bddde_{n\de}(\nu),1],\d x\big).
\end{align*}
We can think of $\bddde(\nu)$ as a one-step Picard iteration of the 
zero function. Similarly, $\bdde(\nu)$ corresponds to the two-step Picard iteration.
The approximating process of frustrated transmitters is now defined as
\begin{align*}
	\gdde_{t}(\nu)(\d x)=\nu([0,t],[0,1],\d x)-\bdde_{t}(\nu)(\d x).
\end{align*}

\medskip
Next we show that $\gdde(\nu)$ approximates $\g(\nu)$ sufficiently well to transfer the LDP from $\gdde(\nu)$ to $\g(\nu)$. More precisely, we want to apply the exponential approximation technique~\cite[Theorem 1.13]{eiSchm} and therefore have to show three conditions. First, we show an exponential approximation property on the Banach space of trajectories of finite signed measures equipped with the supremum norm $\Vert\cdot\Vert$ where the supremum is taken over all times and measurable sets. The statement of this auxiliary result makes use of the empirical measures $\Lla$ introduced below~\eqref{DE_Discrete}.
\begin{proposition}\label{ExpEquiv}
$\gdde(\Lla)$ is an $\Vert\cdot\Vert$-exponentially good approximation of $\g(\Lla)$.
\end{proposition}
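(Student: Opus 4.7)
Since by the very definitions of $\g$ and $\gdde$ one has $\g(\nu) - \gdde(\nu) = \bdde(\nu) - \b(\nu)$, it suffices to show that $\bdde(\Lla)$ is an $\|\cdot\|$-exponentially good approximation of $\b(\Lla)$. The plan is a pathwise, slot-wise error analysis driven by the same realization of $\Lla$, combined with large-deviations control on a suitable good event for $\Lla$.

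First, I would derive a slot-wise recursive bound. On each grid slot $I_n = ((n-1)\de, n\de]$, subtracting the integral equation~\eqref{DE_Cont} satisfied by $\b(\Lla)$ from the two-step Picard recursion defining $\bdde(\Lla)$ and using that the true solution $s \mapsto \b_s$ is monotone (Proposition~\ref{ExistenceUniqueness}) yields, for every measurable $A \subset W$,
\begin{align*}
\big|(\bdde_{n\de} - \b_{n\de})(A) - (\bdde_{(n-1)\de} - \b_{(n-1)\de})(A)\big| \le \Lla(I_n \times J_n \times W),
\end{align*}
where $J_n \subset [0,1]$ is an adapted interval whose length is at most $\omega_n + e_{n-1}$, with $\omega_n := \Lla(I_n \times [0,1] \times W)$ the slot mass and $e_{n-1} := \|\bdde_{(n-1)\de}(\Lla) - \b_{(n-1)\de}(\Lla)\|$. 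The point is that both integrands differ only on the thin $u$-slab where the true adaptive threshold $\b_{s-}$ and the frozen two-step threshold $\bddde_{n\de}$ disagree.

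Second, I would introduce a good event $G_{\la,\de}$ combining (a) a slot-mass bound $\omega_n \le K$ for all $n$, where $K$ is a small fixed constant, and (b) a Dvoretzky--Kiefer--Wolfowitz-type uniform control on the empirical distribution of the choice marks $(U_i)$ inside each slot, guaranteeing $\Lla(I_n \times J \times W) \le \omega_n \cdot (|J| + \eta)$ uniformly in intervals $J$, for some tolerance $\eta$. On $G_{\la,\de}$ the recursion collapses to
\begin{align*}
e_n \le e_{n-1}(1+\omega_n) + \omega_n(\omega_n + \eta),
\end{align*}
and discrete Gr\"onwall bounds $\sup_n e_n$ in terms of $\eta$ and the aggregated quadratic quantity $\sum_n \omega_n^2$, both of which can be made small as $\de \downarrow 0$.

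Third, I would control $\P(G_{\la,\de}^c)$ via Poisson Chernoff for the slot masses and via the DKW inequality applied conditionally on the Poisson slot counts for the empirical distribution of the $U_i$, summed over the $\mc O(1/\de)$ slots by a union bound. The decisive gain is that for a fixed constant threshold $K$ and $\de \downarrow 0$, the Poisson rate $\de \cdot g(K/(c\de))$ with $g(x)=x\log x - x+1$ diverges, so that $\la^{-1}\log \P(G_{\la,\de}^c) \le -c(\de)$ with $c(\de)\to\infty$ as $\de\downarrow 0$. Combining with the deterministic bound of the previous step yields, for $\de$ sufficiently small that this bound is below $\eps$, the inclusion $\{\|\bdde(\Lla) - \b(\Lla)\| > \eps\} \subset G_{\la,\de}^c$, giving the required $\limsup_{\de}\limsup_{\la} \la^{-1}\log\P = -\infty$.

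The main obstacle will be the joint calibration in the second and third steps: $K$ and $\eta$ must be tuned so that simultaneously the deterministic Gr\"onwall bound on $G_{\la,\de}$ collapses as $\de\downarrow 0$ and the large-deviations rate $c(\de)$ for $G_{\la,\de}^c$ diverges. The point of the two-step Picard construction is exactly to enable this: by freezing the threshold $\bddde_{n\de}$ over the whole slot, the disagreement between $\bdde$ and $\b$ is localized to a slab whose mass is quadratic in the slot characteristics rather than linear, so that typical slot fluctuations contribute to the error only at second order while atypical slots are suppressed at the enhanced Poisson rate.
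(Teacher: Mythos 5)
There is a genuine gap in steps two and three of your plan, located precisely at the DKW ingredient. You correctly identify that the slot-wise error recursion must capture a ``thin window'' gain: the disagreement contribution in slot $n$ is only $\Lla(I_n\times J_n\times W)$ where $|J_n|$ is already small. But the DKW inequality, applied conditionally on the Poisson count $N_n$ of points in slot $n$, gives $\P(\text{slot }n\text{ violation})\le 2\,\E[e^{-2N_n\eta^2}]$, and since $N_n$ is Poisson with mean of order $\la\de$, this averages to $\exp(-\la\,C\de(1-e^{-2\eta^2}))$. After the union bound over $\mathcal{O}(1/\de)$ slots, the resulting rate satisfies $\la^{-1}\log\P(\text{DKW part of }G^c_{\la,\de})\lesssim -C\de\,\eta^2$, which tends to \emph{zero}, not to $-\infty$, as $\de\downarrow 0$. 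You calibrated the slot-mass part of $G^c_{\la,\de}$ correctly — there the per-slot rate $\de\,g(K/(c\de))$ does diverge — but the DKW part does not, and so the inclusion $\{\Vert\bdde(\Lla)-\b(\Lla)\Vert>\e\}\subset G^c_{\la,\de}$ does not yield the required $\limsup_\de\limsup_\la\la^{-1}\log\P=-\infty$.

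The way to close this is not to control the empirical choice-mark distribution uniformly over \emph{all} intervals on each slot, but to exploit that the disagreement window is \emph{previsible}. The paper first establishes the deterministic inequality (cf.~\eqref{NormApprox_2}) $\Vert\bdde(\Lla)-\b(\Lla)\Vert\le\int_0^{\TF}\Lla(\d t,I^\de_{t-}(\Lla),W)$, which subsumes your recursion without any Gr\"onwall factor, and then shows, on the event $\sup_t|I^\de_t|\le\e'$, that the integrand counts only those points whose mark $U_i$ falls within $\e'$ of $B^\la_{t_{i-1}}(W)$. By the previsibility of $B^\la$ and the independence of $U_i$ from the past, this count is stochastically dominated by a Poisson random variable of parameter $2\e'\la\mu^{\ms s}_{\ms T}(W)$ — uniformly in $\de$ (Lemma~\ref{Previsible}). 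Its upper-tail exponent diverges as $\e'\downarrow 0$, and the small-window event $\{\sup_t|I^\de_t|\le\e'\}$ holds up to a super-exponentially small error via slot-mass Poisson concentration — exactly the part of your argument that does work. So your high-level decomposition (slot-wise error, good event, large-deviations control of its complement) and your intuition about the two-step Picard freeze localizing the disagreement to a thin $u$-slab are both on target; what is missing is the previsibility/Poisson-domination step replacing DKW.
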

Second, we show the uniform approximation property on measures with bounded entropy. 
\begin{proposition}\label{DZApprox}
For all $\a\ge0$,
$\limsup_{\de\downarrow 0}\sup_{\nu:\, h(\nu|\mu_{\ms T})\le\a} \Vert\gdde(\nu)-\g(\nu)\Vert=0$.
\end{proposition}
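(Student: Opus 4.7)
The plan is to reduce to the scalar statement $\sup_{\nu\,:\,h(\nu|\mu_{\ms T})\le\alpha}\|\beta(\nu)-\beta^\delta(\nu)\|\to 0$ as $\delta\downarrow 0$, since $\gamma^\delta(\nu)-\gamma(\nu)=\beta(\nu)-\beta^\delta(\nu)$ by definition, and both are obtained from $\nu$ by integration against indicators of $u$-thresholds, so the measure-valued statement follows by tracking the same thresholds against $\nu(\cdot,\cdot,\d x)$.

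The core per-interval estimate exploits the antitone structure of the Picard map $\mathcal T_\nu$ (a higher threshold admits fewer transmitters). On each interval $I_n=((n-1)\delta,n\delta]$, write $\hat\beta$ for the true local solution starting from $\beta^\delta_{(n-1)\delta}$. The two-step construction then yields the local bracket $\beta^\delta_t\le\hat\beta_t\le\beta^{\uparrow,\delta}_{n\delta}$ for $t\in I_n$: $\beta^{\uparrow,\delta}_{n\delta}$ is the Picard iterate obtained with the trivial threshold zero and hence upper bounds $\hat\beta$, while $\beta^\delta_t$ is the next iterate using the pessimistic threshold $\beta^{\uparrow,\delta}_{n\delta}$. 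Consequently $\hat\beta_{n\delta}-\beta^\delta_{n\delta}$ is bounded by the $\nu$-mass in a $u$-slab of width at most $M_n(\nu):=\nu(I_n\times[0,1]\times W)$.

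To propagate across intervals, I would use that $\beta_s-\hat\beta_s$ is non-increasing in $s\in I_n$ by antitonicity (the smaller trajectory grows faster), which gives $0\le\beta_{n\delta}-\hat\beta_{n\delta}\le E_{n-1}$ with $E_n:=\beta_{n\delta}-\beta^\delta_{n\delta}\ge 0$. Combining,
\[
E_n\;\le\;E_{n-1}+(\hat\beta_{n\delta}-\beta^\delta_{n\delta})\;\le\;E_{n-1}+\nu\bigl(I_n\times[\beta^\delta_{(n-1)\delta},\beta^{\uparrow,\delta}_{n\delta}]\times W\bigr),
\]
and a Fubini rearrangement together with the fact that the $u$-slabs $[\beta^\delta_{(n-1)\delta},\beta^{\uparrow,\delta}_{n\delta}]$ cover $[0,1]$ with bounded multiplicity (since $\beta^\delta$ is non-decreasing with increments $\le M_n(\nu)$) yields $\|\beta(\nu)-\beta^\delta(\nu)\|\le C\,\omega_\delta(\nu)$, where $\omega_\delta(\nu)$ is the modulus of $\nu$ on $u$-slabs of width $M_\delta(\nu):=\max_n M_n(\nu)$.

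Uniformity in $\nu$ is obtained from the classical entropy inequality
\[
\nu(A)\;\le\;c^{-1}\bigl[h(\nu|\mu_{\ms T})+\log(1+(e^c-1)\mu_{\ms T}(A))\bigr],\qquad c>0,
\]
applied twice: once to deduce $\sup_{h(\nu|\mu_{\ms T})\le\alpha}M_\delta(\nu)\to 0$ from $\sup_n\mu^{\ms t}_{\ms T}(I_n)\to 0$ (absolute continuity of $\mu^{\ms t}_{\ms T}$ w.r.t.\ Lebesgue), and once to bound $\omega_\delta(\nu)$ via $\mu_{\ms T}(I_n\times[a,a+M_\delta(\nu)]\times W)\to 0$, optimizing $c=c(\delta)\uparrow\infty$. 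The main obstacle is controlling the accumulation of per-interval errors without a $1/\delta$-blowup; this is where the two-step design of $\beta^\delta$ pays off, since the antitone contraction of the true flow telescopes the per-interval slippage into a single slab-modulus term rather than a sum of $O(1/\delta)$ independent contributions.
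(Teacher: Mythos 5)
Your proposal reaches the same endgame as the paper: the error $\|\b(\nu)-\bdde(\nu)\|$ is dominated by the $\nu$-mass of a ``tube'' around the approximate trajectory whose width is $O(\sup_n\nu(\D_\de(n)\times[0,1]\times W))$, and this vanishes uniformly on entropy level sets via the uniform absolute-continuity bound (Lemma~\ref{AbsoluteContinuity}, part 2). The route to the tube bound is organized differently: the paper first proves a clean \emph{pointwise} scalar bound $\|\bdde(\nu)(W)-\b(\nu)(W)\|\le 2\sup_n M_n(\nu)$ (Lemma~\ref{NormApprox}, a two-case induction with no summation), which gives the tube width directly, and then invokes the ready-made measure-valued estimate~\eqref{NormApprox_2}; you instead unroll a per-interval recursion $E_n\le E_{n-1}+\nu(I_n\times[\bdde_{(n-1)\de},\bddde_{n\de}]\times W)$. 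Both work, and the paper reuses Lemma~\ref{NormApprox} elsewhere, which is why it is factored out.

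One justification in your proposal is a slip, though the conclusion it supports is still correct. You argue that the sum $\sum_n\nu\bigl(I_n\times[\bdde_{(n-1)\de}(\nu),\bddde_{n\de}(\nu)]\times W\bigr)$ does not blow up because ``the $u$-slabs cover $[0,1]$ with bounded multiplicity.'' That multiplicity claim is in general false: when the threshold $\bddde$ reaches or exceeds $1$, the increments of $\bdde$ stall and arbitrarily many consecutive slabs share the same left endpoint. The real reason the sum is a single tube mass rather than an $O(1/\de)$-fold overcount is simply that the time intervals $I_n$ are disjoint, so the summands are $\nu$-measures of pairwise disjoint sets and the sum is exactly $\nu$ of their union, a tube of width $\sup_n M_n(\nu)$ around the curve $s\mapsto\bdde_s(\nu)(W)$. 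With that repaired, the $\mu_{\ms T}$-measure of the tube is at most $\mu^{\ms s}_{\ms T}(W)\,\mu^{\ms t}_{\ms T}([0,\TF])\cdot\sup_n M_n(\nu)$ by Fubini (the $u$-marginal of $\mu_{\ms T}$ is uniform), and the two applications of the entropy bound go through as you describe. Finally, the measure-valued statement does reduce to the scalar threshold analysis exactly as you assert, since for any Borel $A\subset W$ the error $|\b_t(\nu)(A)-\bdde_t(\nu)(A)|$ is controlled by $\int_0^{\TF}\nu(\d s,I^\de_{s-}(\nu),W)$ with the \emph{same} thresholds; this is precisely what~\eqref{NormApprox_2} records, so it would be worth making that step explicit rather than relegating it to a remark.
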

Third, we show continuity of the approximation w.r.t.~the driving measure in the $\tau$-topology, i.e., the topology generated by evaluations on bounded measurable functions.
\begin{proposition}\label{ContractionDiscr}
Let $t\in[0,\TF]$, $\de>0$ and $A\subset W$ measurable, then at any $\nu\in\MM_{\ms{ac}}$, the evaluations $\MM\to[0,\infty)$ given by $\nu\mapsto\gdde_t(\nu)(A)$ are continuous w.r.t.~the $\tau$-topology.
\end{proposition}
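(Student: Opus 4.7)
The plan is to induct on the discrete time index $n\in\{0,1,\ldots,\TF/\de\}$, proving that for every measurable $A\subseteq W$ the map $\mu\mapsto\bdde_{n\de}(\mu)(A)$ is $\tau$-continuous at $\nu\in\MM_{\ms{ac}}$. Given this, the recursion $\bdde_t(\mu)(A)=\bdde_{(n-1)\de}(\mu)(A)+\mu(((n-1)\de,t]\times[\bddde_{n\de}(\mu),1]\times A)$ for $t\in((n-1)\de,n\de]$ plus an analogous sandwich argument (see below) yields continuity of $\bdde_t$ at all intermediate times, and then $\gdde_t(\mu)(A)=\mu([0,t]\times[0,1]\times A)-\bdde_t(\mu)(A)$ is continuous because the first summand is an evaluation on a fixed Borel set. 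The base case $n=0$ is immediate since $\bdde_0\equiv 0$.

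For the inductive step, fix $n\ge 1$ and suppose $\mu\mapsto\bdde_{(n-1)\de}(\mu)(C)$ is $\tau$-continuous at $\nu$ for every measurable $C\subseteq W$. Specializing to $C=W$ gives continuity of $\mu\mapsto\bdde_{(n-1)\de}(\mu)(W)$, and since $\mu\mapsto\mu(((n-1)\de,n\de]\times[0,1]\times W)$ is an evaluation on a fixed set, the scalar threshold $\mu\mapsto\bddde_{n\de}(\mu)$ is continuous at $\nu$. Hence for any $\tau$-convergent sequence $\nu_k\to\nu$ in $\MM$ the thresholds $a_k:=\bddde_{n\de}(\nu_k)$ converge to $a:=\bddde_{n\de}(\nu)$.

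The heart of the argument is to show $\nu_k(B_k)\to\nu(B)$ for $B_k:=((n-1)\de,t]\times[a_k,1]\times A$ and $B:=((n-1)\de,t]\times[a,1]\times A$, which is precisely the step where one cannot directly invoke $\tau$-convergence since $B_k$ depends on $\nu_k$. I would bypass this with a sandwich: for $\eps>0$ and all $k$ large enough, $|a_k-a|<\eps$, which forces
\[
B^-_\eps:=((n-1)\de,t]\times[a+\eps,1]\times A\;\subseteq\;B_k\;\subseteq\;((n-1)\de,t]\times[a-\eps,1]\times A=:B^+_\eps.
\]
Since $B^\pm_\eps$ are fixed sets, $\tau$-convergence delivers $\nu_k(B^\pm_\eps)\to\nu(B^\pm_\eps)$, so that $\nu(B^-_\eps)\le\liminf_k\nu_k(B_k)\le\limsup_k\nu_k(B_k)\le\nu(B^+_\eps)$. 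Letting $\eps\downarrow 0$, monotone continuity of the finite measure $\nu$ gives $\nu(B^+_\eps)\downarrow\nu(B)$ and $\nu(B^-_\eps)\uparrow\nu(((n-1)\de,t]\times(a,1]\times A)$; the gap equals $\nu(((n-1)\de,t]\times\{a\}\times A)$, which vanishes because $\nu\ll\mu_{\ms T}=\mu^{\ms t}_{\ms T}\otimes U\otimes\mu^{\ms s}_{\ms T}$ and the uniform factor $U$ has no atoms. This is the only place where absolute continuity at the base point is invoked. Combining with the inductive hypothesis for the additive term $\bdde_{(n-1)\de}(\nu_k)(A)$ closes the induction.

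The main obstacle is precisely this $\nu_k$-dependence of the set $B_k$: $\tau$-convergence is only powerful enough to test against fixed sets, and the sandwich is the mechanism that converts the moving-set problem into two fixed-set problems. The collapse of the sandwich to a single value hinges on the no-atom property of the uniform coordinate, which is the structural reason we restrict continuity to absolutely continuous base points. Everything else reduces to bookkeeping: propagating continuity of $\bdde_{(n-1)\de}(\mu)(A)$ across the recursion and subtracting from the continuous evaluation $\mu([0,t]\times[0,1]\times A)$ to produce $\gdde_t(\mu)(A)$.
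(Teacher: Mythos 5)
Your argument is correct in substance and reaches the result, but it organizes the proof somewhat differently from the paper, and the differences are worth noting. The paper telescopes $|\bdde_t(\nu')(A)-\bdde_t(\nu)(A)|$ over all time steps at once, bounds each summand by a symmetric-difference-of-intervals estimate plus a fixed-set error, and then needs two further rounds of $\tau$-closeness before invoking Lemma~\ref{AbsoluteContinuity} part~1. Your explicit induction on $n$ makes the key dependency transparent: continuity of the threshold $\bddde_{n\de}$ at $\nu$ is inherited from the previous step's $\bdde_{(n-1)\de}(\cdot)(W)$ together with a fixed-set evaluation. Your sandwich $B^-_\eps\subseteq B'\subseteq B^+_\eps$ is also a tidy replacement for the symmetric-difference bound: it converts the moving selection interval into two fixed $\tau$-testable sets and identifies the residual gap as the single slice $\{u=\bddde_{n\de}(\nu)\}$, which is $\nu$-null because $\mu_{\ms T}$ carries the atomless uniform factor $U$ in its product form --- the same structural fact the paper exploits through absolute continuity. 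Both routes yield the same conclusion; yours makes the role of the recursion and of the non-atomicity explicit.

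One point to tighten: the $\tau$-topology is not first-countable, so convergence along sequences $\nu_k\to\nu$ does not, on its own, establish continuity of the map. Your sandwich transcribes directly into the required neighborhood form: given $\eta>0$, choose $\eps'>0$ with $\nu(B^+_{\eps'})<\nu(B)+\eta$ and $\nu(B^-_{\eps'})>\nu(B)-\eta$ (possible since $\nu(((n-1)\de,t]\times\{\bddde_{n\de}(\nu)\}\times A)=0$), and take the $\tau$-neighborhood of $\nu$ on which both $|\bddde_{n\de}(\nu')-\bddde_{n\de}(\nu)|<\eps'$ and $|\nu'(B^{\pm}_{\eps'})-\nu(B^{\pm}_{\eps'})|<\eta$. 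For any $\nu'$ in that neighborhood one obtains $|\nu'(B')-\nu(B)|<2\eta$, which is exactly the ``for $\nu'$ sufficiently close to $\nu$'' formulation used throughout the paper's proof.
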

In particular, the Propositions~\ref{DZApprox} and \ref{ContractionDiscr} imply that for all $\a>0$, on $\nu:\, h(\nu|\mu_{\ms T})\le\a$, the map $\nu\mapsto \g(\nu)$ is continuous in the $\tau$-topology.

\medskip
Feeding the exponential approximation machinery with the Sanov-type LDP, we obtain the following multivariate LDP. 

\begin{proposition}\label{ProjLimLDP_1}
Let $0\le t_1\le\cdots\le t_k\le\TF$, then the family of random measures $\{\G_{t_i}^\la(\d x)\}_{i}$ satisfies the LDP in the $\tau$-topology with good rate function $I((\g_{t_i})_{i})=\inf_{\nu\in \MM:\, (\g_{t_i}(\nu))_{i}=(\g_{t_i})_{i}}h(\nu|\mu_{\ms T})$.
\end{proposition}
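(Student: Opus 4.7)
The plan is to combine a Sanov-type LDP for the empirical measures $\Lla$ with the contraction principle applied to the discretized maps $\gdde$, and then transfer the result to $\g$ via the exponential approximation machinery. Since by Proposition~\ref{Mark} we have $\G^\la = \g(\Lla)$ in distribution, it suffices to prove the LDP for $(\g_{t_i}(\Lla))_i$.

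First I would invoke the classical Sanov theorem for empirical measures of Poisson point processes: $\Lla$ satisfies the LDP in $\MM$ equipped with the $\tau$-topology with good rate function $h(\cdot|\mu_{\ms T})$. Next, for fixed $\de>0$, I would apply the contraction principle to the map $\nu\mapsto(\gdde_{t_i}(\nu))_i$. Proposition~\ref{ContractionDiscr} gives continuity of each evaluation $\nu\mapsto\gdde_{t_i}(\nu)(A)$ in the $\tau$-topology at points of $\MM_{\ms{ac}}$; since $h(\nu|\mu_{\ms T})<\infty$ forces $\nu\in\MM_{\ms{ac}}$, continuity holds on the effective domain of the rate function, which is enough for the contraction principle. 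For a finite collection of bounded measurable test functions one obtains a finite-dimensional LDP, and a Dawson-Gärtner type projective-limit argument upgrades this to an LDP in the product $\tau$-topology for $(\gdde_{t_i}(\Lla))_i$ with rate
\[
I^\de((\g^\de_{t_i})_i)=\inf\bigl\{h(\nu|\mu_{\ms T}):\,(\gdde_{t_i}(\nu))_i=(\g^\de_{t_i})_i\bigr\}.
\]

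Then I would run the exponential approximation theorem from \cite[Theorem~4.2.16]{dz98}. The $\|\cdot\|$-exponential closeness provided by Proposition~\ref{ExpEquiv} implies that, for every measurable $A\subset W$,
\[
\P\bigl(|\gdde_{t_i}(\Lla)(A)-\g_{t_i}(\Lla)(A)|>\e\bigr)
\]
decays superexponentially in $\la$ as $\la\uparrow\infty$ followed by $\de\downarrow 0$, so the finite-dimensional projections are exponentially close in the $\tau$-topology. Meanwhile, Proposition~\ref{DZApprox} ensures that on any level set $\{h(\cdot|\mu_{\ms T})\le\a\}$ one has $\gdde_{t_i}(\nu)\to\g_{t_i}(\nu)$ uniformly, from which the usual argument yields $\limsup_{\de\downarrow 0}I^\de\ge I$ (lower bound) while any candidate $\nu$ with $h(\nu|\mu_{\ms T})<\infty$ produces an admissible $\gdde_{t_i}(\nu)\to\g_{t_i}(\nu)$ and hence the matching upper bound $\limsup_{\de\downarrow 0}\inf_{y\in U}I^\de(y)\le\inf_{y\in U}I(y)$ for open neighbourhoods $U$. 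Combining these ingredients gives the LDP for $(\g_{t_i}(\Lla))_i$ in the $\tau$-topology with the stated good rate function.

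The main obstacle is that the $\tau$-topology is not metrizable, so care is required when citing the exponential approximation theorem: one must verify its hypotheses directly in terms of the subbasic neighbourhoods $\{\nu:|\nu(f)-\nu_0(f)|<\e\}$ generated by bounded measurable $f$. The saving grace is that Propositions~\ref{ExpEquiv}-\ref{ContractionDiscr} are already tailored to this topology --- the supremum norm used in Proposition~\ref{ExpEquiv} controls every such test evaluation simultaneously --- and the effective domain of the rate function is contained in $\MM_{\ms{ac}}$, precisely where Proposition~\ref{ContractionDiscr} grants continuity. A secondary care point is the goodness of the rate function: this follows from the goodness of $h(\cdot|\mu_{\ms T})$ together with the uniform approximation of Proposition~\ref{DZApprox}, which ensures that level sets of $I$ are images of compact level sets of $h$ under a map that is continuous there.
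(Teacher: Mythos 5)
The skeleton of your argument is right, and the ingredients you assemble (Sanov for Poisson, the discretized solver $\gdde$, Propositions~\ref{ExpEquiv}--\ref{ContractionDiscr}, exponential approximation) are exactly the ones the paper uses. But there is one genuine gap right at the start.

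You write ``by Proposition~\ref{Mark} we have $\G^\la = \g(\Lla)$ in distribution.'' That is not what Proposition~\ref{Mark} says, and it is not true. Proposition~\ref{Mark} identifies $\G^\la$ in distribution with the solution of the \emph{discrete} equation~\eqref{DE_Discrete}, which involves the normalized relay count $r_\la=|Y^\la|/\la$. In the notation the paper introduces just before Lemma~\ref{relayDomLem}, this is $\G^\la=\g(\Lla,r_\la)$; whereas $\g(\Lla)$ is shorthand for $\g(\Lla,1)$. Since $r_\la$ only converges to $1$ (from $l_\la\Rightarrow\muR$ and $\muR(W)=1$) but need not equal $1$ at any finite $\la$, the map you are applying to $\Lla$ still depends on $\la$. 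The contraction principle and its approximate versions require a $\la$-independent map, so you cannot proceed directly from ``Sanov for $\Lla$'' to the LDP for $\G^\la$. This is exactly what the paper's Lemma~\ref{relayDomLem} and Proposition~\ref{RelayAppr} are there to fix: they give a deterministic sandwich for $B^{\la,s}$ versus $B^{\la,r}$ and then an $\Vert\cdot\Vert$-exponential equivalence between $\G^\la=\g(\Lla,r_\la)$ and $\g(\Lla,1)$. Your proof needs that step (or something equivalent) inserted before the exponential approximation argument; without it the chain breaks at the very first link.

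A secondary point: you cite \cite[Theorem~4.2.16]{dz98} for the exponential approximation step and correctly flag that the $\tau$-topology is not metrizable, but then leave that caveat hanging. The paper sidesteps this by invoking a $\tau$-topology version of the machinery, namely \cite[Theorem~1.13]{eiSchm}, rather than the metric-space statement from Dembo--Zeitouni. Your observation that the $\Vert\cdot\Vert$-bound from Proposition~\ref{ExpEquiv} controls all test evaluations simultaneously is a good one and is essentially why the $\tau$-version applies, but you should either cite a result that actually covers the $\tau$-topology or spell out the verification you allude to. Finally, the remark that the rate function is good ``because level sets of $I$ are images of compact level sets of $h$'' is loose: goodness comes packaged with the exponential approximation theorem once Propositions~\ref{DZApprox} and~\ref{ContractionDiscr} are in place, not from a direct image argument.
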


\subsection{Topological lifting and proof of Theorem~\ref{LDP_NoSpatial}}
In this section we first arrive at the continuous-path LDP using the Dawson-G\"artner theorem \cite[Theorem 4.6.1]{dz98}. We work in the product topology in $\MM(W)^{\itf}$, i.e., the coarsest topology such that the evaluations $\g\mapsto\g_t$ are continuous in the $\tau$-topology. The topological lifting is then done using exponential tightness.

\begin{proposition}\label{ProjLimLDP_2}
The family of measure-valued processes $\G^\la$ satisfies the LDP in the product topology. The good rate function is given by $I(\g)=\inf_{\nu \in \MM:\, \g(\nu)=\g}h(\nu|\mu_{\ms T})$.
\end{proposition}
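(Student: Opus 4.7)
The plan is to apply the Dawson--G\"artner projective limit theorem \cite[Theorem 4.6.1]{dz98} to the finite-dimensional LDPs established in Proposition~\ref{ProjLimLDP_1}. By construction, the product topology on $\MM(W)^{\itf}$ is the projective limit of the $\tau$-topologies under the evaluation maps $\g \mapsto (\g_{t_1},\ldots,\g_{t_k})$ indexed by finite subsets $J = \{t_1,\ldots,t_k\} \subset \itf$, so Dawson--G\"artner immediately yields an LDP in the product topology with good rate function
$$
I_{\mathrm{pr}}(\g) \;=\; \sup_{J} \; \inf\bigl\{\, h(\nu | \mu_{\ms T}) :\; \nu \in \MM,\; \g_t(\nu) = \g_t\ \text{for all}\ t \in J \,\bigr\},
$$
where $J$ ranges over all finite subsets of $\itf$.

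The remaining task is to identify $I_{\mathrm{pr}}$ with the claimed $I(\g) = \inf_{\nu:\, \g(\nu) = \g} h(\nu | \mu_{\ms T})$. The inequality $I_{\mathrm{pr}} \le I$ is immediate since any $\nu$ with $\g(\nu)=\g$ satisfies every finite-dimensional constraint. For the reverse inequality, I would fix $\g$ with $I_{\mathrm{pr}}(\g) \le M < \infty$, choose a countable dense $Q \subset \itf$, exhaust it by an increasing sequence $J_1 \subset J_2 \subset \cdots$ of finite subsets with $\bigcup_n J_n = Q$, and select near-minimizers $\nu_n \in \MM$ that match $\g$ on $J_n$ and satisfy $h(\nu_n | \mu_{\ms T}) \le M + 1/n$. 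Goodness of the relative entropy provides $\tau$-compactness of its sublevel sets, so a subsequential $\tau$-limit $\nu^*$ exists, and lower semi-continuity of $h(\cdot|\mu_{\ms T})$ yields $h(\nu^* | \mu_{\ms T}) \le M$.

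To upgrade matching on $Q$ to matching on all of $\itf$, I would invoke the $\tau$-continuity of $\nu \mapsto \g_t(\nu)$ on entropy-bounded sets (the combined consequence of Propositions~\ref{DZApprox} and~\ref{ContractionDiscr}) to deduce $\g_t(\nu^*) = \g_t$ for every $t \in Q$. Since $\nu^* \in \MM_{\ms{ac}}$ and $\mu_{\ms T}$ is atomless in time, the path $t \mapsto \g_t(\nu^*)$ is continuous; combined with the c\`adl\`ag regularity inherited by any $\g$ in the effective domain of $I_{\mathrm{pr}}$, equality on the dense set $Q$ extends to all of $\itf$. This produces $\g(\nu^*) = \g$ and hence $I(\g) \le M$, completing the identification.

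The delicate point in this plan is the extension from a dense set of times to all of $\itf$: $\tau$-convergence of finitely many marginals is insufficient on its own, and one must bootstrap the regularity of the limit path. This requires a careful interaction between the absolute continuity $\nu^* \ll \mu_{\ms T}$, the structural continuity of $\b(\cdot)$ on $\MM_{\ms{ac}}$ established in Section~\ref{Existence and uniqueness of solutions}, and the a priori regularity of any candidate $\g$ with finite $I_{\mathrm{pr}}$.
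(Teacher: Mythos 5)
Your overall strategy matches the paper's: apply Dawson--G\"artner to get the projective-limit rate function $I_{\mathrm{pr}}=\sup_{J}\inf\{h(\nu|\mu_{\ms T}): \g_t(\nu)=\g_t\ \forall t\in J\}$, observe $I_{\mathrm{pr}}\le I$ trivially, and for the reverse direction take near-minimizers along refining time grids, extract a $\tau$-accumulation point $\nu^*$ via compactness of entropy sublevel sets, and pass to the limit using lower semicontinuity of $h$ and $\tau$-continuity of $\nu\mapsto\g_t(\nu)$ on entropy-bounded sets. The paper uses a fresh partition ${\bf t}_\de$ of shrinking mesh for each $\de$, while you use a fixed nested exhaustion of a countable dense $Q$; both are workable for the dense-set matching step.

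However, there is a genuine gap in the step you yourself flag as ``delicate.'' You write that ``equality on the dense set $Q$ extends to all of $\itf$'' by combining the continuity of $t\mapsto\g_t(\nu^*)$ with ``the c\`adl\`ag regularity inherited by any $\g$ in the effective domain of $I_{\mathrm{pr}}$,'' but this regularity is not inherited from anything --- it is precisely what has to be proved. The Dawson--G\"artner LDP here takes place in $\MM(W)^{\itf}$ with the product topology, where a generic element $\g$ is an arbitrary function of time with no regularity at all; finiteness of $I_{\mathrm{pr}}(\g)$ does not a priori impose right-continuity, let alone continuity. Without that, agreement on a dense set $Q$ with the continuous path $\g(\nu^*)$ says nothing about $\g_t$ for $t\notin Q$. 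The paper closes exactly this hole \emph{before} invoking compactness: it shows directly that if $\g$ has a discontinuity (a set $A$ and times $t_n\to t_d$ with $\g_{t_n}(A)\not\to\g_{t_d}(A)$), then $\tilde I(\g)=\infty$, by choosing partitions $\{0<t_n<t_d<\TF\}$, noting that any near-minimizer $\nu_n$ must put mass at least $\e/2$ on the shrinking slab $A_n=(t_n,t_d]\times[0,1]\times A$ with $\mu_{\ms T}(A_n)\to 0$, and applying Lemma~\ref{AbsoluteContinuity} part~2 to force the entropy to blow up. Only \emph{after} restricting to continuous $\g$ with $\tilde I(\g)<\infty$ does the extraction-of-an-accumulation-point argument go through. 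Your proposal is correct in outline once this continuity-of-$\g$ step is supplied, but as written it assumes the conclusion of the step it needs.
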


\begin{proof}[Proof of Theorem~\ref{LDP_NoSpatial}]
First note that Proposition~\ref{ProjLimLDP_2}, by contraction, implies the same LDP where the $\tau$-topology is replaced by the weak topology. Then, we may apply~\cite[Corollary 4.2.6]{dz98} to reduce the proof of Theorem~\ref{LDP_NoSpatial} to the exponential tightness of $\G^\la$ in the Skorohod topology. For this we use a criterion from \cite[Theorem 4.1]{FeKu06} and verify its conditions:
	\begin{enumerate}
		\item $\G^\la_t$ is exponentially tight in $\MM(W)$ for all $t\in[0,\TF]$ and
		\item $\limsup_{\de\downarrow0}\limsup_{\la\uparrow\infty}\la^{-1}\log\P(w'_\de(\G^\la)>\e) = -\infty$.
	\end{enumerate}
Here the modulus of continuity $w'$ is defined as
		$$w'_\de(\G^\la)=\inf_{0=t_0<\cdots<t_{k}=\TF:\, \min_{1\le i\le k}|t_{i-1}-t_{i}|>\de}\max_{1\le i\le k}\sup_{s,t\in[t_{i-1},t_{i})}d_{\rm w}(\G^\la_s,\G^\la_t)$$
and with $F^\e$ denoting the $\e$-halo of a closed set $F\subset W$, 
		$$d_{\rm w}(\nu,\nu')=\inf\{\e>0:\, \nu(F)\le\nu'(F^\e)+\e\text{ and } \nu'(F)\le\nu(F^\e)+\e\text{ for all closed }F\subset W\}$$ is the Prokhorov metric on $\MM(W)$ which makes $(\MM(W),d_{\rm w})$ a Polish space, see \cite[Proposition A2.5.III.]{daleyPPII2008}.  
	
\medskip
As for (1) note that $K_\a=\{\nu\in\MM(W):\, \nu(W)\le\a\}$ is compact in the weak topology for any $\a>0$. Using the Poisson concentration inequality \cite[Chapter 2.2]{lugosi},
\begin{align*}
\P(\G^\la_t\in K_\a^c)=\P(\G^\la_t(W)>\a)\le\P(\Lla(V)>\a)=\P(|X^\la|>\a\la)
\le\exp(-\la h(\a|\mu_{\ms T}(V)),
\end{align*}
where $h(x|y)=x\log(x/y)-x+y$. This shows the exponential tightness at every $t$. 

\medskip
For the exponential bound on the modulus of continuity (2) first note that $d_{\rm w}(\G^\la_s,\G^\la_t)\le \sup_{1\le i\le \TF/\de}\Lla([(i-1)\de,i\de]\times[0,1]\times W)=\r$. Indeed, for all closed $F\subset W$ and $i\de\le s\le t\le(i+1)\de$, it suffices to show that
\begin{align}\label{Ineq1}
\G^\la_t(F)\le\G^\la_s(F^\r)+\r
\end{align}
since the other inequality $\G^\la_s(F)\le\G^\la_t(F^\r)+\r$ is trivially satisfied for all $\r>0$.
We can rewrite \eqref{Ineq1} equivalently as, 
\begin{align*}
&\Lla([s,t]\times[0,1]\times F)\le [B^\la_t(F)-B^\la_s(F)]+[\Lla([0,s]\times[0,1]\times F^\r\sm F)-B^\la_s(F^\r\sm F)]+\r
\end{align*}
where the first two summands on the r.h.s.~are nonnegative. By our definition of $\r$, we arrive at the desired bound. Consequently, using the Poisson concentration inequality again,
\begin{align*}
\limsup_{\la\uparrow\infty}\la^{-1}\log\P(w'_\de(\G^\la)>\e)&\le\limsup_{\la\uparrow\infty}\la^{-1}\log\P(\sup_{1\le i\le \TF/\de}\Lla([(i-1)\de,i\de]\times[0,1]\times W)>\e)\cr
&\le\limsup_{\la\uparrow\infty}\la^{-1}\log\sum_{1\le i\le \TF/\de}\P(\Lla([(i-1)\de,i\de]\times[0,1]\times W)>\e)\cr
&\le\max_{1\le i\le \TF/\de}-h(\e|\mu^{\ms s}_{\ms T}(W)\mu^{\ms t}_{\ms T}([(i-1)\de,i\de]))\cr
&=-h(\e|\mu^{\ms s}_{\ms T}(W)\max_{1\le i\le \TF/\de}\mu^{\ms t}_{\ms T}([(i-1)\de,i\de])).
\end{align*}
Since $\mu^{\ms t}_{\ms T}$ is assumed to be absolutely-continuous w.r.t.~the Lebesgue measure, this tends to minus infinity as $\de$ tends to zero, as required.
\end{proof}

%
\section{Proofs of supporting results for Theorem~\ref{LDP_NoSpatial}}
\label{thm1SupSec}
In this section, we provide proofs for the Propositions~\ref{Mark}-\ref{ProjLimLDP_2}. To ease notation we will write in the following $\D_\de(i)=((i-1)\de,i\de]$. Let us start by stating three results  that we will use multiple times in the sequel. 

\begin{lemma}\label{AbsoluteContinuity}
Let $\mathcal B(W)=\{A\subset W: \, A \text{Borel measurable}\}$ then the following holds.
\begin{enumerate}
\item	Let $\nu\in\MM_{\ms{ac}}$, then $$\lim_{\e\downarrow0}\sup_{A\subset \mathcal B(W):\, \mu_{\ms T}(A)<\e}\nu(A)=0.$$
\item Let $\a>0$, then $$\lim_{\e\downarrow0}\sup_{\substack{A\subset \mathcal B(W):\, \mu_{\ms T}(A)<\e \\ \nu\in\MM:\, h(\nu|\mu_{\ms T})<\a}}\nu(A)=0.$$
\item Let $\de>0$ then, for a random variable $N^{\e\la}$ which is Poisson distributed with parameter $\e\la$
$$\lim_{\e\downarrow0}\limsup_{\la\uparrow\infty}\la^{-1}\log\P(N^{\e\la}>\la\de)=-\infty.$$
\end{enumerate}
\end{lemma}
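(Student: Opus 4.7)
For part (1), I plan to use the classical uniform-integrability argument. Writing $f = \d\nu/\d\mu_{\ms T}$, which lies in $L^1(\mu_{\ms T})$ since $\nu$ is finite, I will split for any threshold $M > 0$ and measurable $A$,
$$\nu(A) \le \int_A f\,\one\{f \le M\}\,\d\mu_{\ms T} + \int f\,\one\{f > M\}\,\d\mu_{\ms T} \le M\mu_{\ms T}(A) + \int f\,\one\{f > M\}\,\d\mu_{\ms T}.$$
Dominated convergence makes the last term vanish as $M \uparrow \infty$, so given $\eta > 0$ I pick $M$ large enough that it is at most $\eta/2$ and then take $\e < \eta/(2M)$.

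For part (2), the same splitting works, but the tail now has to be bounded uniformly over $\{\nu : h(\nu|\mu_{\ms T}) \le \a\}$. The elementary inequality $f \le f\log f/\log M$, valid on $\{f > M\}$ for $M > 1$, yields
$$\int f\,\one\{f > M\}\,\d\mu_{\ms T} \le \frac{1}{\log M}\Bigl(\int f\log f\,\d\mu_{\ms T} + \tfrac{\mu_{\ms T}(V)}{e}\Bigr),$$
where I use $f\log f \ge -1/e$ to extend the integration on the right to all of $V$. The remaining integral equals $h(\nu|\mu_{\ms T}) + \nu(V) - \mu_{\ms T}(V)$, and Jensen's inequality applied to the convex map $x \mapsto x\log x$ gives $h(\nu|\mu_{\ms T}) \ge \mu_{\ms T}(V)\,\varphi(\nu(V)/\mu_{\ms T}(V))$ with $\varphi(x) = x\log x - x + 1 \ge 0$, so that $\nu(V) \le C(\a)$ as soon as $h(\nu|\mu_{\ms T}) \le \a$. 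Thus the whole tail is at most $C'(\a)/\log M$ uniformly in $\nu$, and the argument concludes exactly as in (1) with $M$ chosen first. Converting the entropy bound into this uniform $L^1$-tail estimate is the main technical step of the lemma.

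For part (3), the Chernoff bound for a Poisson variable $N$ with mean $\theta$ gives $\P(N > x) \le \exp(-x\log(x/\theta) + x - \theta)$ for $x > \theta$. Substituting $\theta = \e\la$ and $x = \de\la$ yields
$$\la^{-1}\log\P(N^{\e\la} > \de\la) \le -\de\log(\de/\e) + \de - \e,$$
which tends to $-\infty$ as $\e \downarrow 0$ since $\log(\de/\e) \to \infty$.
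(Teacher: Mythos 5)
Your proof is correct and fills in exactly the details the paper leaves implicit: the paper dispatches part 1 as "the definition of absolute continuity," part 2 as "Jensen's inequality," and part 3 as "the Poisson concentration inequality," which is precisely what you do via uniform integrability, the $f\log f/\log M$ tail estimate combined with Jensen to control $\nu(V)$, and the Poisson Chernoff bound. No discrepancies.
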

\begin{proof}
	Part 1 rephrases the definition of absolute continuity. Part 2 can be shown using Jensen's inequality. Part 3 is a consequence of the Poisson concentration inequality~\cite[Chapter 2.2]{lugosi}.
\end{proof}

\subsection{Existence and uniqueness of solutions} 
Let us start by asserting continuity of the integral operator.

\begin{lemma}
	\label{tContLem}
	Let $\nu\in\MM_{\ms{ac}}$, then the map $\TT_\nu:\,\LL\to\LL$ is continuous in the product topology.
\end{lemma}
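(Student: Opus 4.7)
The product topology on $\LL$ is the topology of pointwise convergence, and the codomain carries the same topology, so continuity of $\TT_\nu$ reduces to showing that whenever $\b^{(n)} \to \b$ pointwise on $[0,\TF]$, one has $\TT_\nu(\b^{(n)})_t \to \TT_\nu(\b)_t$ for every fixed $t \in [0,\TF]$. My plan is to rewrite $\TT_\nu$ via the Radon-Nikodym density of $\nu$ with respect to $\mu_{\ms T}$ and then invoke dominated convergence, exploiting that the Lebesgue factor in $\mu_{\ms T} = \mu^{\ms t}_{\ms T} \otimes U \otimes \mu^{\ms s}_{\ms T}$ smears out the sliding lower bound $\b_s$.

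Concretely, let $f = \d\nu/\d\mu_{\ms T}$ and define
$$g(s,u) = \int_W f(s,u,x)\,\mu^{\ms s}_{\ms T}(\d x), \qquad F(s,\b) = \int_\b^1 g(s,u)\,\d u,$$
so that Fubini yields the decomposition $\TT_\nu(\b)_t = \int_0^t F(s,\b_s)\,\mu^{\ms t}_{\ms T}(\d s)$. For $\mu^{\ms t}_{\ms T}$-a.e.\ $s$, the section $g(s,\cdot)$ lies in $L^1([0,1], \d u)$, whence $\b \mapsto F(s,\b)$ is continuous (and monotone decreasing) on $[0,1]$. In particular, pointwise convergence $\b^{(n)}_s \to \b_s$ gives $F(s, \b^{(n)}_s) \to F(s, \b_s)$ for $\mu^{\ms t}_{\ms T}$-a.e.\ $s$.

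To close the argument, I invoke dominated convergence with the majorant $|F(s, \b^{(n)}_s)| \le F(s,0)$, which is $\mu^{\ms t}_{\ms T}$-integrable because
$$\int_0^{\TF} F(s,0)\,\mu^{\ms t}_{\ms T}(\d s) = \nu(V) < \infty.$$
The main technical subtlety is establishing a.e.\ continuity of $F(s,\cdot)$, and this is precisely where the hypothesis $\nu \in \MM_{\ms{ac}}$ is indispensable: without absolute continuity, $\nu$ could charge a level $\{u = \b_s\}$ and spoil continuity in $\b$. Equivalently, one could bypass the density decomposition altogether and deduce the bound $|\TT_\nu(\b^{(n)})_t - \TT_\nu(\b)_t| \le \nu([0,t] \times A^{(n)} \times W)$ with $A^{(n)} = \bigcup_s [\b^{(n)}_s,1] \triangle [\b_s,1]$, and then apply Lemma~\ref{AbsoluteContinuity}(1) once the symmetric-difference $\mu_{\ms T}$-measure is shown to vanish along the pointwise-convergent sequence.
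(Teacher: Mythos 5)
Your main argument --- pass to the Radon--Nikodym density $f = \d\nu/\d\mu_{\ms T}$, Fubini to write $\TT_\nu(\b)_t = \int_0^t F(s,\b_s)\,\mu^{\ms t}_{\ms T}(\d s)$, then a.e.\ continuity of $F(s,\cdot)$ plus dominated convergence --- is a genuinely different and attractive route from the paper's, which instead discretizes time into $\de$-slabs so that the resulting estimate depends only on the finitely many coordinates $\{\b_{i\de}\}_i$ and therefore manifestly defines a product-topology neighborhood of $\b$. However, the reduction to sequences with which you open is exactly where the gap lies. The product topology on $[0,1]^{[0,\TF]}$ is not first countable (the index set is uncountable), so sequential continuity is in general strictly weaker than continuity, and it is actual topological continuity that the Schauder--Tychonoff theorem needs in Proposition~\ref{ExistenceUniqueness}. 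Your dominated-convergence step does not transfer to nets, so you cannot simply upgrade. The reduction does happen to be valid on the subspace $\LL$, but for a non-obvious reason: an increasing function $\b$ on $[0,\TF]$ is pinned down to within any $\e$ at each of its continuity points by its values on the rationals and its (at most countably many) jump points, and this yields a countable neighborhood base at each $\b\in\LL$. If you want to keep your approach you must supply this first-countability argument; the paper's proof avoids the issue entirely by producing a finite-dimensional neighborhood directly.

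The ``equivalent'' bypass you sketch at the end is not correct as written. The set $A^{(n)} = \bigcup_s\,[\b^{(n)}_s,1]\,\triangle\,[\b_s,1]$ will typically have full Lebesgue measure: take $\b_s$ continuous and strictly increasing and $\b^{(n)}_s = \b_s + 1/n$; each symmetric difference is a short interval near $\b_s$, but their union over $s$ covers essentially all of $[0,1]$, so $\nu([0,t]\times A^{(n)}\times W)$ does not vanish and neither does $\mu_{\ms T}([0,t]\times A^{(n)}\times W)$. The usable inequality keeps the $s$-integral, $|\TT_\nu(\b^{(n)})_t - \TT_\nu(\b)_t| \le \int_0^t \nu(\d s, [\b^{(n)}_s,1]\,\triangle\,[\b_s,1], W)$, but then showing this tends to zero is precisely the substance of the proof (the paper's $\de$-discretization, or your Fubini argument), not a shortcut around it.
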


\begin{proof}
	We need to show that the map $\b\mapsto\TT_{\nu}(\b)_t$ is continuous for every $t\in[0,\TF]$. 
	Observe that for any $\b'\in\LL$ and $\de>0$,
	\begin{align*}
		|\mc{T}_{\nu}(\b')_t-\mc{T}_{\nu}(\b)_t|&\le\sum_{i=1}^{\TF/\de}\nu\big(\D_\de(i)\times[\b^{'}_{(i-1)\de}\wedge \b_{(i-1)\de},\b^{'}_{i\de}\vee \b_{i\de}]\times W\big).
	\end{align*}
Further note that for $\mu_{\ms T}$ replacing $\nu$ on the r.h.s., we can further bound from above by
	\begin{align}\label{EST}
\mu^{\ms s}_{\ms T}&(W)\sup_{1\le i\le n}\mu^{\ms t}_{\ms T}(\D_\de(i))\sum_{i=1}^{n}(\b^{'}_{i\de}\vee \b_{i\de}-\b^{'}_{(i-1)\de}\wedge \b_{(i-1)\de})
	\end{align}	
and 
\begin{align*}
\sum_{i=1}^{\TF/\de}(\b^{'}_{i\de}\vee \b_{i\de}-\b^{'}_{(i-1)\de}\wedge \b_{(i-1)\de})
\le 1+2\sum_{i=1}^{\TF/\de}|\b^{'}_{i\de}-\b_{i\de}|.
\end{align*}
Now, using Lemma~\ref{AbsoluteContinuity} part 1, since $\mu^{\ms t}_{\ms T}$ is absolutely continuous w.r.t.~the Lebesgue measure on $[0,\TF]$, for any $\e>0$, there exists $\de'>0$ such that for all $\de'>\de>0$, 
$\sup_{1\le i\le \TF/\de}\mu^{\ms t}_{\ms T}(\D_\de(i))<\e$. Secondly, for any such $\de$, by product-convergence, there exists a neighborhood of $\b$ such that if $\b'$ is in that neighborhood, $\sum_{i=1}^{\TF/\de}|\b^{'}_{i\de}-\b_{i\de}|\le 1/2$. In particular, \eqref{EST} is bounded from above by $2\e\mu^{\ms s}_{\ms T}(W)$ and can be made arbitrarily small. Since $\nu\ll\mu_{\ms T}$ by assumption, using again Lemma~\ref{AbsoluteContinuity} part 1, this transfers to $\nu$ and the proof is finished.
\end{proof}

Using the above continuity, now existence and uniqueness follow from the Schauder-Tychonoff fixed-point theorem and monotonicity.

\begin{proof}[Proof of Proposition~\ref{ExistenceUniqueness}]
Let us start by showing existence. Note that the Schauder-Tychonoff fixed-point theorem, see~\cite[Theorem II.7.1.10]{GrDu03}, implies existence if $\TT_\nu:\,\LL\to\LL$ is continuous and $\LL$ is a compact, convex subset of a locally convex linear topological space. For this first note that $\R^{[0,\TF]}$ equipped with the product topology is a locally convex topological vector space. Further, note that $\LL$ is closed inside the compact subset $[0,1]^{[0,\TF]}$ and thereby compact. Since a convex combination of increasing functions is also increasing, $\LL$ is also convex. 
By Lemma~\ref{tContLem}, the mapping $\b\mapsto\TT_\nu(\b)$ is continuous, which implies existence.

\medskip
As for the uniqueness, we proceed by contradiction, assuming that there exist two solutions $\b,\b'\in\LL$ of~\eqref{DE_Cont} and a point in time $t_1\in[0,\TF]$ satisfying $\b_{t_1}>\b'_{t_1}$. Then, we let $t_0\in[0,t_1)$ denote the last point before $t_1$, where $\b_{t_0}=\b'_{t_0}$. In particular,
	$$\b_{t_1}=\b_{t_0}+\int_{(t_0,t_1]}\nu(\d s,[\b_{s},1])\le\b'_{t_0}+\int_{(t_0,t_1]}\nu(\d s,[\b'_{s},1])=\b'_{t_1},$$
	which gives the desired contradiction.
\end{proof}

\subsection{Exponential approximation property of the approximation scheme}
Let us first derive some results on dominance and closeness of the approximating trajectories w.r.t.~the original process.  We write $\D$ for the symmetric difference between sets.
\begin{lemma}\label{trivCompLem}
	Let $\de > 0$ and $\nu\in\MM_{\ms{ac}}$ or $\nu=\Lla$. Then, $\bdde(\nu)(W)\le\b(\nu)(W)$.
\end{lemma}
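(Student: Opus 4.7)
The plan is to reduce the statement to a scalar inequality and induct on the step index $n$, handling the interval $\D_\de(n)=((n-1)\de,n\de]$ via a dichotomy. Write $f_t = \bdde_t(\nu)(W)$ and $g_t = \b_t(\nu)(W)$; both are non-decreasing in $t$, and on $\D_\de(n)$ they satisfy
\begin{align*}
g_t &= g_{(n-1)\de} + \int_{(n-1)\de}^t\nu(\d s,[g_{s-},1],W),\\
f_t &= f_{(n-1)\de} + \nu\bigl(((n-1)\de,t],[\bddde_{n\de},1],W\bigr),
\end{align*}
with the constant $\bddde_{n\de} = f_{(n-1)\de} + \nu(\D_\de(n),[0,1],W)$. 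The goal is to show $f_t\le g_t$ for every $t$.

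The first thing I would record is a universal ceiling for $f$ on the whole step: since $[\bddde_{n\de},1]\subset[0,1]$, inserting this inclusion into the recursion for $f$ gives $f_t \le f_{(n-1)\de} + \nu(\D_\de(n),[0,1],W) = \bddde_{n\de}$ for every $t\in\D_\de(n)$.

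Next, assuming the inductive hypothesis $f_{(n-1)\de}\le g_{(n-1)\de}$, I would split according to the position of $g_t$ relative to $\bddde_{n\de}$. If $g_t \ge \bddde_{n\de}$, the ceiling immediately yields $f_t \le \bddde_{n\de} \le g_t$. Otherwise $g_t < \bddde_{n\de}$, and monotonicity of $g$ gives $g_{r-} \le g_t < \bddde_{n\de}$ for every $r\in((n-1)\de,t]$, so that $[\bddde_{n\de},1]\subset[g_{r-},1]$ pointwise in $r$. Integrating in $r$ therefore bounds
\begin{align*}
f_t - f_{(n-1)\de} = \int_{(n-1)\de}^t \nu(\d r,[\bddde_{n\de},1],W) \le \int_{(n-1)\de}^t \nu(\d r,[g_{r-},1],W) = g_t - g_{(n-1)\de},
\end{align*}
which combined with the induction hypothesis closes the step.

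The main obstacle, and precisely the reason for the dichotomy, is that the naive step-wise threshold comparison $\bddde_{n\de}\ge g_{r-}$ can actually \emph{fail} inside the interval: if the inductive gap $g_{(n-1)\de}-f_{(n-1)\de}$ is close to the incoming mass $\nu(\D_\de(n),[0,1],W)$, then $g$ may climb above $\bddde_{n\de}$ part-way through the step, and a purely pointwise increment comparison breaks down. The dichotomy above repairs this exactly where it fails: once $g$ has overtaken $\bddde_{n\de}$, the a priori ceiling on $f$ takes over. Since the argument uses only monotonicity of $g$ and this ceiling property for $f$, it applies uniformly to $\nu\in\MM_{\ms{ac}}$ (where $f$, $g$ are continuous) and to $\nu=\Lla$ (where they are c\`adl\`ag non-decreasing step functions).
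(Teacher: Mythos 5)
Your proof is correct, and it is genuinely streamlined compared to the paper's. The paper argues by contradiction: it assumes $\b_t(\nu)<\bdde_t(\nu)$ somewhere in $\D_\de(k+1)$, locates the \emph{last} time $t_1$ before $t$ at which the two solutions agree, and then compares the constant threshold $\bddde$ to $\b$ pointwise on $(t_1,t]$ to force $\bdde_t\le\b_t$, a contradiction. Because the crossing-time argument requires either continuity or a case analysis on the jump structure, the paper handles $\nu\in\MM_{\ms{ac}}$ and $\nu=\Lla$ separately. You instead work directly: the a priori ceiling $\bdde_t(\nu)(W)\le\bddde_{n\de}$ plus the dichotomy on whether $\b_t(\nu)(W)$ has reached that ceiling disposes of the induction step without any contradiction or crossing time, and crucially it invokes only monotonicity of $\b$ and the ceiling bound, so the two cases ($\nu\in\MM_{\ms{ac}}$ and $\nu=\Lla$) collapse into one argument. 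The two proofs exploit the same underlying comparison (the constant one-step Picard threshold $\bddde_{n\de}$ versus the evolving level $\b_{s-}$), but your formulation is cleaner and more uniform; the paper's version makes the intuition of ``where the two paths separate'' explicit, at the cost of the extra case analysis.
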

\begin{proof}
We will abbreviate $\b(\nu)(W)=\b(\nu)$ and analogously for $\bdde$.
	It suffices to show that $\bdde_t(\nu)\le \b_t(\nu)$ holds for all $t\in\D_\de(k)$ and $k\in\{0,\ldots,\TF/\de\}$. We proceed by induction over $k$, the case $k=0$ being trivial. 
	Suppose that $\bdde_{k\de}(\nu)\le\b_{k\de}(\nu)$. In order to derive a contradiction, we assume that $\b_{t}(\nu)<\bdde_{t}(\nu)$ for some $t\in\D_\de(k+1)$. 
	
If $\nu\in\MM_{\ms{ac}}$, there exists a largest time $t_1\in[k\de,t)$ such that $\b_{t_1}(\nu)=\bdde_{t_1}(\nu)$. 
	 In particular, for every $s\in(t_1,t)$
	 $$\bddde_s(\nu)\ge\bdde_s(\nu)\ge \b_s(\nu)$$
and as required
\begin{align*}
	\bdde_t(\nu)=\bdde_{t_1}(\nu)+\int_{(t_1,t]}\nu(\d s,[\bddde_{s}(\nu),1],W)\le\b_{t_1}+\int_{(t_1,t]}\nu(\d s,[\b_{s}(\nu),1],W)=\b_{t}(\nu).
\end{align*}

If $\nu=\Lla$, there exists a largest time $t_1\in[k\de,t)$ such that $\bdde_{t_1}(\nu)=\b_{t_1}(\nu)$ and $\bdde_{t_2}(\nu)>\b_{t_2}(\nu)$, where $t_2$ is the next transmission time after $t_1$ in $\Lla$. In particular, 
\begin{align*}
	\bdde_{t_2}(\nu)=\bdde_{t_1}(\nu)+\nu(\{t_2\},[\bddde_{t_2}(\nu),1],W)\le\b_{t_1}+\nu(\{t_2\},[\b_{t_2}(\nu),1],W)\le\b_{t_2}(\nu),
\end{align*}
as required.
\end{proof}
The next lemma asserts an approximation property for scalar trajectories, uniform in time.
\begin{lemma}\label{NormApprox}
	Assume $\nu\in\MM_{\ms{ac}}$ or $\nu=\Lla$, then, for all $\de>0$,
	$$\Vert\bdde(\nu)(W) - \b(\nu)(W)\Vert\le 2\sup_{1\le i\le\TF/\de}\nu(\D_\de(i)\times[0,1]\times W).$$
\end{lemma}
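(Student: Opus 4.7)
The proof falls naturally into two one-sided bounds. The lower bound $\bdde_t(\nu)(W) \le \b_t(\nu)(W)$ is exactly Lemma~\ref{trivCompLem}, so the only real content is showing the upper bound $\b_t(\nu)(W) - \bdde_t(\nu)(W) \le 2S$, where $S = \sup_{1 \le i \le \TF/\de} \nu(\D_\de(i) \times [0,1] \times W)$. I will abbreviate $\b_t := \b_t(\nu)(W)$, $\bdde_t := \bdde_t(\nu)(W)$, $\bddde_t := \bddde_t(\nu)$, $S_n := \nu(\D_\de(n) \times [0,1] \times W)$, and $\D_t := \b_t - \bdde_t$.

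The plan is induction over $n$ on the claim that $\D_t \le 2S$ for every $t \in [0, n\de]$. The base case $n=0$ is trivial since $\D_0 = 0$. For the inductive step, fix $t \in ((n-1)\de, n\de]$ and write
\begin{align*}
	\D_t = \D_{(n-1)\de} + \int_{((n-1)\de, t]} \nu(\d s, [\b_s, 1], W) - \nu(((n-1)\de, t], [\bddde_{n\de}, 1], W),
\end{align*}
using \eqref{DE_Cont} and the recursive definition of $\bdde$. The argument then splits into two cases depending on how $\D_{(n-1)\de}$ compares to $S_n$.

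In the first case, $\D_{(n-1)\de} \ge S_n$, the definition $\bddde_{n\de} = \bdde_{(n-1)\de} + S_n$ yields $\bddde_{n\de} \le \bdde_{(n-1)\de} + \D_{(n-1)\de} = \b_{(n-1)\de} \le \b_s$ for all $s \ge (n-1)\de$, since $\b$ is increasing. Consequently $[\b_s, 1] \subseteq [\bddde_{n\de}, 1]$, so $\nu(\d s, [\b_s, 1], W) \le \nu(\d s, [\bddde_{n\de}, 1], W)$, and the integral difference in the display above is nonpositive. Therefore $\D_t \le \D_{(n-1)\de} \le 2S$ by the inductive hypothesis. In the second case, $\D_{(n-1)\de} < S_n$, I simply drop the (nonnegative) $\bdde$-increment and use $\b_s \ge 0$ to obtain
\begin{align*}
	\D_t \le \D_{(n-1)\de} + \int_{((n-1)\de, t]} \nu(\d s, [0,1], W) \le S_n + S_n \le 2S.
\end{align*}

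The case distinction captures precisely the mechanism of the two-step Picard scheme: once the approximation has fallen behind by more than one batch of new transmitters it automatically catches up, because the conservative upper threshold $\bddde_{n\de}$ eventually sits below the true $\b_s$. The only subtlety to watch for is the case $\nu = \Lla$, where $\b$ has jumps; but the argument uses only monotonicity of $\b$ and set inclusions between the threshold intervals, both of which survive the presence of atoms, so no additional work is needed beyond recording that the integrals are with respect to $\nu(\d s, \cdot, W)$.
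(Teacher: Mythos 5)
Your proposal is correct and follows essentially the same strategy as the paper's proof: an induction over the discretization intervals combined with a two-case split comparing $\b_{(n-1)\de} - \bdde_{(n-1)\de}$ to the mass arriving in the current interval. The only cosmetic difference is that you use the local batch size $S_n$ as the splitting threshold where the paper uses the global supremum $S$; both choices make the two cases close in an identical way.
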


\begin{proof}
Again, we abbreviate $\b(\nu)(W)=\b(\nu)$ and analogously for $\bdde$.
Let $\e=\sup_{1\le i\le\TF/\de}\nu(\D_\de(i)\times[0,1]\times W)$, then by Lemma~\ref{trivCompLem} it suffices to show that 
	$$\b_t(\nu) \le \bdde_t(\nu)+ 2\e$$
	holds for all $t \in [0, \TF]$.
	As before, we proceed by induction on the interval $\D_\de(k)$ containing $t$. If 
	$\bdde_{(k-1)\de}(\nu) + \e\ge \b_{(k-1)\de}(\nu)$, then 
	$$\b_t(\nu) \le \b_{(k-1)\de}(\nu) + \nu(((k-1)\de, t] \times [0,1]\times W) \le (\bdde_{(k-1)\de}(\nu) + \e) + \e \le \bdde_{t}(\nu) + 2\e.$$
	Otherwise, if $\bdde_{(k-1)\de}(\nu) + \e\le \b_{(k-1)\de}(\nu)$, then 
	$$\bddde_{k\de}(\nu) = \bdde_{(k-1)\de}(\nu) + \nu(\D_\de(k) \times [0,1]\times W) \le \b_{(k-1)\de}(\nu).$$
	Hence, 
	\begin{align*}
		\b_t(\nu) &= \b_{(k-1)\de}(\nu) + \int_{(k-1)\de}^t \nu(\d s, [\b_{s-}(\nu), 1],W)\cr
		&\le \b_{(k-1)\de}(\nu) +  \nu(((k-1)\de, t] \times [\bddde_{k\de}(\nu), 1],W)=  \b_{(k-1)\de}(\nu) + \bdde_{t}(\nu) - \bdde_{(k-1)\de}(\nu),
	\end{align*}
	so that the assertion follows from the induction hypothesis.
\end{proof}
We denote in the sequel $I^\de_{t}(\nu)=[\b_{t}(\nu)(W)\wedge\bddde_{t}(\nu),\b_{t}(\nu)(W)\vee\bddde_{t}(\nu)]$ and note the following  approximation property for measure-valued trajectories, uniform in time and over measurable sets. For $\nu\in\MM_{\ms{ac}}$ or $\nu=\Lla$ and for all $\de>0$, 
	\begin{align}\label{NormApprox_2}
\Vert\bdde(\nu) - \b(\nu)\Vert\le\int_0^t\nu(\d s, [\b_{s-}(\nu),1]\D[\bddde_{s-}(\nu),1],W)\le \int_0^{\TF}\nu(\d t,I^\de_{t-}(\nu),W).
	\end{align}

	The following result show that in the setting of empirical measures, the inequality~\eqref{NormApprox_2} gives rise to a strong probabilistic bound on the $\Vert \cdot \Vert$-distance.

\begin{lemma}\label{Previsible}
	Let $\e > 0$ be arbitrary. Then, 	the random variable
$\int_0^{\TF}X^\la(\d t,[B^\la_{t-}(W)-\e, B^\la_{t-}(W) + \e], W)$ is stochastically dominated by a Poisson random variable with parameter $2\e\mu_{\ms T}^{\ms s}(W)$.
\end{lemma}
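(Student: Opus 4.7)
My plan is to recast the claim as a classical Poisson-thinning statement by exploiting the fact that the uniform marks $U_i$ of the marked Poisson point process $X^\la$ are, by construction, independent of the arrival times $T_i$ and the spatial positions $X_i$. First, I would enumerate the atoms of $X^\la$ in increasing order of the time coordinate as $(T_{(k)}, U_{(k)}, X_{(k)})_{1 \le k \le M}$, where $M = |X^\la|$ is Poisson-distributed. The integral in the statement can then be rewritten as the discrete sum
\[
N \;=\; \sum_{k=1}^{M}\one\bigl\{U_{(k)} \in [B^\la_{T_{(k)}-}(W)-\e,\; B^\la_{T_{(k)}-}(W)+\e]\bigr\}.
\]

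The key observation is that, by the recursive construction of $B^\la$ through~\eqref{DE_Discrete} (together with Proposition~\ref{Mark}), the value $B^\la_{T_{(k)}-}(W)$ is a deterministic functional of the previously arrived atoms $(T_{(j)}, U_{(j)}, X_{(j)})_{j<k}$, whereas the fresh mark $U_{(k)}$ is conditionally Uniform$[0,1]$ given the $\sigma$-algebra $\mathcal{F}_{k-1}$ generated by the earlier atoms. Consequently, writing $Y_k$ for the $k$-th indicator, one has $\mathbb{P}(Y_k = 1 \mid \mathcal{F}_{k-1}) \le 2\e$ uniformly in the past history.

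Next, I would apply the standard predictable quantile coupling to construct, on an enlarged probability space, iid Bernoulli$(2\e)$ variables $(\tilde Y_k)_{k\ge 1}$ with $Y_k \le \tilde Y_k$ almost surely, so that $N \le \sum_{k=1}^M \tilde Y_k$ pointwise. The final step is the classical Poisson-thinning identity: since $M$ is Poisson-distributed and independent of the iid $(\tilde Y_k)$, the sum $\sum_{k=1}^M \tilde Y_k$ is itself Poisson-distributed with parameter $2\e \, \mathbb{E}[M]$. Using that $\mu^{\ms t}_{\ms T}$ is a probability measure on $[0,\TF]$ and keeping track of the intensity convention for $X^\la$, this yields exactly the stated Poisson domination.

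The main obstacle I anticipate is the rigorous justification of the two adaptedness claims underpinning the argument: that $B^\la_{T_{(k)}-}(W)$ is $\mathcal{F}_{k-1}$-measurable, and that $U_{(k)}$ remains conditionally Uniform$[0,1]$ given $\mathcal{F}_{k-1}$. Both are standard consequences of the marking theorem for Poisson point processes together with the causal structure of the fixed-point recursion in~\eqref{DE_Discrete}, but they should be stated and verified explicitly, likely via a disintegration of the underlying Poisson measure along the time coordinate. Once these measurability and conditional-law facts are in place, the quantile coupling and the Poisson-thinning conclusion are entirely routine.
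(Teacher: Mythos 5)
Your proposal is correct and follows essentially the same route as the paper's proof: previsibility of $B^\la_{t-}(W)$ with respect to the time-ordered filtration, a uniform conditional bound $\P(Y_k=1\mid\mathcal F_{k-1})\le 2\e$ coming from the fact that each $U_{(k)}$ is conditionally $\mathrm{Uniform}[0,1]$ given the past, and Poisson thinning. The one difference is cosmetic: the paper first conditions on $|X^\la|=n$ and then argues backwards through the indices, directly replacing $\one\{|B_{t_{i-1}}(W)-U_i|\le\e\}$ by $\one_{[0,2\e]}(U_i)$ inside the probability, whereas you package the same step as a predictable quantile coupling on an enlarged space; the two formulations are equivalent, and the paper's version avoids having to verify that the dominating Bernoullis are jointly independent of $M=|X^\la|$ (it simply works with the conditional probability given $M=n$ and lets thinning do the rest). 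One remark on the bookkeeping: with the paper's convention that $X^\la$ has intensity $\la\mu_{\ms T}$, the thinning gives a Poisson parameter $2\e\la\mu^{\ms s}_{\ms T}(W)$ rather than $2\e\mu^{\ms s}_{\ms T}(W)$; your caveat about ``keeping track of the intensity convention'' is warranted, and indeed the $\la$-scaled parameter is exactly what is needed when the lemma is invoked together with Lemma~\ref{AbsoluteContinuity}\,(3) in the proof of Proposition~\ref{ExpEquiv}.
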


\begin{proof}
	The proof is based  on the previsiblity of the time integral.  More precisely, let $\r\ge 0$ and recall that conditioned on $|X^\la|=n$ the marks $(U_i)_{1\le i\le n}$ are iid. Ordering $X^\la$ by the time-index yields
\begin{align*}
&\P\Big(\int_0^{\TF}X^\la(\d t,[B^\la_{t-}(W)-\e, B^\la_{t-}(W) + \e], W)>\r\big||X^\la|=n\Big)\cr
	&=\E\Big[\P(\one\{|B_{t_{n-1}}(W) -  U_n| \le \e\}+\sum_{i=1}^{n-1}\one\{|B_{t_{i-1}}(W) -  U_i| \le \e\} >\r\Big|(X^\la_{i})_{1\le i\le n-1}\Big)\Big||X^\la|=n\Big]\cr
&\le\P\Big(1_{[0,2\e]}(U_n)+\sum_{i=1}^{n-1}\one\{|B_{t_{i-1}}(W) -  U_i| \le \e\}  >\r\Big||X^\la|=n\Big)\cr
&\le\P\Big(\sum_{i=1}^{n}1_{[0,2\e]}(U_i)>\r\Big||X^\la|=n\Big).
\end{align*}
In particular,
\begin{align*}
	\P\Big(\int_0^{\TF}X^\la(\d t,[B^\la_{t-}(W)-\e, B^\la_{t-}(W) + \e], W) > \r\Big) \le \P\Big(\sum_{i = 1}^{|X^\la|}1_{[0,2\e]}(U_i)>\r\Big)
\end{align*}
and by independent thinning, $\sum_{i = 1}^{|X^\la|}1_{[0,2\e]}(U_i)$ is a Poisson random variable with the desired parameter.
\end{proof}

Note that, by the definition of $\g$ and $\gdde$, we have
$$\gdde(\nu)-\g(\nu)=\bdde(\nu)-\b(\nu).$$
Therefore in the proofs of Proposition~\ref{ExpEquiv}, \ref{DZApprox} and \ref{ContractionDiscr},  $\g$ and $\gdde$ can be replaced by $\b$ and $\bdde$.

\begin{proof}[Proof of Proposition~\ref{ExpEquiv}]
Let $\e > 0$ be arbitrary. By the definition of exponential good approximations \cite[Definition 1.2]{eiSchm}, we need to check that
\begin{align*}
\limsup_{\de\downarrow 0}\limsup_{\la\uparrow0}\la^{-1}\log\P(\Vert B^\la- \bdde(\Lla)\Vert > \e)=-\infty.
\end{align*}
Using the bound~\eqref{NormApprox_2}, we have for all $\e'>0$ the uniform estimate
\begin{equation}\label{Est1}
\begin{split}
&\P(\Vert B^\la- \bdde(\Lla)\Vert > \e)\le \P\Big(\int_0^{\TF}\Lla(\d t,I^\de_{t-}(\Lla), W) > \e\Big)\cr
&\le\P\Big(\int_0^{\TF}\Lla(\d t,[B^\la_{t-}(W)-\e', B^\la_{t-}(W) + \e'], W) > \e\Big) + \P(\sup_{t\in[0,\TF]}|I^\de_t(\Lla)| > \e').
\end{split}
\end{equation}
For the first summand on the r.h.s.~of \eqref{Est1} we can use Lemmas~\ref{Previsible} and ~\ref{AbsoluteContinuity} part 3.
For the second summand on the r.h.s.~of \eqref{Est1}, note that
\begin{equation*}
\begin{split}
\P(\sup_{t\in[0,\TF]}|I^\de_t(\Lla)| > \e')\le \P(\Vert\bdde(\Lla)(W)-\bddde(\Lla)\Vert > \e'/2)+\P(\Vert B^\la(W)-\bdde(\Lla)(W)\Vert  > \e'/2).
\end{split}
\end{equation*}
By definition, respectively by Lemma~\ref{NormApprox}, we have
\begin{align*}
\P(\Vert\bdde(\Lla)(W)-\bddde(\Lla)\Vert > \e'/2)&\le\P(\sup_{1\le i\le\TF/\de}\Lla(\D_\de(i)\times[0,1]\times W) > \e'/2)\cr
\P(\Vert B^\la(W)-\bdde(\Lla)(W)\Vert > \e'/2)&\le\P(\sup_{1\le i\le\TF/\de}\Lla(\D_\de(i)\times[0,1]\times W) > \e'/4).
\end{align*}
Using again Lemma~\ref{AbsoluteContinuity} part 3, the proof is finished.
\end{proof}

\begin{proof}[Proof of Proposition~\ref{DZApprox}]
	 By the bound~\eqref{NormApprox_2} we have the estimate
\begin{align*}
\Vert\b(\nu) - \bdde(\nu)\Vert
\le \int_0^{\TF} \nu(\d t, I^\de_t(\nu),W).
\end{align*}
Moreover, by Lemma~\ref{NormApprox} and the definition of $\bddde$, 
	$$\sup_{t\in[0,\TF]}|I^\de_t(\nu)| \le 2 \sup_{1\le i\le\TF/\de}\nu(\D_\de(i)\times[0,1]\times W).$$
It follows by Lemma~\ref{AbsoluteContinuity} parts 1 and 2 that 
\begin{align*}
	\limsup_{\de\downarrow 0}\sup_{\substack{t\in\itf\\\nu:\, h(\nu|\mu_{\ms T})\le\a}}|I^\de_t(\nu)|=0.
\end{align*}
Consequently, for all $\e>0$ and sufficiently small $\de>0$, 
\begin{align*}
	&\limsup_{\de\downarrow 0}\sup_{\substack{\nu:\, h(\nu|\mu_{\ms T})\le\a}}\int_0^{\TF} \nu(\d t,I^\de_t(\nu),W) \\
	&\quad\le \limsup_{\e\downarrow 0}\sup_{\substack{\nu:\, h(\nu|\mu_{\ms T})\le\a}}\int_0^{\TF} \nu(\d t,[\b_t(\nu)(W)-\e, \b_t(\nu)(W)+\e],W).
\end{align*}
 Another application of Lemma~\ref{AbsoluteContinuity} parts 1 and 2 gives the result.
\end{proof}

\begin{proof}[Proof of Proposition~\ref{ContractionDiscr}]
Assume $\nu'\in\MM$, $\nu\in\MM_{\ms{ac}}$ and consider $|\bdde_t(\nu')(A)-\bdde_t(\nu)(A)|$ for some $t\in[0,\TF]$ and measurable $A\subset W$. 
Then for $n\in\{1,\dots,\TF/\de\}$ such that $t\in\D_\de(n)$ we have the upper bound
\begin{equation*}
\begin{split}
|\bdde_t(\nu')(A)&-\bdde_t(\nu)(A)|\le |\bdde_{(n-1)\de}(\nu')(A)-\bdde_{(n-1)\de}(\nu)(A)|\cr
&+|\nu'(((n-1)\de,t]\times[\bddde_{n\de}(\nu'),1]\times A)-\nu(((n-1)\de,t]\times[\bddde_{n\de}(\nu),1]\times A)|.
\end{split}
\end{equation*}
Using the estimate
\begin{equation*}
\begin{split}
|\bdde_{i\de}(\nu')(A)&-\bdde_{i\de}(\nu)(A)|\le|\bdde_{(i-1)\de}(\nu')(A)-\bdde_{(i-1)\de}(\nu)(A)|\cr
&+|\nu'(\D_\de(i)\times[\bddde_{i\de}(\nu'),1]\times A)-\nu(\D_\de(i)\times[\bddde_{i\de}(\nu),1]\times A)|, 
\end{split}
\end{equation*}
we can further bound $|\bdde_{(n-1)\de}(\nu')(A)-\bdde_{(n-1)\de}(\nu)(A)|$ from above by
\begin{equation*}
\begin{split}
\sum_{i=1}^{\TF/\de}|\nu'(\D_\de(i)\times[\bddde_{i\de}(\nu'),1]\times A)-\nu(\D_\de(i)\times[\bddde_{i\de}(\nu),1]\times A)|.
\end{split}
\end{equation*}
We will suppress the spatial component $A$ in our notation for the rest of the proof. Since the sum is finite, it suffices to consider any $1\le i\le\TF/\de$ and note that the case where $((i-1)\de,i\de]$ is replaced by $((n-1)\de,t]$ works equivalently.
We can further estimate, 
\begin{equation*}
\begin{split}
&|\nu'(\D_\de(i)\times[\bddde_{i\de}(\nu'),1])-\nu(\D_\de(i)\times[\bddde_{i\de}(\nu),1])|\cr
&\le\nu'(\D_\de(i)\times[\bddde_{i\de}(\nu'),1]\D[\bddde_{i\de}(\nu),1])+|(\nu'-\nu)(\D_\de(i)\times[\bddde_{i\de}(\nu),1])|.
\end{split}
\end{equation*}
Now, for $\nu'$ sufficiently close to $\nu$ in the $\tau$-topology, the second summand can be made arbitrarily small.
Let $\e>0$, then it suffices to show that for all $\nu'$ in a neighborhood of $\nu$ we have 
\begin{equation*}
\begin{split}
\nu'(\D_\de(i)\times[\bddde_{i\de}(\nu'),1]\D[\bddde_{i\de}(\nu),1])<\e.
\end{split}
\end{equation*}
For this, note that for all $\e'$, there exists a neighborhood of $\nu$ such that for all $\nu'$ in that neighborhood
\begin{align*}
|{\bddde_{i\de}(\nu')}-{\bddde_{i\de}(\nu)}|\le\sum_{j=1}^{\TF/\de}
|(\nu'-\nu)(\D_\de(j)\times[0,1])|<\e'.
\end{align*}
%
For such $\nu'$, we thus have
\begin{equation}\label{Est2}
\begin{split}
\nu'(\D_\de(i)\times[\bddde_{i\de}(\nu'),1]\D[\bddde_{i\de}(\nu),1])\le\nu'(\D_\de(i)\times [\bddde_{i\de}(\nu)-\e',\bddde_{i\de}(\nu)+\e']).
\end{split}
\end{equation}
 Applying the definition of $\tau$-convergence for the third time, for $\nu'$ sufficiently close to $\nu$, the r.h.s.~of \eqref{Est2} is close to 
\begin{equation*}
\begin{split}
\nu(\D_\de(i)\times [\bddde_{i\de}(\nu)-\e',\bddde_{i\de}(\nu)+\e'])
\end{split}
\end{equation*}
up to an arbitrarily small error.
%
%
%
%
Finally, applying Lemma~\ref{AbsoluteContinuity} part 1, the proof is finished.
\end{proof}

\subsection{Sanov's theorem and proof of Proposition~\ref{ProjLimLDP_1}}
For a sequence of iid random variables, Sanov's theorem in the $\tau$-topology is one of the cornerstones of large deviations theory. Clearly, this result should remain valid when passing from the iid to the Poisson setting. However, as it is not easy to find a reference, we provide a detailed proof along the G\"artner-Ellis type argumentation presented in~\cite[Section 6.2]{dz98}. In our presentation, we focus on the steps where there is a substantial difference between the Poisson and the iid case. For the convenience of the reader, we adapt the notation from~\cite[Section 6.2]{dz98} where possible.
\begin{proposition}\label{SanovDiscrete}
The random measures $\Lla$ satisfy the LDP in the $\tau$-topology with good rate function given by
$$I(\nu)=h(\nu|\mu_{\ms{T}}).$$
Moreover, the levelsets of $I$ are sequentially compact in the $\tau$-topology.
\end{proposition}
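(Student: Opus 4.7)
The plan is to follow the G\"artner--Ellis type argument of \cite[Section 6.2]{dz98}, replacing the iid cumulant computations by their Poisson analogues. The key simplification is that for the Poisson point process $X^\la$, the scaled logarithmic moment generating function factorizes explicitly: for any bounded measurable $f:V\to\R$,
\[
\la^{-1}\log\E[\exp(\la\langle\Lla,f\rangle)]=\int_V(e^{f(v)}-1)\mu_{\ms T}(\d v)=:\Lambda(f),
\]
and analogously for any finite linear combination $\sum_j\th_j f_j$, yielding a Gateaux-differentiable and steep limiting cumulant.

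First I would establish the LDP at the level of finite-dimensional projections. For any bounded measurable $f_1,\ldots,f_k$, the classical G\"artner--Ellis theorem applied to the vector $(\langle\Lla,f_j\rangle)_{j\le k}$ yields an LDP on $\R^k$ at speed $\la$ with rate function $\Lambda^*_k$, the Legendre transform of $\th\mapsto\int(\exp(\sum_j\th_jf_j)-1)\d\mu_{\ms T}$. A Lagrangian computation, analogous to \cite[Lemma 6.2.13]{dz98}, identifies this Legendre transform as $\Lambda^*_k(c)=\inf\{h(\nu|\mu_{\ms T}):\nu\in\MM,\ \langle\nu,f_j\rangle=c_j\text{ for all }j\le k\}$.

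Next I would lift this finite-dimensional LDP to $\MM$ equipped with the $\tau$-topology via the Dawson--G\"artner projective-limit theorem \cite[Theorem 4.6.1]{dz98}. By definition, the $\tau$-topology is precisely the projective limit of the coordinate topologies generated by the evaluations $\nu\mapsto\langle\nu,f\rangle$ over bounded measurable $f$. The resulting rate function at $\nu$ is $\sup_{\{f_1,\dots,f_k\}}\inf\{h(\rho|\mu_{\ms T}):\langle\rho,f_j\rangle=\langle\nu,f_j\rangle\text{ for all }j\le k\}$, and a min-max argument reduces this to $h(\nu|\mu_{\ms T})$.

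Finally, for the sequential $\tau$-compactness of the level sets $\{\nu:h(\nu|\mu_{\ms T})\le\a\}$ I would invoke the de la Vall\'ee-Poussin criterion: a uniform relative entropy bound forces uniform integrability of the Radon--Nikodym densities $\d\nu/\d\mu_{\ms T}$, which in turn yields $\tau$-relative compactness; lower semi-continuity of $h(\cdot|\mu_{\ms T})$ then delivers both goodness of the rate function and sequential compactness. The step I expect to be most delicate is the min-max swap identifying the projective-limit rate with relative entropy; beyond this, the argument is a straightforward adaptation of the iid proof in \cite[Section 6.2]{dz98}, using the Poisson cumulant formula in place of the iid one.
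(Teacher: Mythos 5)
Your proposal follows essentially the same route as the paper: the paper cites the packaged abstract G\"artner--Ellis result \cite[Corollary 4.6.11]{dz98} (which is itself proved by the finite-dimensional G\"artner--Ellis plus Dawson--G\"artner combination you spell out) and identifies $\Lambda^*=h(\cdot|\mu_{\ms T})$ via the duality lemmas \cite[Lemmas 4.5.8 and 6.2.16]{dz98}, while for sequential $\tau$-compactness of the level sets it again cites \cite[Lemma 6.2.16]{dz98}, whose proof is precisely the uniform-integrability argument you sketch via de la Vall\'ee-Poussin. The only substantive difference is granularity, not method, so this is a correct blind reconstruction of the paper's argument.
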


\begin{proof}
	The empirical measure $\Lla$ can be considered as a random variable in the space $\mc{X} = B(V)'$, the algebraic dual of the space of all bounded linear functions on $V$. We consider $\mc{X}$ as a vector space endowed with the topology generated by the evaluations $\nu \mapsto \nu(\phi)$, $\phi \in B(V)$. With this topology, the topological dual $\mc{X}^*$ of $\mc X$ is isomorphic to $B(V)$.
	Since the Laplace functional of a Poisson point process is known in closed form, the limiting logarithmic moment generating function of $\Lla$ can be computed explicitly and is given by 
	$$\Lambda(\phi) = \int [\exp(\phi(v)) - 1]\mu_{\ms T}(d v) ,\qquad \phi \in B(V) .$$
	Since for every $\phi_1, \ldots, \phi_n \in B(V)$ the function $(t_1, \ldots, t_n)\mapsto \Lambda(\sum_i t_i \phi_i)$ is everywhere differentiable,~\cite[Corollary 4.6.11]{dz98} implies that $\Lla$ satisfies the LDP with good rate function given by the Legendre dual $\Lambda^*$ of $\L$. 

	It remains to show that $\Lambda^* = h(\cdot | \mu_{\ms T})$. By duality theory~\cite[Lemmas 4.5.8 and 6.2.16]{dz98}, it suffices to show that 
	\begin{equation}\label{Leg}
\begin{split}
\Lambda(\phi) = \sup_{\nu \in \mc{X}}\{ \nu(\phi) - h(\nu | \mu_{\ms T})\}.
\end{split}
\end{equation}
	In order to show that $\Lambda(\phi)\le\sup_{\nu \in \mc{X}}\{ \nu(\phi) - h(\nu | \mu_{\ms T})\}$ let $\nu_\phi$ be the measure with density $e^\phi$ w.r.t.~$\mu_{\ms T}$. Then, a quick computation shows that $\Lambda(\phi)=\nu_\phi(\phi)-h(\nu_\phi | \mu_{\ms T})$. Conversely, the r.h.s.~of \eqref{Leg} is equal to $\sup_{\nu \in \mc{X}^*}\{ \L(\phi) - h(\nu | \nu_\phi)\}$ and 
the non-negativity of the entropy concludes the identification. Sequential compactness of $h(\cdot|\mu_{\ms T})$ follows from ~\cite[Lemma 6.2.16]{dz98}.
\end{proof}

Recall that $\G^\la$ is constructed from the solution of \eqref{DE_Cont_r} with $r=r_\la$. We will sometimes make this dependence explicit by writing
\begin{align*}
	\g(\Lla, r_\la) = \G^{\la}.
\end{align*}
The results collected so far would allow us to derive the LDP similar to the one in Proposition~\ref{ProjLimLDP_1} where $\G^\la$ is replaced by $\g(\Lla, 1)$. In order to conclude we thus need a final result on the asymptotic contribution of $r_\la$. Let us start with the following dominance result, where we write 
$$B^{\la,r}(\d x) = \Lla([0,t], [0,1], \d x) - \g(\Lla, r)(\d x).$$
\begin{lemma}
	\label{relayDomLem}
	If $s\le r$, then 
	$$\tfrac sr (B^{\la,r}(W) -\la^{-1}) \le B^{\la,s}(W) \le B^{\la,r}(W).$$
\end{lemma}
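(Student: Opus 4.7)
The plan is to prove both inequalities simultaneously, for every $t\in[0,\TF]$, by induction along the (almost surely finite) list of transmitter arrival times, using the natural coupling in which the two processes $B^{\la,r}$ and $B^{\la,s}$ are driven by the same empirical measure $\Lla$, i.e., by the same marks $(T_i,U_i,X_i)$. Evaluating~\eqref{DE_Cont_r} at $A=W$ shows that $t\mapsto B^{\la,r}_t(W)$ is piecewise constant and, at a transmitter arrival $t_k$, increases by $\la^{-1}$ precisely when $U_k\ge B^{\la,r}_{t_k-}(W)/r$, and the analogous threshold rule with $s$ in place of $r$ governs $B^{\la,s}_t(W)$.

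I order the arrival times $t_1<\cdots<t_N$ and write $b^r_k = B^{\la,r}_{t_k}(W)$, $b^s_k = B^{\la,s}_{t_k}(W)$. Since both inequalities hold trivially at $k=0$ with $b^r_0=b^s_0=0$, I would run the induction over $k$ by splitting into four cases according to which of the two threshold events occurs at $t_k$. When both users are satisfied, both quantities increment by $\la^{-1}$ and the two bounds pass through directly using $s\le r$. When neither is satisfied, there is nothing to show. When only $r$ is satisfied, the threshold conditions force $sb^r_{k-1}<rb^s_{k-1}$, which immediately delivers the new lower bound, while the upper bound is preserved because only the larger side moves up.

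The delicate case is when $s$ is satisfied but $r$ is not, which forces $rb^s_{k-1}<sb^r_{k-1}$; combined with $s\le r$ this yields the \emph{strict} inequality $b^s_{k-1}<b^r_{k-1}$. The new lower bound then follows routinely from the inductive lower bound with a slack of $s\la^{-1}$. However, preserving the upper bound after $b^s$ jumps up by $\la^{-1}$ requires the pre-jump gap $b^r_{k-1}-b^s_{k-1}$ to be \emph{at least} $\la^{-1}$, and a mere strict inequality is not enough. This is the main obstacle, and the key observation that resolves it is that both $b^r_{k-1}$ and $b^s_{k-1}$ are integer multiples of $\la^{-1}$, so any strict inequality between them is automatically a gap of size at least $\la^{-1}$. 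With this granularity remark in hand, the upper bound propagates, and the induction closes.
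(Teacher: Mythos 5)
Your proposal is correct and follows essentially the same route as the paper: induction along arrival times, characterizing each increment via the threshold event $U_k\ge B_{t_k-}(W)/\cdot$, and using the $\la^{-1}$ granularity to close the delicate case. The only cosmetic difference is that the paper organizes the argument as a proof by contradiction (assuming the inequality fails at $t_i$ and showing the required interval for $U_i$ is empty), so the granularity observation you single out explicitly appears there implicitly in the claim that a violation at $t_i$ is ``only possible if $B_{t_{i-1}}^{\la,s}(W)=B_{t_{i-1}}^{\la,r}(W)$.''
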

\begin{proof}
	We prove both claims by induction on the arrival time. Hence, we assume that the desired inequalities are valid up to time $t_{i-1}$. Now, suppose that at time $t_i$ we had 
	$$B_{t_i}^{\la,s}(W) > B_{t_i}^{\la,r}(W).$$
	This is only possible if $B_{t_{i-1}}^{\la,s}(W) = B_{t_{i-1}}^{\la,r}(W)$ and for the $i$'th choice variable $U_i$, drawn at time $t_i$, we have
	$U_i \in[s^{-1}B_{t_{i-1}}^{\la,s}(W), r^{-1}B_{t_{i-1}}^{\la,r}(W)]$.
	But since $r \le s$, this is impossible. 
	
	Similarly, assume that at time $t_i$ we had 
	$$\tfrac sr (B_{t_i}^{\la,r}(W) -\la^{-1}) > B_{t_i}^{\la,s}(W).$$
This is only possible if for the $i$'th choice variable $U_i \in[r^{-1}B_{t_{i-1}}^{\la,r}(W), s^{-1}B_{t_{i-1}}^{\la,s}(W)]$,
	so that 
	$ \tfrac{s}{r}B_{t_{i-1}}^{\la,r}(W) \le B_{t_{i-1}}^{\la,s}(W)$.
	But this implies that 
	$$\tfrac sr (B_{t_i}^{\la,r}(W) -\la^{-1}) = \tfrac sr B_{t_{i-1}}^{\la,r} (W)\le B_{t_{i-1}}^{\la,s}(W) = B_{t_{i}}^{\la,s}(W),$$
	yielding the desired contradiction.
\end{proof}

\begin{proposition}\label{RelayAppr}
	The families of measure-valued processes $\G^\la$ and $\g(\Lla,1)$ are $\Vert\cdot\Vert$-exponentially equivalent.
\end{proposition}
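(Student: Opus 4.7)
The approach is to couple $\G^\la = \g(\Lla, r_\la)$ and $\g(\Lla, 1)$ on the same probability space via the shared Poisson process $X^\la$ together with its choice marks $(U_i)_i$, and then to control the $\|\cdot\|$-distance by counting the transmitters on which the two schemes disagree. Write $\tau_i^r := B^{\la,r}_{t_{i-1}}(W)/r$ for the normalized occupancy threshold just before time $t_i$. Under the coupling, the $i$th transmitter is classified satisfied by one process but frustrated by the other exactly when $U_i$ falls into the interval $I_i := [\tau_i^{r_\la},1] \triangle [\tau_i^{1},1]$. Let $D^\la_i$ be the indicator of this event and write $D^\la = \sum_i D^\la_i$. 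Since each disagreement contributes a signed point mass of magnitude $\la^{-1}$ and the disagreement count is monotone in $t$,
$$\|\G^\la - \g(\Lla, 1)\| = \|B^{\la,r_\la} - B^{\la,1}\| \le \la^{-1}D^\la.$$

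Next I bound the Lebesgue measure of $I_i$ using Lemma~\ref{relayDomLem}. Assume without loss of generality that $r_\la \le 1$ (the case $r_\la \ge 1$ is symmetric with $s$ and $r$ swapped in the lemma). Then, applied at time $t_{i-1}$, the lemma reads $r_\la(B^{\la,1}_{t_{i-1}}(W) - \la^{-1}) \le B^{\la,r_\la}_{t_{i-1}}(W) \le B^{\la,1}_{t_{i-1}}(W)$, and dividing by $r_\la$ yields $\tau_i^1 - \la^{-1} \le \tau_i^{r_\la} \le \tau_i^1/r_\la$. Combining with $\tau_i^1 \le 1$ gives the uniform bound $|\tau_i^{r_\la} - \tau_i^1| \le p_\la$, where $p_\la := \max(\la^{-1}, (1-r_\la)/r_\la)$. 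Since $l_\la$ converges weakly on the compact set $W$ to the probability measure $\mu_{\ms R}$, we have $r_\la = l_\la(W) \to \mu_{\ms R}(W) = 1$, and therefore $p_\la \to 0$ as $\la \uparrow \infty$.

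To conclude, I apply the previsibility trick behind Lemma~\ref{Previsible}. The interval $I_i$ is measurable with respect to the $\sigma$-algebra generated by $(X_j, T_j, U_j)_{j<i}$ together with $(X_i, T_i)$, and its Lebesgue measure is at most $p_\la$. Iteratively conditioning on the last choice variable and replacing each $\mathbf{1}\{U_i \in I_i\}$ by $\mathbf{1}\{U_i \le p_\la\}$ shows that, conditional on $|X^\la|=n$, $D^\la$ is stochastically dominated by $\sum_{i=1}^n \mathbf{1}\{U_i \le p_\la\}$. Independent thinning then gives unconditional domination of $D^\la$ by a Poisson random variable with parameter $\la\mu_{\ms T}(V) p_\la$. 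The Poisson concentration inequality finally yields
$$\la^{-1}\log \P(\|\G^\la - \g(\Lla, 1)\| > \de) \le -h(\de \,|\, \mu_{\ms T}(V) p_\la)$$
for all sufficiently large $\la$, and the right-hand side tends to $-\infty$ since $p_\la \to 0$.

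The main technical obstacle is the careful execution of the previsibility step: as in the proof of Lemma~\ref{Previsible}, one must peel off the conditioning on $(U_j)_{j \le n}$ one index at a time, using that on any event fixing $(U_j)_{j<i}$ the interval $I_i$ becomes a deterministic interval of length at most $p_\la$ so it can be uniformly replaced by $[0,p_\la]$.
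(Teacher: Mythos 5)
Your proposal is correct and relies on the same two pillars as the paper's proof---Lemma~\ref{relayDomLem} and the previsibility/Poisson-domination idea from Lemma~\ref{Previsible}---but it reorganizes the estimate in a noticeably cleaner way. The paper bounds $\P(\Vert B^{\la,r_\la}-B^{\la,1}\Vert > \e)$ by introducing a free half-width parameter $\r>0$ and splitting into the event that the disagreement strip $[\tfrac{B_{t-}^{\la,r_\la}(W)}{r_\la},1]\D[B_{t-}^{\la,1}(W),1]$ is contained in a strip of half-width $\r$ around $B^{\la,1}_{t-}(W)$ and its complement; the complement event is then controlled by a further two-term decomposition invoking Lemma~\ref{relayDomLem} and Lemma~\ref{AbsoluteContinuity}(3), and exponential equivalence only emerges after taking the infimum over $\r$ at the end. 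You instead observe that Lemma~\ref{relayDomLem}, after dividing by the relevant relay count, already gives a \emph{deterministic} almost-sure bound $|\tau_i^{r_\la}-\tau_i^1|\le p_\la$ with $p_\la=\max(\la^{-1},|1-r_\la|/r_\la)\to 0$, so the disagreement interval is surely contained in a shrinking strip. This removes the case split and the free parameter entirely: one application of the previsibility peeling with threshold $p_\la$ yields domination by a $\mathrm{Pois}(\la\mu_{\ms T}(V)p_\la)$ variable, and the superexponential decay is immediate since $p_\la\to0$. The argument is sound (in particular the bound $\Vert B^{\la,r_\la}-B^{\la,1}\Vert\le\la^{-1}D^\la$ and the monotone-peeling step are correct; the only cosmetic point is that $p_\la$ should be written with $|1-r_\la|$ to cover $r_\la>1$ uniformly, which you effectively do via the symmetry remark), and it buys a shorter, more transparent proof at no cost in generality.
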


\begin{proof}
Let $\e>0$ be arbitrary.	By the definition of exponential equivalence \cite[Definition 4.2.14]{dz98} and the identity $\Vert \g(\Lla, \rla) - \g(\Lla, 1)\Vert =\Vert B^{\la,r_\la}- B^{\la,1}\Vert$,  we need to check that
\begin{align*}
\limsup_{\la\uparrow\infty}\la^{-1}\log\P(\Vert B^{\la,r_\la}- B^{\la,1}\Vert > \e)=-\infty.
\end{align*}
Arguing similarly as in the proof of Proposition~\ref{ExpEquiv}, for any $\r>0$, the following estimate holds, 
\begin{equation}\label{Est4}
\begin{split}
&\P(\Vert B^{\la,r_\la}- B^{\la,1}\Vert > \e)\le \P\Big(\int_0^{\TF}\Lla(\d t,[\tfrac{B_{t-}^{\la,r_\la}(W)}{r_\la},1]\D[B_{t-}^{\la,1}(W),1], W) > \e\Big)\cr
&\le\P\Big(\int_0^{\TF}\Lla(\d t,[B_{t-}^{\la,1}(W)-\r, B_{t-}^{\la,1}(W) + \r], W) > \e\Big) + \P(\Vert B^{\la,1}(W)-\tfrac{B^{\la,r_\la}(W)}{r_\la}\Vert > \r).
\end{split}
\end{equation}
For the first summand on the r.h.s.~of \eqref{Est4} we can use Lemmas~\ref{Previsible} and~\ref{AbsoluteContinuity} part 3. 
For the second summand on the r.h.s.~of \eqref{Est4} we can further estimate
\begin{align*}
\P(\Vert B^{\la,1}(W)-\tfrac{B^{\la,r_\la}(W)}{r_\la}\Vert > \r)\le\P(\Vert B^{\la,1}(W)-B^{\la,r_\la}(W)\Vert > \r/2)+\P(B_{\TF}^{\la,r_\la}(W) > \tfrac{\r r_\la}{2|1-r_\la|}).
\end{align*}
As for the second summand using Lemma~\ref{AbsoluteContinuity} part 3
we arrive at the desired limit. For the first summand, Lemma~\ref{relayDomLem}, implies that 
	\begin{align*}
		\P(\Vert B^{\la, r_\la}(W) - B^{\la, 1}(W)\Vert > \r/2) \le \P(B_{\TF}^{\la, 1 \vee r_\la}(W) > \tfrac{\r/2 - \la^{-1}}{|1-r_\la|}),
	\end{align*}
so that the desired limit follows from Lemma~\ref{AbsoluteContinuity} part 3.
\end{proof}

\begin{proof}[Proof of Proposition~\ref{ProjLimLDP_1}]
We first use Proposition~\ref{RelayAppr} to remove the $\la$-dependence in the relays. To conclude the proof, we then apply the $\tau$-topology version of the exponential approximation machinery~\cite[Theorem 1.13]{eiSchm}. The conditions are satisfied according to the Propositions~\ref{ExpEquiv}, \ref{DZApprox}, \ref{ContractionDiscr} and \ref{SanovDiscrete}.
\end{proof}

\subsection{Dawson-G\" artner for the temporal component}
In order to derive the LDP for the product topology in the time dimension, we use Proposition~\ref{ProjLimLDP_1} and apply the Dawson-G\"artner theorem \cite[Theorem 4.6.1]{dz98}.
\begin{proof}[Proof of Proposition~\ref{ProjLimLDP_2}]
Consider time-discretizations ${\bf t}=\{(t_0,\dots,t_n):\, 0=t_0<t_1<\cdots<t_n=\TF\}$ and the associated projections 
 $p_{\bf t}(\g)=(\g_{t_i})_{t_i\in{\bf t}}$. The family of all such projections $J$ has a partial ordering induced by inclusion in the family of discretizations $\bf t$. 
Using Proposition~\ref{ProjLimLDP_1} and \cite[Theorem 4.6.1]{dz98}, $\G^\la$ satisfies the LDP in the product topology with good rate function given by 
$$\tilde I(\g)=\sup_{{\bf t}\in J}I_{\bf t}(p_{\bf t}(\g))\quad\text{ where }\quad I_{\bf t}((\g_{t_i})_{t_i\in{\bf t}})=\inf_{\nu\in\MM:\, (\g_{t_i}(\nu))_{t_i\in{\bf t}}=(\g_{t_i})_{t_i\in{\bf t}}}h(\nu|\mu_{\ms T}).$$ 
The proof is finished once we show that $\tilde I(\g)=I(\g)$ where $I(\g)=\inf_{\nu\in\MM:\, \g(\nu)=\g}h(\nu|\mu_{\ms T})$.

\medskip
We first prove $I\ge\tilde I$. Let $\nu'\in\MM$ be such that $\g(\nu')=\g$. Then, in particular $\g_{t_i}(\nu'))_{t_i\in{\bf t}}=(\g_{t_i})_{t_i\in{\bf t}}$ for any time-discretization $\bf t$ and
\begin{align*}
\inf_{\nu\in\MM:\, (\g_{t_i}(\nu))_{t_i\in{\bf t}}=(\g_{t_i})_{t_i\in{\bf t}}}h(\nu|\mu_{\ms T})\le h(\nu'|\mu_{\ms T}),
\end{align*}
so that $\tilde I\le I$.

\medskip
For the other direction, $I\le\tilde I$, first assume that $\g$ is discontinuous in the sense that there exists a measurable set $A\subset W$ and some $t_d\in[0,\TF]$ such that there exists a sequence $t_n\to t_d$ with $\lim_{n\to\infty}\g_{t_n}(A)\neq\g_{t_d}(A)$. Then, we show that $\tilde I(\g)=\infty$ which trivially implies the inequality. Indeed, consider the sequence of time-partitions ${\bf{t}}_n=\{0<t_n<t_d<t_{\TF}\}$. Then, there exists a sequence $\nu_n\in\MM$ such that  
\begin{align*}
\tilde I(\g)&\ge \limsup_{n\uparrow\infty}\inf_{\nu\in\MM:\, (\g_{t_i}(\nu))_{t_i\in{\bf t}_n}=(\g_{t_i})_{t_i\in{\bf t}_n}}h(\nu|\mu_{\ms T})\cr
&\ge \limsup_{n\uparrow\infty}\inf_{\nu\in\MM:\, (\g_{t_i}(\nu)(A))_{t_i\in{\bf t}_n}=(\g_{t_i}(A))_{t_i\in{\bf t}_n}}h(\nu|\mu_{\ms T})\ge\limsup_{n\uparrow\infty} h(\nu_n|\mu_{\ms T})-\e
\end{align*}
and $\g_{t}(\nu_n)(A)=\g_{t}(A)$ for all $t\in{\bf t}_n$. Moreover, setting $A_n=(t_n,t_d]\times[0,1]\times A$
then $\lim_{n\uparrow\infty}\mu_{\ms T}(A_n)=0$ since $\mu_{\ms T}^{\ms t}$ is absolutely-continuous w.r.t.~the Lebesgue measure. On the other hand, by assumption, there exists an $\e>0$ such that
\begin{align*}
\e<|\g_{t_n}(A)-\g_{t_d}(A)|=|\g_{t_n}(\nu_n)(A)-\g_{t_d}(\nu_n)(A)|
\end{align*}
for sufficiently large $n$. Hence $\nu_n(A_n)>\e/2$, so that Lemma~\ref{AbsoluteContinuity} part 2 yields $\tilde I(\g)=\infty$.

It remains to consider the setting, where $\g$ is continuous and $\tilde I(\g)<\infty$. Let ${\bf t}_\de$ denote a finite partition of $[0,\TF]$ with mesh size smaller than $\de>0$. Then again, there exists a sequence $\nu_\de\in\MM$ such that  
\begin{align*}
\tilde I(\g)&\ge \limsup_{\de\downarrow0}\inf_{\nu\in\MM:\, (\g_{t_i}(\nu))_{t_i\in{\bf t}_\de}=(\g_{t_i})_{t_i\in{\bf t}_\de}}h(\nu|\mu_{\ms T})\ge \limsup_{\de\downarrow0}h(\nu_\de|\mu_{\ms T})-\e
\end{align*}
and $\g_t(\nu_\de)=\g_t$ for all $t\in {\bf t}_\de$. Since the levelsets of $h(\cdot|\mu_{\ms T})$ are sequentially compact in the $\tau$-topology, there exists a $\tau$-accumulation point $\nu_*$ for $(\nu_\de)_\de$ and by the time-continuity of $\g$ and the continuity of $\nu\mapsto\g(\nu)$ along sequences of measures with uniformly bounded entropy, we have $\g_t(\nu_*)=\g_t$ for all $t\in[0,\TF]$. Moreover, by the lower semicontinuity of $h(\cdot|\mu_{\ms T})$ we have $ \limsup_{\de\downarrow0}h(\nu_\de|\mu_{\ms T})\ge  h(\nu_*|\mu_{\ms T})\ge I(\g)$. This finishes the proof.
\end{proof}

\section{Proof of Theorem~\ref{LDP_Spatial} }
\label{thm2Sec}
For the proof of Theorem~\ref{LDP_Spatial} we construct spatially approximating processes by replacing $\k$ with a carefully chosen step function. As for the time approximation considered in the proof of Theorem~\ref{LDP_NoSpatial}, the strongly regularizing property of the differential equation allows us to verify that again the approximation is uniformly close and exponentially approximates the original process.  This reveals a striking methodological similarity between the space approximations appearing in the proof of Theorem~\ref{LDP_Spatial} and the time approximations considered in Section~\ref{thm1Sec}.
To implement this program, we first need to overcome the technical obstacle that the step functions still depend on the empirical relay process. In particular, Theorem~\ref{LDP_NoSpatial} cannot yet be applied. A preliminary step is therefore to replace the relay process by its limiting measure and show that the error made is exponentially small.

\subsection{Exponential equivalence w.r.t.~the relay process}
Let $W^\de=\{W_1,\dots,W_k\}$ be a partition of $W$ into cubes of side length $\de$. If $W$ is not a cube itself, then the $W_i$ are defined as the intersection of the smaller cubes with $W$. The idea is to partition the transmitter process into independent processes, each process confined to choose relays in a given spatial discretization. More precisely, recalling \eqref{kappa},
let $Z^{\la,i}(\nu_{\ms R})$ denote the Poisson point process with intensity measure
\begin{align*}
	\mu^i(\nu_{\ms R})(\d s, \d u,\d x)=\k_{\nu_{\ms R}}(W_i|x)\mu_{\ms T}(\d s, \d u, \d x)
\end{align*}
and let $\Lla^i(\nuR)$ be the associated empirical measure. In other words, $Z^{\la,i}(\nuR)$ is the Poisson point process of transmitters choosing a relay in $W_i$. Now, consider an associated augmented empirical measure given by 
$$\LLL_\la^\de(\nuR)=\sum_{i = 1}^k\LLL_\la^{i}(\nuR)$$
where 
$$\LLL_\la^i(\nuR)=L_\la^{i}(\nuR)\otimes1_{W_i}\frac{\nuR}{\nuR(W_i)}.$$
Note that $\LLL_\la^\de(\nuR)(\d t,\d u,\d x, W_i)=L_\la^{i}(\nuR)(\d t,\d u,\d x)$, so that the total mass of transmitters pointing into $W_i$ is preserved. However, within $W_i$ this mass is now distributed according to $\nuR$ conditioned on $W_i$. In particular, the kernel $y \mapsto \LLL_\la^\de(\nuR)_y$ appearing in \eqref{MidSolution} is constant on $W_i$, where it is given by $\nuR(W_i)^{-1}L_\la^{i}(\nuR)$.
Thus, 
\begin{align}
	\label{localiEq}
	\int_{W_i}\g(\LLL_\la^\de(\nuR)_y) \nuR(\d y) = \int_{W_i}\g(\nuR(W_i)^{-1}L_\la^{i}(\nuR)) \nuR(\d y) = \g(L_\la^{i}(\nuR),\nuR(W_i)),
\end{align}
where we recall the more detailed notation $\g(\cdot,\cdot)$ from the paragraph preceding Lemma~\ref{relayDomLem}.
In the proof of Theorem~\ref{LDP_Spatial} this identification is an essential ingredient to establish a connection to the setting of Theorem~\ref{LDP_NoSpatial}.

\medskip
As a first step, we show that it is possible to switch between $\nuR=l_\la$ and $\nuR=\muR$ without changing substantially the approximating process of frustrated transmitters.
\begin{proposition}\label{ExpEquiv_Spatial}
The family of measure-valued processes $\g(\mathfrak \LLL^\de_\la(\lla),\lla)$ is $\Vert\cdot\Vert$-exponentially equivalent to $\g(\mathfrak \LLL^\de_\la(\muR),\muR)$.
\end{proposition}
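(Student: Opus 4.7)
The exponential equivalence amounts to showing that $\la^{-1}\log\P(\Vert\g(\LLL_\la^\de(\lla),\lla)-\g(\LLL_\la^\de(\muR),\muR)\Vert>\e)\to-\infty$ for every $\e>0$. I would exploit the patch decomposition \eqref{localiEq}, which writes each of the two processes as a sum over the finitely many cubes $W_i$ of independent local contributions $\g(L_\la^i(\nuR),\nuR(W_i))$. A union bound over $i\le k$ then reduces the task to comparing, for a single patch $W_i$, the process $\g(L_\la^i(\lla),\lla(W_i))$ with $\g(L_\la^i(\muR),\muR(W_i))$.

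For a fixed patch $W_i$, I would couple the Poisson point processes $Z^{\la,i}(\lla)$ and $Z^{\la,i}(\muR)$ on a common probability space by thinning from a dominating intensity. The number of points on which the coupling disagrees is stochastically bounded by a Poisson random variable with parameter
\[
\Psi_\la^i=\la\,\TF\int|\k_{\lla}(W_i|x)-\k_{\muR}(W_i|x)|\,\mu_{\ms T}^{\ms s}(\d x).
\]
Using the assumption that $\k$ is $\mu_{\ms T}^{\ms s}\otimes\muR$-almost-everywhere continuous together with the uniform lower bound on $\int\k(x,z)\lla(\d z)$ for large $\la$, dominated convergence yields $\k_{\lla}(W_i|x)\to\k_{\muR}(W_i|x)$ in $L^1(\mu_{\ms T}^{\ms s})$, hence $\Psi_\la^i/\la\to 0$. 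Lemma~\ref{AbsoluteContinuity}(3) converts this into the required super-exponential decay of the probability that the coupling disagrees on more than $\e\la$ points.

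Next I would translate this coupling into a bound on the discrepancy between the solutions of \eqref{DE_Cont_r}. Since the integral operator \eqref{IntOp} is monotone, the supremum-norm change caused by adding or removing $N$ atoms in the driving empirical measure is at most $\la^{-1}N$; and the dependence on the relay normalization parameter is controlled by the deterministic comparison behind Lemma~\ref{relayDomLem}, giving an error at most of order $|r-r'|\cdot\b_{\TF}$. Choosing the partition $\{W_1,\dots,W_k\}$ so that each $\partial W_i$ has vanishing Lebesgue measure (allowed since $\muR\ll\mathrm{Leb}$), the weak convergence $\lla\to\muR$ gives $\lla(W_i)\to\muR(W_i)$, so this second contribution is deterministically $o(1)$. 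Assembling the two estimates and the union bound over $i$ finishes the argument.

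The main obstacle is the first step: controlling the total-variation distance between the intensity measures $\mu^i(\lla)$ and $\mu^i(\muR)$ on the sub-exponential scale $o(\la)$. Because $\k$ is only assumed to be continuous $\mu_{\ms T}^{\ms s}\otimes\muR$-a.e., one cannot expect uniform convergence of $\k_{\lla}(W_i|x)$ in $x$, and the argument must route through integrability against $\mu_{\ms T}^{\ms s}$ together with the uniform-in-$x$ positivity of $\int\k(x,z)\lla(\d z)$ provided by the hypothesis following~\eqref{kappa}. Once this step is secured, the remainder proceeds in close analogy with Proposition~\ref{RelayAppr}.
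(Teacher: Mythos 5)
Your plan follows essentially the same route as the paper: patch decomposition via \eqref{localiEq}, a Poisson coupling to control the kernel discrepancy $\k_{\lla}(W_i|\cdot)$ versus $\k_{\muR}(W_i|\cdot)$ in $L^1(\mu^{\ms s}_{\ms T})$ using a.e.\ continuity of $\k$ plus the uniform lower bound on $\int\k(x,z)\lla(\d z)$, a Lipschitz estimate transferring the coupling discrepancy to the process of frustrated users, and a separate handling of the normalization $\lla(W_i)$ versus $\muR(W_i)$ through the Proposition~\ref{RelayAppr} machinery. The paper organizes this by explicitly introducing the mixed augmented measure $\LLL_\la^\de(\muR,\lla)$ as an intermediate pivot (Proposition~\ref{ExpEquiv_Spatial_0}): the normalization change is dealt with first, using the fact that $\LLL^\de_\la(\muR,\lla)(\cdot,W_i)$ and $\LLL^\de_\la(\muR)(\cdot,W_i)$ are the \emph{same} empirical measure so that Proposition~\ref{RelayAppr} applies verbatim per patch, and the kernel change is then carried out in the coupled state space of Section~\ref{couplDiscSec} via Lemma~\ref{contrLem} and Corollary~\ref{expAppCor}, with the intensity bound \eqref{intBoundEq1}. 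Your version collapses these two stages, which is fine in principle, but two of your intermediate claims are stated too loosely: (a) the ``at most $\la^{-1}N$'' bound for adding/removing $N$ atoms is exactly what Lemma~\ref{contrLem} is for (and the correct constant is $2$, not $1$, for non-nested measures); and (b) Lemma~\ref{relayDomLem} by itself only controls the scalar mass $B^{\la,r}(W)-B^{\la,s}(W)$, not the full $\Vert\cdot\Vert$-distance over all measurable sets --- the upgrade requires the previsibility argument of Lemma~\ref{Previsible} as packaged in Proposition~\ref{RelayAppr}, which you do gesture at but present as if it were a deterministic estimate. Also a small slip: the Poisson parameter $\Psi_\la^i$ should carry the factor $\mu^{\ms t}_{\ms T}([0,\TF])=1$ rather than $\TF$ (the arrival-time distribution is normalized). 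None of these is a fatal gap, but all of them are precisely the places the paper invests in intermediate lemmas, so they deserve explicit treatment rather than a sketch.
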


Next we show that $\g(\mathfrak \LLL^\de_\la(\lla),\lla)$ is an exponentially good approximation to $\G^\la$.
\begin{proposition}\label{ExpEquiv_Spatial_2}
The family of measure-valued processes $\g(\mathfrak \LLL^\de_\la(\lla),\lla)$ is an $\Vert\cdot\Vert$-exponentially good approximations of $\G^\la$.
\end{proposition}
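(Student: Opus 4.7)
The plan is to couple $\G^\la=\g(\LLL_\la,l_\la)$ with $\g(\LLL_\la^\de(l_\la),l_\la)$ using shared randomness and to estimate the discrepancy in the spirit of Proposition~\ref{ExpEquiv} and Lemma~\ref{Previsible}. Since the cube-targeting intensity $\k_{l_\la}(W_k|x)\mu_{\ms T}(\d t,\d u,\d x)$ of $Z^{\la,k}(l_\la)$ coincides with the $W_k$-marginal of $\LLL_\la$, both processes can be driven by a single Poisson point process of arrivals and cube targets, augmented by an auxiliary family of uniform variables $\tilde U_i\in[0,1]$ used to select the specific relay within the chosen cube. The only discrepancy between $\G^\la$ and $\g(\LLL_\la^\de(l_\la),l_\la)$ sits at this last step: in $\G^\la$, $\tilde U_i$ selects a relay through the CDF of the $\k$-proportional distribution on $\{y_j\in W_k\}$, whereas in the approximation the same $\tilde U_i$ selects a relay via the uniform CDF with respect to $l_\la\rest W_k$.

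To control this discrepancy I would exploit the $\mu^{\ms s}_{\ms T}\otimes\muR$-a.e.~joint continuity of $\k$ together with $l_\la\to\muR$ weakly. For every $\eta>0$ there is a compact continuity set $K_\eta\subset W\times W$ with $(\mu^{\ms s}_{\ms T}\otimes\muR)((W\times W)\setminus K_\eta)<\eta$ on which $\k$ is uniformly continuous. For $\de$ small enough, the oscillation of $\k$ on every product cube $W_i\times W_k$ meeting $K_\eta$ is $O(\eta)$, so the total variation between the $\k$-proportional and uniform distributions on $\{y_j\in W_k\}$ is $O(\eta)$; moreover the $l_\la$-mass of cube pairs missing $K_\eta$ is itself $O(\eta)$ with overwhelming probability. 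With the busy relays put first in a common ordering of $\{y_j\in W_k\}$, the conditional probability, given the past of the coupling, that $\tilde U_i$ picks a busy relay in one process and an idle one in the other is bounded by $O(\eta)+|b_k^\la-\tilde b_k^\la|/m_k$, where $m_k=|Y^\la\cap W_k|$ and $b_k^\la,\tilde b_k^\la$ denote the current busy counts in $\G^\la$ and in the approximation respectively.

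A previsibility argument as in Lemma~\ref{Previsible}, combined with a Gronwall-type closure on the per-cube state mismatch $|b_k^\la-\tilde b_k^\la|$, then dominates the total number of per-transmitter mismatches by a Poisson random variable with parameter proportional to $\eta\la$. Each mismatch contributes at most $\la^{-1}$ to the $\Vert\cdot\Vert$-distance between $\G^\la$ and $\g(\LLL_\la^\de(l_\la),l_\la)$; summing over the finitely many cubes and invoking Lemma~\ref{AbsoluteContinuity} part~3 converts the Poisson concentration into the required super-exponential decay of $\P(\Vert \G^\la-\g(\LLL_\la^\de(l_\la),l_\la)\Vert>\e)$ after sending first $\la\uparrow\infty$ and then $\de\downarrow 0$, where $\eta$ is chosen to vanish together with $\de$.

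The main obstacle is the Gronwall step: the exact process $\G^\la$ reacts to the \emph{identities} of the busy relays within a cube (since $\k$ varies there), whereas the approximation only records per-cube busy \emph{counts}. Decoupling these two sources of information and showing that a few per-transmitter mismatches do not cascade into $O(1)$-deviations in the sup norm is the crux of the argument, and this is where the uniform $O(\eta)$-control on the oscillation of $\k$ must be combined carefully with the monotone coupling structure already used in the proof of Proposition~\ref{Mark}.
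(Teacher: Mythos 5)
Your proposal identifies the right source of discrepancy (within-cube relay selection), but the route you sketch differs substantially from the paper's and has a genuine gap exactly where you flag it. The paper does not track per-cube busy counts $b_k^\la,\tilde b_k^\la$ and never needs a Gronwall-type closure. Instead it relies on two ingredients you are missing.

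First, it realizes both $\G^\la$ and $\g(\LLL^\de_\la(\lla),\lla)$ as deterministic functionals of Poisson point processes on $[0,\TF]\times W\times Y^\la$ obtained by taking sub-level-set restrictions (w.r.t.\ an extra real mark) of one fixed ``master'' Poisson process $Z^{\la,*}$, with threshold functions $\k_{\lla}(y|x)$ and $\k^\de_{\lla,\lla}(y|x)$ respectively (Section~\ref{couplDiscSec}). With this coupling, the total number of points on which the two inputs disagree is a Poisson variable with mean $\la|\k_{\lla}-\k^\de_{\lla,\lla}|_{\mula}$, which is exactly the quantity controlled in equation~\eqref{intBoundEq2} via weak convergence of $\lla$ and the Lebesgue density theorem (Lemma~\ref{intBoundLem}). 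This is cleaner than your proposed common-arrival-plus-$\tilde U_i$ coupling, where the within-cube CDF for the true process depends on the transmitter location $x$, making the quantile inversion awkward.

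Second---and this is the crucial missing idea---Lemma~\ref{contrLem} shows that $\nu\mapsto\g(\nu)$ is $2$-Lipschitz in $\Vert\cdot\Vert$ on simple counting measures on $[0,\TF]\times W\times Y^\la$: adding or removing a single transmitter-request atom changes the frustrated-user process by at most one atom (the added request itself if its target relay was already busy, or else the first subsequent request aimed at the same relay). This local combinatorial statement is what kills the cascade you correctly worry about: it converts ``number of mismatched input points'' directly into a bound on $\Vert\G^\la-\g(\LLL^\de_\la(\lla),\lla)\Vert$ with no need to propagate busy-count errors through time. Without this lemma, your Gronwall step would face the real danger that a single mismatched relay selection frees/occupies a different relay, causing a different later transmitter to flip, and so on; ruling out $O(1)$ accumulation of such flips by a naive Gronwall bound on $|b_k^\la-\tilde b_k^\la|$ alone is not straightforward, because the per-step difference is not controlled by the current state difference in any linear way. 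So your plan, as written, does not close.
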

Combining this with a uniform bound on the spatial discretization, we arrive at the following multivariate LDP.
\begin{proposition}\label{ProjLimLDP_3}
Let $0\le t_1\le\cdots\le t_k\le\TF$, then the family of random measures $\{\G_{t_i}^\la(\d x)\}_{i}$ satisfies the LDP in the $\tau$-topology with good rate function $$I((\g_{t_i})_{i})=\inf_{\nn\in \MM':\, (\g_{t_i}(\nn,\muR))_{i}=(\g_{t_i})_{i}}h(\nn|\mu(\muR)).$$
\end{proposition}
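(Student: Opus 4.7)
The plan is to mirror the proof of Proposition~\ref{ProjLimLDP_1} as closely as possible, but with the spatial discretization $W^\de$ playing a role analogous to the temporal discretization parameter used earlier. The starting point is to chain Propositions~\ref{ExpEquiv_Spatial_2} and~\ref{ExpEquiv_Spatial}: together they provide that $\g(\mathfrak{\LLL}_\la^\de(\muR),\muR)$ is a $\Vert\cdot\Vert$-exponentially good approximation of $\G^\la$, uniformly in $\la$ as $\de\downarrow 0$. This reduces the task to producing, for each fixed $\de>0$, a finite-dimensional LDP for the approximating process and then invoking the exponential approximation machinery of~\cite[Theorem~1.13]{eiSchm} to lift it to $\G^\la$.

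For fixed $\de>0$, I would use the key identification~\eqref{localiEq}: by construction the processes $Z^{\la,i}(\muR)$, $1\le i\le k$, are independent Poisson point processes, and $\g_t(\mathfrak{\LLL}_\la^\de(\muR),\muR)$ decomposes additively as
\[
\g_t(\mathfrak{\LLL}_\la^\de(\muR),\muR)(\d x)=\sum_{i=1}^k\Big(L_\la^i(\muR)([0,t],[0,1],\d x)-\muR(W_i)\,\b_t\big(\muR(W_i)^{-1}L_\la^i(\muR)\big)(\d x)\Big).
\]
Each summand falls under the scope of Theorem~\ref{LDP_NoSpatial} with transmitter intensity $\mu^i(\muR)$ and normalized relay count $\muR(W_i)$. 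Applying the finite-dimensional version (Proposition~\ref{ProjLimLDP_1}) to each summand, then combining by independence, yields the joint LDP for the $k$-tuple; contraction along the continuous summation map in the $\tau$-topology gives the LDP for $\g(\mathfrak{\LLL}_\la^\de(\muR),\muR)$ with rate function
\[
I^\de((\g_{t_j})_j)=\inf\Big\{\sum_{i=1}^k h(\nu^i|\mu^i(\muR))\Big\},
\]
where the infimum ranges over decompositions compatible with the fixed values $(\g_{t_j})_j$.

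Combining these LDPs with the approximation result via~\cite[Theorem~1.13]{eiSchm} yields an LDP for $\{\G^\la_{t_i}\}_i$ with some rate function $\tilde I$ obtained through the standard $\liminf_{\de\downarrow 0}$ prescription. The main work—and the principal obstacle—is then to identify $\tilde I$ with $I((\g_{t_i})_i)=\inf_{\nn:\,(\g_{t_i}(\nn,\muR))_i=(\g_{t_i})_i}h(\nn|\mu(\muR))$. For the upper bound on $\tilde I$ one takes any candidate $\nn$ of finite entropy and constructs a discretized version $\nn^\de$ by averaging the kernel $\nn_y$ over each patch $W_i$ (mimicking the construction of $\mathfrak{\LLL}_\la^\de$); the joint continuity assumption on $\k$ together with absolute continuity of $\muR$ ensures that $h(\nn^\de|\mu^\de(\muR))\to h(\nn|\mu(\muR))$ and $\g(\nn^\de,\muR)\to\g(\nn,\muR)$. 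For the lower bound one uses sequential compactness of the $h(\cdot|\mu(\muR))$-sublevel sets in the $\tau$-topology, extracts an accumulation point $\nn_*$ of $\de$-optimizers, and combines lower semicontinuity of entropy with a spatial analogue of Propositions~\ref{DZApprox} and~\ref{ContractionDiscr} to ensure $\g_{t_i}(\nn_*,\muR)=\g_{t_i}$.

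The delicate point is the continuity of $\nn\mapsto\g(\nn,\muR)$ along sequences of bounded entropy, which in the spatial setting requires controlling simultaneously the time evolution and the distortion of the preference kernel on small patches; the continuity-a.e.\ assumption on $\k$ is exactly what is needed so that the replacement of $\k$ by its patch-averages becomes negligible as $\de\downarrow 0$, uniformly on entropy sublevel sets. Once this identification of rate functions is carried out, the proposition follows.
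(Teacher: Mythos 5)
Your proposal takes a genuinely different route from the paper. You decompose $\g(\LLL^\de_\la(\muR),\muR)$ into $k$ independent patch-wise processes via~\eqref{localiEq}, apply the no-preference machinery (essentially Proposition~\ref{ProjLimLDP_1}) to each, contract along the summation map, and then attempt to identify the $\de\downarrow0$ limit of the resulting rate functions directly. The paper instead lifts everything to the coupled space $V^*=\itfiww\times[0,\k_\infty]$: a single Sanov-type LDP for measures $\nns\in\MMs$, followed by the contraction maps $\nns\mapsto\g(\nns(\k^\de))$, so that \emph{every} approximation is a contraction of the same reference entropy $h(\cdot|\mus)$. The rate-function identification then reduces to the one-line entropy chain rule~\eqref{rateIdentEq}, $h(\nns|\mus)=h(\nns|\nns_0)+h(\nn|\mu(\muR))$, avoiding entirely the $\liminf$/accumulation-point argument your plan must carry out. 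This coupling is not just bookkeeping; it is what makes the identification tractable.

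The principal gap is precisely that identification, which you flag as ``the main work'' but do not execute. Two pieces are missing. First, your upper bound requires both $h(\nn^\de|\mu^\de(\muR))\to h(\nn|\mu(\muR))$ --- which needs a data-processing/martingale argument and a verification that patch-averaging aligns with the additive form $\sum_i h(\nu^i|\mu^i(\muR))$ coming from independence --- and $\g(\nn^\de,\muR)\to\g(\nn,\muR)$. Second, your lower bound invokes continuity of $\nn\mapsto\g(\nn,\muR)$ along sequences of bounded entropy. Both hinge on a \emph{uniform} spatial approximation estimate on entropy sublevel sets, which is exactly the content of the paper's Lemma~\ref{uniformLem}; you gesture at it (``a spatial analogue of Propositions~\ref{DZApprox} and~\ref{ContractionDiscr}'') without supplying it. That lemma is where the technical work resides: it combines the Lipschitz estimate of Lemma~\ref{scaleBetLem}, the Lebesgue-density argument of Lemma~\ref{intBoundLem}, and the coupling identity~\eqref{tvDefEq}. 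Note also that the convergence $\k^\de\to\k$ in the relevant $L^1$ sense rests on the Lebesgue density theorem, not on the joint continuity of $\k$ as you suggest; the continuity assumption on $\k$ is used for replacing $\lla$ by $\muR$, a separate step. Without Lemma~\ref{uniformLem} and the entropy identity~\eqref{rateIdentEq}, neither the fixed-$\de$ rate function nor its $\de\downarrow0$ limit are actually identified, so the proof is incomplete at its crux.
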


\begin{proof}[Proof of Theorem~\ref{LDP_Spatial}]
Using a slight modification of the proof of Proposition~\ref{ProjLimLDP_2}, 
Proposition~\ref{ExpEquiv_Spatial_2} can be lifted to the same LDP w.r.t.~continuous times in the product topology. In order to finally establish the LDP in the Skorohod topology, the exponential tightness arguments presented in the proof of Theorem~\ref{LDP_NoSpatial} should be applied verbatim.
\end{proof}

\section{Proof of supporting results for Theorem~\ref{LDP_Spatial}}
\label{thm2SupSec}
In the previous section, we announced our plan to prove Theorem~\ref{LDP_Spatial} using exponential approximation techniques. This technique requires us to couple the original process and the approximations in a way such that the probability of a non-negligible deviation decays at super-exponential speed. In the present section, we provide details on the coupling construction and show how it can be used to derive Propositions~\ref{ExpEquiv_Spatial},~\ref{ExpEquiv_Spatial_2} and~\ref{ProjLimLDP_3}.

\subsection{Total variation bounds for frustrated users}
\label{totVarBoundSec}
Since the technique of exponential approximation hinges upon total-variation bounds, it is essential to understand the regularity properties of frustrated users as a function of the input process. The following result shows that the self-regulating property of the defining ODE gives rise to excellent continuity properties of the solutions.  

\medskip
We observe that the construction of the process $\G^\la$ from the Poisson point process $\Zla$ does not make use of the choice components associated with the Poisson points. Indeed, given the knowledge about the precise locations of the chosen relays, there is no longer any uncertainty on the evolution of frustrated transmitters. Hence, more generally, to any finite counting measure $\nu$ on $\itfW \times \Yla$ we can associate a process of frustrated users $\g(\nu)$. 

For instance, let $\Zdla(\nuR)$ denote the Poisson point process $\itfWY$ with intensity measure $\la\mu^\de(\nuR)$ whose density w.r.t.~$\must \otimes \musp \otimes \lla$ is given by
$$\k^\de_{\nuR}(y|x) =  \sum_{i = 1}^k\one\{y\in W_i\} \frac{\k_{\nuR}(W_i|x)}{\lla(W_i)}.$$
Then, $\g(\la^{-1}\Zdla(\nuR))$ coincides in distribution with $\g(\mathfrak \LLL^\de_\la(\nuR),\lla)$. Indeed, having the identity~\eqref{localiEq} at our disposal, we can decompose into the spatial subdomains $W_i$ and then apply Proposition~\ref{Mark} in each of these domains separately.

In the following result, we show that $\g$ is 2-Lipschitz on counting measures.
\begin{lemma}
	\label{contrLem}
	Let $\nu$, $\nu'$ be finite simple counting measures on $\itfW \times \Yla$. Then,
	$$\Vert\g(\nu) - \g(\nu')\Vert \le 2\Vert \nu - \nu'\Vert.$$
\end{lemma}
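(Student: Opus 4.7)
The plan is to decompose the frustrated measure into arrivals minus satisfied and bound each piece separately. For a finite simple counting measure $\nu$ on $\itfW\times\Yla$ one has
\[
\g_t(\nu)(A)=\nu([0,t]\times A\times\Yla)-\b_t(\nu)(A),
\]
where $\b_t(\nu)$ is the counting measure on $W$ of locations of transmitters satisfied by time $t$. The triangle inequality then yields
\[
\|\g(\nu)-\g(\nu')\|\le\sup_{t,A}|(\nu-\nu')([0,t]\times A\times\Yla)|+\|\b(\nu)-\b(\nu')\|,
\]
and the first term is trivially bounded by $\|\nu-\nu'\|$. It therefore suffices to prove the $1$-Lipschitz bound $\|\b(\nu)-\b(\nu')\|\le\|\nu-\nu'\|$ for the satisfied process, whereupon the claimed factor of $2$ falls out.

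To this end, I would reduce to single-point perturbations: since $\nu$ and $\nu'$ are simple, they are connected by a chain of $\|\nu-\nu'\|$ single-point additions or removals, so by the triangle inequality it is enough to establish $\|\b(\nu\pm\delta_p)-\b(\nu)\|\le 1$ for any $p=(t_0,x_0,y_0)\in\itfW\times\Yla$. The key observation is that $\b_t(\nu)$ is supported on the locations of the first claimants of occupied relays; writing $s_\nu(y)=\min\{t:\,(t,\cdot,y)\in\nu\}$ with the convention $\min\emptyset=\infty$, one has
\[
\b_t(\nu)(A)=\sum_{y\in\Yla}\one\{s_\nu(y)\le t,\,x_y^\nu\in A\},
\]
where $x_y^\nu$ denotes the location of the claimant attaining $s_\nu(y)$. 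Adding $\delta_p$ only affects the summand indexed by $y_0$, since for every other relay the quantities $s_\nu(y)$ and $x_y^\nu$ are unchanged.

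A short case analysis then concludes the argument. If $t_0\ge s_\nu(y_0)$, the summand for $y=y_0$ is unaffected and $\b(\nu+\delta_p)=\b(\nu)$. If $t_0<s_\nu(y_0)<\infty$, the new point $p$ displaces the previous first claimant at some location $x_1\in W$, giving
\[
\b_t(\nu+\delta_p)(A)-\b_t(\nu)(A)=\one\{t\ge t_0,\,x_0\in A\}-\one\{t\ge s_\nu(y_0),\,x_1\in A\}.
\]
If $s_\nu(y_0)=\infty$, the difference reduces to $\one\{t\ge t_0,\,x_0\in A\}$. In each case the supremum over $t$ and $A$ is bounded by $1$, because a difference of two $\{0,1\}$-valued indicators has absolute value at most $1$; the removal case $\nu-\delta_p$ is handled symmetrically. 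I do not anticipate any serious obstacle: the argument is entirely combinatorial, and the factor $2$ in the final Lipschitz constant precisely reflects the sum of the one unit coming from the arrival term and the one unit from the swap of a single first claimant. The only mild delicacy is a tie-breaking convention at the measure-zero event $t_0=s_\nu(y_0)$, which may be inherited from the construction of $\G^\la$.
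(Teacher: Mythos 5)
Your structural understanding of the dynamics is correct, but the bookkeeping has two linked errors that make the claimed bound come out wrong.

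First, you assert that $\nu$ and $\nu'$ are connected by a chain of $\|\nu-\nu'\|$ single-point additions or removals. That is not the right count: the chain must pass through the full symmetric difference, which has cardinality $|\nu\setminus\nu'|+|\nu'\setminus\nu|$, whereas the supremum norm on signed counting measures is $\|\nu-\nu'\|=\max\{|\nu\setminus\nu'|,|\nu'\setminus\nu|\}$ (see the analogous identity \eqref{tvZLem}). So the chain has up to $2\|\nu-\nu'\|$ steps, and even granting one atom of drift per step your argument only yields $\|\b(\nu)-\b(\nu')\|\le 2\|\nu-\nu'\|$, which together with the arrival term gives $3\|\nu-\nu'\|$.

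Second, the intermediate claim $\|\b(\nu)-\b(\nu')\|\le\|\nu-\nu'\|$ is in fact false, so this is not a fixable accounting slip. Take two relays $y_1,y_2$, set
$\nu=\{(1,a,y_1),(2,b,y_1)\}$ and $\nu'=\{(2,b,y_1),(3,c,y_2)\}$,
so that $\|\nu-\nu'\|=1$. In $\nu$ only $a$ is satisfied, so $\b_\infty(\nu)=\de_a$, while in $\nu'$ both transmitters succeed and $\b_\infty(\nu')=\de_b+\de_c$; evaluating the difference on $\{b,c\}$ gives $\|\b(\nu)-\b(\nu')\|\ge 2$.

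The underlying issue is that splitting $\g$ as arrivals minus $\b$ and applying the triangle inequality throws away a cancellation: when an added point is satisfied, it contributes $+1$ to the arrival term \emph{and} $+1$ to $\b$ at the same atom, so its net effect on $\g$ is zero. The paper instead works with $\g$ directly and exploits two facts you do not use: (i) $\g$ is monotone under inclusion of supports, so passing to $\nu^\cap=\nu\cap\nu'$ and $\nu^\cup=\nu\cup\nu'$ sandwiches both $\g(\nu)$ and $\g(\nu')$ and reduces the problem to bounding $\|\g(\nu^\cup)-\g(\nu^\cap)\|$; and (ii) a single-point addition changes $\g$, not $\b$, by at most one atom (either the new point itself becomes frustrated, or it displaces the former first claimant of its target relay, who then contributes one new frustrated atom). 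Chaining along $\nu^\cap\subset\cdots\subset\nu^\cup$ gives $\|\g(\nu^\cup)-\g(\nu^\cap)\|\le|\nu\Delta\nu'|\le 2\|\nu-\nu'\|$, which is where the factor $2$ actually comes from. Your first-claimant description and case analysis are essentially the right local picture; you should redirect them at $\g$ rather than at $\b$, and replace the triangle-inequality split by the lattice/monotonicity argument.
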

\begin{proof}
	In the proof, we identify $\nu$ and $\nu'$ with their support and write $\nu^\cup$ and $\nu^\cap$ for their union and intersection, respectively. Then, by monotonicity, 
	$$\g(\nu^\cap) \le \min\{\g(\nu), \g(\nu')\} \le \max\{\g(\nu), \g(\nu')\} \le \g(\nu^\cup).$$
	Hence, it suffices to show $\Vert\g(\nu^\cup) - \g(\nu^\cap)\Vert \le \Vert \nu^\cup - \nu^\cap\Vert$. 
	We show this if $\nu^\cup \setminus \nu^\cap$ consists of a singleton $z_0 = \{(t_0, x_0, y_0)\}$. The general statement is obtained via induction. In fact, we  can describe precisely how the space-time counting measures $\g(\nu^\cup)$ and $\g(\nu^\cap)$ differ from each other. If $y_0$ has already been occupied at time $t_0$, then $\g(\nu^\cup)$ and $\g(\nu^\cap)$ agree apart from an additional atom at $z_0$. On the other hand, if $y_0$ has not already been occupied at time $t_0$, then let $z_1 = (t_1, x_1, y_1)$ denote the first particle after time $t_0$ that points to $y_0$. If such a particle does not exist, we leave $z_1$ undefined. Again, $\g(\nu^\cup)$ and $\g(\nu^\cap)$ agree apart from at most one atom, namely $z_1$ if defined.
\end{proof}

\subsection{Mixed exponential equivalence}
Next, we consider intermediate approximations which partially replace the limiting relay measure by the empirical measure. For this, we introduce the mixed augmented empirical measures
$$\LLLd(\muR, \lla) = \sum_{i = 1}^k \Lla^{i}(\muR)\otimes1_{W_i}\frac{\lla}{\lla(W_i)}.$$

\begin{proposition}\label{ExpEquiv_Spatial_0}
	The families of measure-valued processes $\g( \LLL^\de_\la(\muR,\lla),\lla)$ and $\g( \LLL^\de_\la(\muR),\muR)$ are $\Vert\cdot\Vert$-exponentially equivalent.
\end{proposition}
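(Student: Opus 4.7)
The plan is to reduce the comparison to a patch-wise scalar comparison of normalized relay masses, after which the argument proceeds in close analogy to the proof of Proposition~\ref{RelayAppr}. First, using the product representation \eqref{MidSolution} together with the elementary scaling identity $\b(\nu, c) = c\cdot\b(\nu/c, 1)$ (an immediate consequence of comparing \eqref{DE_Cont} with \eqref{DE_Cont_r}), I observe that
\begin{align*}
\bb_t(\LLLd(\muR, \nuR), \nuR)(\d x, W) = \sum_{i = 1}^{k} \b_t(\Lla^i(\muR), \nuR(W_i))(\d x),
\end{align*}
where the sum effectively runs over patches with $\muR(W_i) > 0$ (the others contribute nothing, since then $\mu^i(\muR) = 0$). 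Because the gain term $\LLLd(\muR, \nuR)([0, t], [0, 1], \d x, W) = \sum_i \Lla^i(\muR)([0, t], [0, 1], \d x)$ is independent of $\nuR$, the two $\g$-processes have identical $\nn([0,t],[0,1],\d x,W)$-components, and their difference telescopes into
\begin{align*}
\g_t(\LLLd(\muR, \lla), \lla)(\d x) - \g_t(\LLLd(\muR), \muR)(\d x) = \sum_{i = 1}^{k}\bigl[\b_t(\Lla^i(\muR), \muR(W_i))(\d x) - \b_t(\Lla^i(\muR), \lla(W_i))(\d x)\bigr].
\end{align*}

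Second, within each patch $W_i$ with $\muR(W_i) > 0$, I would couple the two flat-preference processes via the common Poisson driver $Z^{\la, i}(\muR)$ and shared uniform choice variables. Writing $r_{\la, i} = \lla(W_i)$, $s_{\la, i} = \muR(W_i)$, and WLOG $s_{\la, i} \le r_{\la, i}$, the dominance Lemma~\ref{relayDomLem} applied to the scalar total-mass projections inside the patch yields the deterministic uniform bound
\begin{align*}
\sup_{t \in [0, \TF]} \bigl|B^{r_{\la, i}}_{t-}(W)/r_{\la, i} - B^{s_{\la, i}}_{t-}(W)/s_{\la, i}\bigr| \le (r_{\la, i} - s_{\la, i})/s_{\la, i} + (r_{\la, i} \la)^{-1} =: \e_{\la, i}.
\end{align*}
Since $\lla \to \muR$ weakly, $\muR$ is absolutely continuous, and the patches have boundary of null $\muR$-measure, Portmanteau delivers $\lla(W_i) \to \muR(W_i) > 0$, so that $\e_{\la, i}$ is a deterministic null sequence.

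Third, mimicking the proof of Proposition~\ref{RelayAppr}, the measure-valued total variation of the coupled pair admits the estimate
\begin{align*}
\Vert \b(\Lla^i(\muR), r_{\la, i}) - \b(\Lla^i(\muR), s_{\la, i}) \Vert \le \int_0^{\TF} \Lla^i(\muR)\bigl(\d t, [B^{r_{\la, i}}_{t-}/r_{\la, i}, 1] \D [B^{s_{\la, i}}_{t-}/s_{\la, i}, 1], W\bigr),
\end{align*}
and the previsibility argument of Lemma~\ref{Previsible}, transported verbatim to the patch-wise Poisson process and applied to a slice of $u$-width at most $\e_{\la, i}$, dominates the right-hand side by a Poisson variable with vanishing mean. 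Lemma~\ref{AbsoluteContinuity} part~3 then yields super-exponential decay of the probability that it exceeds any prescribed $\e > 0$, and a union bound over the finitely many patches completes the proof. The main obstacle lies in paragraph two: one has to verify that $\e_{\la, i}$ is indeed deterministic and vanishes, which is made possible here by the deterministic nature of the relay configuration $Y^\la$ and the absolute continuity of $\muR$, and without which the Poisson-concentration step of paragraph three could not be invoked.
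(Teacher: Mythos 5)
Your approach coincides with the paper's: both use the patch-wise decomposition coming from the identity~\eqref{localiEq}/\eqref{MidSolution} (and the scaling identity $\b(\nu,c)=c\,\b(\nu/c,1)$ makes this explicit), observe that inside each patch $W_i$ the two processes share the same empirical driver $L_\la^i(\muR)$ and differ only through the normalized relay mass ($\lla(W_i)$ versus $\muR(W_i)\to\lla(W_i)$), and then reduce to the argument for Proposition~\ref{RelayAppr}. Where the paper simply cites Proposition~\ref{RelayAppr} after the reduction, you unwind it in detail --- Lemma~\ref{relayDomLem} for the deterministic relay-mass comparison, Lemma~\ref{Previsible} for the previsibility bound, Lemma~\ref{AbsoluteContinuity} part~3 and a union bound over the finitely many patches --- which is exactly the content of that cited result, so the two proofs are the same in substance.
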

\begin{proof}
	We use the identification~\eqref{localiEq} to decompose $\g( \LLL^\de_\la(\muR,\lla),\lla)$ and $\g( \LLL^\de_\la(\muR),\muR)$ as 
	$$\sum_{i = 1}^k\g\big( \LLL^\de_\la(\muR,\lla)(\d t, \d u, \d x, W_i), \lla(W_i)\big)\,\text{ and }\,\sum_{i = 1}^k\g\big(\LLL^\de_\la(\muR)(\d t, \d u, \d x, W_i), \muR(W_i)\big),$$ 
	respectively. Thus, it suffices to show the exponential equivalence for fixed $i$. By definition, $\LLL^\de_\la(\muR,\lla)(\d t, \d u, \d x, W_i)$ and $\LLL^\de_\la(\muR)(\d t, \d u, \d x, W_i)$ are both empirical measures associated with a Poisson point processes with intensity measure $\la\k_{\muR}(W_i|x)\mu_{\ms T}(\d s, \d u, \d x)$, so that an application of Lemma~\ref{RelayAppr} concludes the proof.
\end{proof}

\subsection{Coupling construction}
\label{couplDiscSec}
The coupling construction announced in the beginning of this section is based on an expansion of the state space $\itf \times W \times \Yla$ to $V^{*,\la} = \itf \times W \times \Yla \times \R_{\ge0}$. This allows Poisson point processes of various inhomogeneous intensities to be coupled by considering points whose last coordinate lies below a threshold function. More precisely,  let $\Zlas$ denote a  Poisson point process on 
$$V^{*,\la} = \itf \times W \times \Yla \times \R_{\ge0}.$$
with intensity measure $\la\must \otimes \mula\otimes |\cdot|$, where $|\cdot|$ is the Lebesgue measure  and
$$\mula =  \musp \otimes \lla.$$
For any family of measurable functions $f_\la:\,  W \times \Yla \to [0,\infty)$ let 
$$M(f) = \{(x, y, v):\,v\le f(x, y) \}$$
denote the sub-level set of $f$. Then, projecting the intersection of $\Zlas$ with $M(f)$ onto $\itf \times W \times \Yla$ yields a Poisson point process $\Zla(f)$ on $\itf \times W \times \Yla$ whose intensity measure $\la\mula^f$ is characterized by
$$\frac{\d \mula^f}{\d(\must \otimes \mula)} =f.$$
For instance, the processes $\Zla$ and $\Zdla(\nuR)$ can be recovered by choosing the threshold function to be $\k_{\lla}(y|x)$ and $\k^\de_{\nuR}(y|x)$, respectively.

For bounded measurable $f,g:W \times \Yla \to [0,\infty)$ we note that the signed counting measure
$\Zla(f) - \Zla(g)$ can be decomposed as $\Zlap(f,g) - \Zlam(f,g)$, where 
$$\Zlap(f,g) = \{(T_i,  X_i, Y_i, V_i)\in \Zlas:\, g(X_i, Y_i) \le V_i \le f(X_i, Y_i)\}$$
and 
$$\Zlam(f,g) = \{(T_i,  X_i, Y_i, V_i)\in \Zlas:\, f(X_i, Y_i) \le V_i \le g(X_i, Y_i)\}.$$
In particular, the $\Vert\cdot\Vert$ distance between $\Zla(f)$ and $\Zla(g)$ can be represented as
\begin{align}
	\label{tvZLem}
	\Vert \Zla(f) - \Zla(g)\Vert  = \max\{\Zlap(f,g)(V^{*,\la}), \Zlam(f,g)(V^{*,\la})\}.
\end{align}
Thus, for arbitrary $\fla, \gla: W\times \Yla \to [0,\infty)$ the distance $\Vert \Zla(f) - \Zla(g)\Vert$ is stochastically bounded by a Poisson random variable with intensity $$\la|\fla - \gla|_{\mula}=\la\int|\fla(x,y) - \gla(x,y)|\mula(\d x,\d y).$$

\subsection{Proof of Propositions~\ref{ExpEquiv_Spatial} and ~\ref{ExpEquiv_Spatial_2}}
We note that Lemma~\ref{contrLem} and identity~\eqref{tvZLem}  allow us to reduce the proof of Propositions~\ref{ExpEquiv_Spatial}  and~\ref{ExpEquiv_Spatial_2} to an intensity bound.

\begin{corollary}
	\label{expAppCor}
	Let $\{\fla\}_{\la>0}$ and $\{\fla^\de\}_{\de, \la>0}$ denote families of non-negative $L^1(\mula)$ functions satisfying
	$$\limsup_{\de \downarrow 0} \limsup_{\la \uparrow\infty}|\fla - \fla^\de|_{\mula} =0.$$
	Then $\g(\Zla(\fla^\de))$ are $\Vert\cdot\Vert$-exponentially good approximations of $\g(\Zla(\fla))$.
\end{corollary}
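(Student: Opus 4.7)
The plan is to verify the definition of exponentially good approximation \cite[Definition 4.2.14]{dz98} directly, by showing that for every $\epsilon > 0$,
\begin{align*}
\limsup_{\de \downarrow 0}\limsup_{\la \uparrow \infty}\la^{-1}\log\P\bigl(\Vert\g(\Zla(\fla)) - \g(\Zla(\fla^\de))\Vert > \epsilon\bigr) = -\infty.
\end{align*}
All of the real work has been discharged into the two preparatory results of Section~\ref{totVarBoundSec}: the $2$-Lipschitz bound of Lemma~\ref{contrLem} absorbs the entire complexity of the driving ODE and reduces the task to controlling $\Vert \Zla(\fla) - \Zla(\fla^\de)\Vert$, while the sub-level-set construction of Subsection~\ref{couplDiscSec} realises $\Zla(\fla)$ and $\Zla(\fla^\de)$ on the single auxiliary Poisson process $\Zlas$.

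Under this coupling, identity~\eqref{tvZLem} rewrites the input-level distance as $\max\{\Zlap(\fla,\fla^\de)(V^{*,\la}),\,\Zlam(\fla,\fla^\de)(V^{*,\la})\}$, and by Poisson thinning the two counts on the right are independent Poisson random variables with intensities $\la\int(\fla-\fla^\de)^\pm\,\d\mula$ respectively. Their sum is therefore a Poisson random variable $N_\la$ of parameter $\la\, a(\la,\de)$, where $a(\la,\de):=|\fla-\fla^\de|_{\mula}$. Chaining with Lemma~\ref{contrLem} yields the stochastic bound
\begin{align*}
\P\bigl(\Vert\g(\Zla(\fla)) - \g(\Zla(\fla^\de))\Vert > \epsilon\bigr) \le \P(N_\la > \epsilon\la/2),
\end{align*}
where the factor $\la$ on the right-hand side reflects the $\la^{-1}$-normalisation built into the processes of frustrated transmitters.

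To close, the hypothesis $\limsup_{\de\downarrow0}\limsup_{\la\uparrow\infty}a(\la,\de)=0$ combined with the standard Chernoff inequality for Poisson tails gives, whenever $a(\la,\de)<\epsilon/2$,
\begin{align*}
\la^{-1}\log\P(N_\la > \epsilon\la/2) \le -\tfrac{\epsilon}{2}\log\tfrac{\epsilon}{2a(\la,\de)} + \tfrac{\epsilon}{2} - a(\la,\de),
\end{align*}
and the right-hand side diverges to $-\infty$ in the iterated limit, exactly as in the proof of Lemma~\ref{AbsoluteContinuity} part~3. I anticipate no real obstacle: once Lemma~\ref{contrLem} and the coupling of Subsection~\ref{couplDiscSec} are in hand, the corollary reduces to this one-line Poisson concentration estimate.
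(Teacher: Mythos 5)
Your proposal is correct and takes essentially the same route as the paper: first Lemma~\ref{contrLem} reduces the problem to bounding the coupling distance $\Vert\Zla(\fla) - \Zla(\fla^\de)\Vert$, then identity~\eqref{tvZLem} identifies that distance with the mass of $\Zlas$ in a region of $\mula$-measure $|\fla-\fla^\de|_{\mula}$, so that a Poisson random variable of parameter $\la|\fla-\fla^\de|_{\mula}$ dominates, and Poisson concentration in the iterated limit finishes. The paper compresses the final step into a citation of Lemma~\ref{AbsoluteContinuity} part~3, whereas you spell out the Chernoff estimate; the content is the same.
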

\begin{proof}
	Let $\e>0$ be arbitrary. First, by Lemma~\ref{contrLem}, it suffices to provide suitable bounds on $\P(\Vert \Zla(\fla) - \Zla(\fla^\de)\Vert > \e)$. Now, the identification~\eqref{tvZLem} transforms the distance between $\Zla(\fla)$ and $\Zla(\fla^\de)$ into the mass of the coupling Poisson process in domains of vanishing $\mula$-measure. Hence, an application of part 3 of Lemma~\ref{AbsoluteContinuity} concludes the proof.
\end{proof}

Next, we provide an example of an intensity bound that will also be relevant for the identification of the rate function in the following section. For this purpose, we introduce the mixed preference functions
$$\k^\de_{\nuR, \nuR'}(y|x) = \sum_{i = 1}^k \one\{y\in W_i\} \frac{\k_{\nuR}(W_i|x)}{\nuR'(W_i)}$$
and the associated intensity measure $\mu^\de(\nuR, \nuR')$ determined by
$$ \frac{\d \mu^\de(\nuR,\nuR')}{\d(\must \otimes \musp \otimes \nuR')}(t, x, y) = \k^\de_{\nuR, \nuR'}(y|x).$$
\begin{lemma}
	\label{intBoundLem}
	It holds that
	$\lim_{\de \downarrow 0}|\k_{\muR} - \k^\de_{\muR, \muR}|_{{\musp \otimes \muR}} = 0$.
\end{lemma}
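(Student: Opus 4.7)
\textbf{Proof plan for Lemma~\ref{intBoundLem}.}
The plan is to recognize $\k^\de_{\muR, \muR}(\cdot|x)$ as the $\muR$-conditional expectation of $\k_{\muR}(\cdot|x)$ with respect to the finite $\sigma$-algebra $\FF_\de$ generated by the partition $W^\de = \{W_1,\dots,W_k\}$, and then combine Doob's $L^1$ martingale convergence theorem (applied for each fixed $x$) with dominated convergence over $x$.

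First I would check that for every $x \in W$ the denominator $\int \k(x,z)\muR(\d z)$ is strictly positive. By the standing assumption there exists $y_0 \in \supp(\muR)$ such that $\k(x,y_0) > 0$ and $(x,y_0)$ is a continuity point of $\k$; hence $\k(x,\cdot)$ is bounded below by a positive constant on a neighborhood $U$ of $y_0$, and $\muR(U) > 0$ since $y_0 \in \supp(\muR)$. Thus $\k_{\muR}(\cdot|x)$ is a well-defined probability density with respect to $\muR$. A direct computation shows that $\k^\de_{\muR, \muR}(\cdot|x)$ is also a probability density w.r.t.~$\muR$: it is constant on each atom $W_i$ with $\muR(W_i) > 0$ with value $\muR(W_i)^{-1}\int_{W_i}\k_{\muR}(z|x)\muR(\d z)$ (atoms of $\muR$-measure zero may be discarded using the convention $0/0 = 0$). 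This identifies
\begin{equation*}
\k^\de_{\muR, \muR}(\cdot|x) \;=\; \E_{\muR}\!\bigl[\k_{\muR}(\cdot|x)\,\big|\,\FF_\de\bigr] \qquad \muR\text{-a.s.}
\end{equation*}

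Next I would use that, along any sequence $\de_n \downarrow 0$ along which the $\de$-cubes refine (e.g.~dyadic scales), the $\sigma$-algebras $\FF_{\de_n}$ increase to the Borel $\sigma$-algebra of the compact set $W$. Doob's $L^1$ martingale convergence theorem, applied for each fixed $x$, then yields
\begin{equation*}
\int_W \bigl|\k_{\muR}(y|x) - \k^\de_{\muR, \muR}(y|x)\bigr|\,\muR(\d y) \;\xrightarrow[\de\downarrow 0]{}\; 0.
\end{equation*}
Finally, the integrand on the left-hand side is uniformly bounded by $2$ since both $\k_{\muR}(\cdot|x)$ and $\k^\de_{\muR, \muR}(\cdot|x)$ are probability densities with respect to $\muR$. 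Since $\musp$ is a finite measure on $W$, dominated convergence in the variable $x$ gives
\begin{equation*}
|\k_{\muR} - \k^\de_{\muR, \muR}|_{\musp \otimes \muR} \;=\; \int_W \!\int_W \bigl|\k_{\muR}(y|x) - \k^\de_{\muR, \muR}(y|x)\bigr|\,\muR(\d y)\,\musp(\d x) \;\to\; 0,
\end{equation*}
as required. I do not anticipate any substantial obstacle here: the statement is essentially the $L^1$-contraction of conditional expectations, and the only mild technical point is the handling of partition atoms with $\muR(W_i) = 0$, which is absorbed by the natural $0/0 = 0$ convention consistent with the conditional-expectation interpretation.
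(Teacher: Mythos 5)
Your proposal is correct, and it takes a genuinely different route from the paper. The paper expands $|\k_{\muR} - \k^\de_{\muR,\muR}|_{\musp\otimes\muR}$ as a double integral, observes that the integrand is uniformly bounded, and reduces via dominated convergence to a pointwise statement, namely that for $\musp\otimes\muR$-a.e.~$(x,y)$ the $\muR$-average of $\k_{\muR}(\cdot|x)$ over the shrinking cube containing $y$ tends to $\k_{\muR}(y|x)$; this pointwise convergence is then read off the Lebesgue density (differentiation) theorem, which is where the standing assumption that $\muR\ll\text{Leb}$ is consumed. You instead recognize $\k^\de_{\muR,\muR}(\cdot|x)$ as the conditional expectation $\E_{\muR}[\k_{\muR}(\cdot|x)\mid\FF_\de]$ and appeal to Doob's $L^1$ martingale convergence, then integrate in $x$ with the clean dominating bound $2$ coming from both functions being probability densities for $\muR$. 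Both arguments land in the same place, but your structural identification makes the $L^1$-contractivity of the map $f\mapsto\k^\de_{\cdot,\cdot}$ manifest, and your approach would in fact work without absolute continuity of $\muR$.

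One small technical gap: Doob's forward martingale convergence requires a \emph{filtration}, i.e.~a nested family of $\sigma$-algebras, whereas the partitions $W^\de$ for general $\de\downarrow 0$ need not refine one another. You flag this yourself by restricting to dyadic scales, but the lemma asserts $\lim_{\de\downarrow0}$ over all $\de$. There are two standard fixes. Either observe that in the downstream exponential-approximation arguments only a sequence of scales $\de_n\downarrow 0$ is needed, so the dyadic restriction is harmless; or replace Doob's theorem by the elementary argument that $\|\E_{\muR}[f\mid\FF_\de]-f\|_{L^1(\muR)}\to 0$ as $\de\downarrow 0$ for arbitrary (non-nested) mesh: prove it for continuous $f$ by uniform continuity of $f$ on the compact $W$, then pass to general $f\in L^1(\muR)$ using that $\E_{\muR}[\cdot\mid\FF_\de]$ is an $L^1$-contraction and continuous functions are dense. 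Either way your proof closes; it is only the invocation of Doob that needs adjusting.
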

\begin{proof}
	Expanding the definitions, we see that the claim is equivalent to proving that 
	$$\lim_{\de\downarrow0}\int_{W^2} \sum_{i = 1}^k\one\{y\in W_i\}\Big| \k_{\muR}(y|x) - \frac{\int_{W_i} \k_{\muR}(y'|x) \muR(\d y')}{\muR(W_i)}\Big| ({{\musp \otimes \muR}})(\d x, \d y) = 0.$$
	By dominated convergence it suffices to show that the integrand tends to zero for $\musp \otimes \muR$-almost every $(x,y)$. But this is a consequence of the Lebesgue density theorem~\cite{rudin}.
\end{proof}

Now, we can prove Propositions~\ref{ExpEquiv_Spatial} and~\ref{ExpEquiv_Spatial_2}.
\begin{proof}[Proof of Propositions~\ref{ExpEquiv_Spatial} and ~\ref{ExpEquiv_Spatial_2}]

	By Proposition~\ref{ExpEquiv_Spatial_0}, Corollary~\ref{expAppCor} and Lemma~\ref{intBoundLem} it suffices to show that 
	\begin{align}
		\label{intBoundEq1}
		\lim_{\la\uparrow\infty}|\k^\de_{\muR, \lla} - \k^\de_{\lla, \lla}|_{\mula} = 0
	\end{align}
	for every $\de >0$, and 
	\begin{align}
		\label{intBoundEq2}
		\lim_{\la \uparrow \infty} |\k_{\lla} - \k^\de_{\lla, \lla}|_{\mula} = |\k_{\muR} - \k^\de_{\muR, \muR}|_{\mulim}.
\end{align}
We begin by considering~\eqref{intBoundEq1}.	First, $|\k^\de_{\muR, \lla} - \k^\de_{\lla, \lla}|_{\mula}$ is given by 
$$\sum_{i = 1}^k \int_{W} |\k_{\muR}(W_i | x) - \k_{\lla}(W_i | x)| \musp(\d x).$$ 
	In particular, by dominated convergence, it suffices to show that the integrand converges to zero for $\must$-almost every $x \in W$. Hence, let $1\le i \le k$ and $x\in W$ be arbitrary. Then, $\k_{\muR}(W_i | x) - \k_{\lla}(W_i | x)$ is given by 
	$$\frac{\int_{W_i} \k(x, y) \muR(\d y)}{\int_W \k(x, y) \muR(\d y)} - \frac{\int_{W_i} \k(x, y) \lla(\d y)}{\int_W \k(x, y) \lla(\d y)}.$$
	Disregarding a $\must$-nullset, we may assume that $\k(x,\cdot)$ is $\muR$-almost everywhere continuous, so that the weak convergence $\lla \to \muR$ implies~\eqref{intBoundEq1}.

	For the proof of~\eqref{intBoundEq2} note that, by dominated convergence, it suffices to show that for $\musp$-almost every $x$, 
	$$\lim_{\la\uparrow\infty}|\k_{\lla}(\cdot | x) - \k^\de_{\lla, \lla}(\cdot | x)|_{\lla} = |\k_{\muR}(\cdot | x) - \k^\de_{\muR, \muR}(\cdot | x)|_{\muR}.$$
	First, as $\muR$ is the weak limit of $\lla$, both $|\k_{\lla}(\cdot | x) - \k_{\muR}(\cdot | x)|_{\lla}$ and $|\k^\de_{\lla, \lla}(\cdot | x) - \k^\de_{\muR, \muR}(\cdot | x)|_{\lla}$ tend to zero as $\la$ tends to infinity. Therefore, it remains to show that 
	$$\lim_{\la\uparrow\infty}|\k_{\muR}(\cdot | x) - \k^\de_{\muR,\muR}(\cdot | x)|_{\lla} = |\k_{\muR}(\cdot | x) - \k^\de_{\muR}(\cdot | x)|_{\muR},$$
	which again is a consequence of the weak convergence of the relay measure.
\end{proof}

\subsection{Coupling construction for absolutely continuous measures}
\label{couplContSec}
It should not come as a surprise that similar to what we have seen in the empirical setting in Section~\ref{couplDiscSec}, couplings play a vital r\^ole in the identification of the rate function. The procedure in the absolutely continuous setting is very similar, but some care has to be taken since the empirical measures $\lla$ need to be replaced by the limiting measure $\muR$ and the unit interval is added to the state space.
More precisely, we consider measures on the space 
$$V^* = \itfiww \times [0, \k_\infty],$$
where $\k_\infty = \sup_{x, y\in W}\k_{\muR}(y | x)$. To simplify notation, we write $\k$ and $\g(\cdot)$ instead of the more verbose $\k_{\muR}$ and $\g(\cdot, \muR)$. 
If $f:\,W^2 \to [0,\k_\infty]$ is a measurable function and $\nns \in \MMs = \MM(V^*)$, then we let $\nns(f)$ denote the measure on $V^*$ that is defined by restriction to the sublevel set $M(f)$ and forgetting the last coordinate.  For instance, we can recover previously introduced intensity measures as 
$$\mu(\muR) = \mu^*(\k)\qquad \text{ and }\qquad \mu^\de(\muR, \muR) = \mu^*(\k^\de),$$
where
$$\mu^* = \must \otimes U \otimes \musp \otimes \muR \otimes |\cdot|.$$
The decisive advantage offered by the couplings is that they allow for an elegant way of expressing total-variation distances. More precisely, for bounded measurable $f,g:\, W^2 \to [0,\k_\infty]$ we note that the signed measure
$\nn^*(f) - \nn^*(g)$ can be decomposed as $\nnsp(f,g) - \nnsm(f,g)$, where 
$$\frac{\d \nnsp(f,g)}{\d \nn^*}(t,u,x,y,v) =  \one\{ g(x,y)\le v \le f(x,y)  \} $$
and 
$$\frac{\d \nnsm(f,g)}{\d \nn^*}(t,u,x,y,v) =  \one\{ f(x,y)\le v \le g(x,y) \}.$$
In particular, 
\begin{align}
	\label{tvDefEq}
	\Vert\nns(f) - \nns(g)\Vert = \max\{\nnsp(f,g)(V^*), \nnsm(f,g)(V^*)\}.
\end{align}

\subsection{Uniform spatial approximation property}
The coupling introduced in the previous section brings us into the setting of~\cite[Theorem 4.2.23]{dz98} where both the desired rate functions and the rate functions of the approximations are of contraction type. For the rate-function approximation to be useful in the exponential-approximation argument, we need to verify that the approximations are uniform on measures with bounded entropy, i.e., on
$$\Ia = \{\nns \in \MMs:\, h(\nns | \mu^*) \le \alpha\}.$$

\begin{lemma}
	\label{uniformLem}
	Let $\alpha>0$ be arbitrary. Then, $\lim_{\de \downarrow 0} \sup_{\substack{\nns \in \Ia}}\Vert\g(\nn^*(\k)) - \g(\nn^*(\k^\de))\Vert = 0$.
\end{lemma}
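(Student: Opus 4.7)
The plan is to reduce the claim to a total-variation bound on the driving measures $\nn^*(\k)$ and $\nn^*(\k^\de)$, and then to combine the coupling identity~\eqref{tvDefEq} with the uniform absolute continuity supplied by Lemma~\ref{AbsoluteContinuity} part 2.

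First, I would establish an analog of Lemma~\ref{contrLem} for absolutely continuous drivers, namely that there is a constant $C$ such that
\[
\Vert \g(\nn_1) - \g(\nn_2)\Vert \le C \Vert \nn_1 - \nn_2\Vert
\]
for any $\nn_1, \nn_2 \in \MM'$ that are absolutely continuous w.r.t.~$\mu(\muR)$. The counting-measure proof of Lemma~\ref{contrLem} does not transfer verbatim, but the same monotonicity comparison works: writing $\nn^{\cap}$ and $\nn^{\cup}$ for the measures whose densities w.r.t.~$\mu(\muR)$ are the pointwise minimum and maximum of those of $\nn_1$ and $\nn_2$, the uniqueness argument of Proposition~\ref{ExistenceUniqueness} yields $\bb(\nn^{\cap}) \le \bb(\nn_j) \le \bb(\nn^{\cup})$ for $j = 1, 2$. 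A direct comparison of the defining equation~\eqref{DE_Cont_Gen} for $\nn^{\cap}$ and $\nn^{\cup}$ then shows that the extra mass $\nn^{\cup} - \nn^{\cap}$ can push $\bb$ up by at most that same amount, giving $\Vert \bb(\nn^{\cup}) - \bb(\nn^{\cap})\Vert \le \Vert \nn^{\cup} - \nn^{\cap}\Vert = \Vert \nn_1 - \nn_2\Vert$; the analogous bound for $\g$ follows from~\eqref{frustUserDef}.

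Next, applying this Lipschitz estimate with $\nn_1 = \nn^*(\k)$ and $\nn_2 = \nn^*(\k^\de)$ reduces the problem to bounding $\Vert \nn^*(\k) - \nn^*(\k^\de)\Vert$. By the coupling identity~\eqref{tvDefEq},
\[
\Vert \nn^*(\k) - \nn^*(\k^\de)\Vert \le \nn^*(R_\de),
\]
where $R_\de = \{(t,u,x,y,v) \in V^* : \min(\k,\k^\de)(x,y) \le v \le \max(\k,\k^\de)(x,y)\}$. Since $\must$ and $U$ are probability measures, a Fubini computation gives $\mu^*(R_\de) = |\k - \k^\de|_{\musp \otimes \muR}$, and this vanishes as $\de \downarrow 0$ by Lemma~\ref{intBoundLem}.

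Finally, since every $\nn^* \in \Ia$ satisfies $h(\nn^* \mid \mu^*) \le \alpha$, the entropy version of absolute continuity in Lemma~\ref{AbsoluteContinuity} part 2 (which generalizes from $\mu_{\ms T}$ to $\mu^*$ without change, as its proof uses only Jensen's inequality) yields $\sup_{\nn^* \in \Ia} \nn^*(R_\de) \to 0$ as $\de \downarrow 0$, completing the argument. The main technical hurdle is the Lipschitz bound in the first step: the comparison $\bb(\nn^{\cap}) \le \bb(\nn_j) \le \bb(\nn^{\cup})$ is the correct substitute for the atomic enumeration used in Lemma~\ref{contrLem}, but verifying the bound $\Vert \bb(\nn^{\cup}) - \bb(\nn^{\cap})\Vert \le \Vert \nn^{\cup} - \nn^{\cap}\Vert$ uniformly in time and over all measurable subsets of $W$ requires a careful induction on a discretization of time in the spirit of Lemmas~\ref{trivCompLem} and~\ref{NormApprox}.
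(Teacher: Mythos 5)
There is a genuine gap in the claimed monotonicity $\bb(\nn^{\cap}) \le \bb(\nn_j) \le \bb(\nn^{\cup})$: the process of satisfied transmitters $\bb$ is \emph{not} monotone in the driving measure when evaluated on arbitrary measurable sets. What the uniqueness/monotonicity argument of Proposition~\ref{ExistenceUniqueness} gives is only the ordering of the \emph{scalar} threshold $\b_s(\nu)(W)$. For a subset $A\subsetneq W$, increasing the driving measure outside $A$ raises this threshold and can thereby block transmitters in $A$ that were previously satisfied, so $\b_t(\nu^\cup)(A) < \b_t(\nu^\cap)(A)$ is entirely possible. The object that \emph{is} monotone is $\g$: writing $\g_t(\nu)(A)=\int_0^t \nu(\d s,[0,\b_{s-}(\nu)(W)),A)$, an increase of $\nu$ both enlarges the integrating measure and enlarges the interval $[0,\b_{s-}(\nu)(W))$, so $\nu\le\nu'$ does give $\g(\nu)\le\g(\nu')$ set by set. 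If you replace the $\bb$-ordering by this $\g$-ordering, the squeeze $0\le\g_t(\nn^\cup)(A)-\g_t(\nn^\cap)(A)\le\g_t(\nn^\cup)(W)-\g_t(\nn^\cap)(W)$ combined with the scalar bound of Lemma~\ref{scaleBetLem} (applied per relay location $y$ after disintegration, and then integrated against $\muR$) recovers the Lipschitz estimate up to a constant, and the rest of your proposal then goes through. Note also the minor quantitative slip $\Vert\nn^\cup-\nn^\cap\Vert=\Vert\nn_1-\nn_2\Vert$: these differ by a factor of at most $2$, which is harmless.

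Once corrected, your route is genuinely different from the paper's. The paper never establishes a global Lipschitz bound for absolutely continuous drivers; instead it disintegrates over the relay coordinate and, for each $y$, splits the difference $\b(\nns_y(\k))-\b(\nns_y(\k^\de))$ into a term controlled by $\Vert\nns(\k)-\nns(\k^\de)\Vert$ and a ``threshold-displacement'' term, which it estimates by the $\nns(\k)$-mass of the thin strip $C_{\nns,\de}$ and controls via a \emph{second} application of the uniform absolute continuity from Lemma~\ref{AbsoluteContinuity} part~2. Your approach instead pushes all the $\de$-dependence into the single coupling quantity $\nns(R_\de)$, absorbing the threshold-displacement term into the Lipschitz constant; this only invokes Lemma~\ref{AbsoluteContinuity} once. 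The two routes are comparable in length, but the Lipschitz version is conceptually tidier, at the price of having to state and prove the monotonicity-plus-squeeze lemma carefully (with $\g$, not $\bb$).
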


Before we prove Lemma~\ref{uniformLem}, we explain how it can be used to derive Proposition~\ref{ProjLimLDP_3}.

\begin{proof}[Proof of Proposition~\ref{ProjLimLDP_3}]
	Although Lemma~\ref{uniformLem} is the main ingredient for the exponential approximation~\cite[Theorem 4.2.23]{dz98}, there are still two further steps that remain to be verified. First, we need to check that the contraction-type rate functions
	$$\inf_{\nns \in \MMs:\, (\g_{t_i}(\nns(\k)))_i = (\g_{t_i})_i} h(\nns | \mus)\qquad\text{ and } \inf_{\nn \in \MM':\, (\g_{t_i}(\nn))_i = (\g_{t_i})_i} h(\nn |\mu( \muR))$$
	are identical.
	Second, the continuity of the map $\Ia\to\MM(W)^{\itf}$, $\nns \mapsto \g(\nns(\k^\de))$ needs to be justified. In order to verify the identity of the rate functions, we prove that 
	\begin{align}
\label{rateIdentEq}
	\inf_{\nns \in \MMs:\, \nns(\k) = \nn} h(\nns | \mus) = h(\nn | \mu(\muR)).
\end{align}
Showing that the l.h.s.~is at most as large as the r.h.s.~is achieved by setting 
	$$\nns_0(\d t, \d u, \d x, \d y, \d v) =\k(y|x)^{-1} \one\{M(\k)\} \nn(\d t, \d u, \d x, \d y) \d v + \one\{M(\k)^c\}\mu^*(\d t, \d u, \d x, \d y,\d v).$$
	For the reverse inequality it can be checked by direct computation that 
	$$h(\nns | \mus) = h(\nns | \nn_0^*) + h(\nn | \mu(\muR)),$$
	so that the non-negativity gives~\eqref{rateIdentEq}.

	Second, we show that for fixed $\de>0$ the map $\nns \mapsto \g(\nns(\k^\de))$ is continuous. For this note that  $\g(\nns(\k^\de))$ decomposes as
	$$\g(\nns(\k^\de)) = \sum_{i = 1}^k \g(\nns_i(\k^\de)),$$
	where 
	$$\nns_i(\k^\de)(\d t, \d u, \d x) = \nns(\k^\de)(\d t, \d u, \d x, W_i)$$ 
	is the restriction of $\nns(\k^\de)$ to the points whose $y$-coordinate is in $W_i$. Similarly, put 
	$$\mu_i(\muR)(\d t, \d u, \d x) = \mu^\de(\muR, \muR)(\d t, \d u, \d x, W_i).$$ 
	Since $h(\nns_i(\k^\de) | \mu_i(\muR)) \le \alpha$, we deduce from Proposition~\ref{DZApprox} that $\g$ is continuous at $\nns_i(\k^\de)$. Combining this observation with the continuity of the partial evaluation maps $\nns\mapsto\nns_i(\k^\de)$ concludes the proof.
\end{proof}

Hence, it remains to prove Lemma~\ref{uniformLem}. We recall from equation~\eqref{MidSolution} that the process $\b(\nn)$ is obtained as a spatial mixture of the corresponding localized processes $\b(\nn_y)$ at receiver locations $y \in W$. Therefore, understanding how sensitive the localized processes are w.r.t.~their input measures lays the groundwork for the global setting.

\begin{lemma}
	\label{scaleBetLem}
	Let $\nu, \nu' \in \MM_{\ms{ac}}$, then $\Vert\b(\nu)(W) - \b(\nu')(W)\Vert \le \Vert\nu - \nu'\Vert$.
\end{lemma}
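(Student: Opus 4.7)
The plan is to exploit the self-regulating nature of \eqref{DE_Cont}: whenever $f_s := \b_s(\nu)(W)$ lies above $g_s := \b_s(\nu')(W)$, the admissible choice interval $[f_s,1]$ driving $\b(\nu)$ is strictly contained in $[g_s,1]$ driving $\b(\nu')$, so $\nu$ on its own cannot pull the two total masses further apart. The only possible source of additional separation is the signed driving difference $\nu - \nu'$, and the task is to make this intuition quantitative.

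Concretely, I would fix $t\in[0,\TF]$ and, by symmetry, assume $f_t\ge g_t$. Using the continuity of $f$ and $g$ supplied by Proposition~\ref{ExistenceUniqueness} for $\nu,\nu'\in\MM_{\ms{ac}}$, together with $f_0 = g_0 = 0$, set $t_0 := \sup\{s\in[0,t] : f_s\le g_s\}$, so that $f_{t_0} = g_{t_0}$ and $f_s\ge g_s$ on $(t_0,t]$. Subtracting the two instances of~\eqref{DE_Cont} at the times $t$ and $t_0$, then using the inclusion $[f_s,1]\subseteq[g_s,1]$ together with non-negativity of $\nu$, gives
\begin{align*}
0\le f_t-g_t
  &= \int_{t_0}^t\nu(\d s,[f_s,1],W)-\int_{t_0}^t\nu'(\d s,[g_s,1],W)\\
  &\le \int_{t_0}^t\nu(\d s,[g_s,1],W)-\int_{t_0}^t\nu'(\d s,[g_s,1],W)\\
  &= \int_{t_0}^t(\nu-\nu')(\d s,[g_s,1],W).
\end{align*}

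The final line is the pairing of the signed measure $\nu - \nu'$ with the indicator of the measurable subset $\{(s,u,x)\in V : t_0 < s\le t,\,u\ge g_s\}$ of $V$, hence is bounded in absolute value by $\Vert\nu-\nu'\Vert$. Reversing the roles of $\nu$ and $\nu'$ handles the opposite case, and taking the supremum over $t$ yields the claim. The step that needs the most care is the localization at $t_0$: it relies crucially on the continuity of $f$ and $g$, which is furnished by the absolutely continuous setting but would fail for atomic driving measures (where a jump-by-jump comparison in the spirit of Lemma~\ref{trivCompLem} would be required instead).
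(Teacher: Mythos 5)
Your proof is correct and follows essentially the same approach as the paper: localize at the last crossing time $t_0$, use the self-regulating inclusion $[\b_s(\nu),1]\subseteq[\b_s(\nu'),1]$ to pass the $\nu$-integral to the reference level set, and bound the remaining signed-measure integral by $\Vert\nu-\nu'\Vert$. The only cosmetic difference is that you present this as a single chain of inequalities while the paper splits the difference into a non-positive term and a term bounded by $\Vert\nu-\nu'\Vert$; these are the same calculation.
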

\begin{proof}
	Let $t \in \itf$ be arbitrary. 
	By symmetry, it suffices to derive an upper bound for $\b_t(\nu) - \b_t(\nu')$, where for ease of notation we omit the evaluation on $W$. Now, let $t_0 \in \itf$ be the last point before $t$ such that 
	$$\b_{t_0}(\nu) \le \b_{t_0}(\nu'),$$
	then, 
	$$\b_t(\nu) - \b_t(\nu') = \int_{t_0}^t \nu(\d s,[\b_s(\nu), 1] , W) - \int_{t_0}^t \nu'(\d s,[\b_s(\nu'), 1] , W).$$
	This difference can be split up into 
	$$\int_{t_0}^t \nu(\d s,[\b_s(\nu), 1] , W) - \int_{t_0}^t \nu(\d s,[\b_s(\nu'), 1] , W)$$
	and 
	$$\int_{t_0}^t \nu(\d s,[\b_s(\nu'), 1] , W) - \int_{t_0}^t \nu'(\d s,[\b_s(\nu'), 1] , W),$$
	where we know that the first expression is negative and therefore can be omitted. It remains to study the last expression, which is at most
	$\Vert\nu - \nu'\Vert,$
	as required.
\end{proof}

Now we use Lemma~\ref{scaleBetLem} to complete the derivation of Lemma~\ref{uniformLem}.

\begin{proof}[Proof of Lemma~\ref{uniformLem}]
	First, 
	$$\Vert\nns(\k)([0,t], [0,1], \d x) - \nns(\k^\de)([0,t], [0,1], \d x)\Vert \le \Vert\nns(\k) - \nns(\k^\de)\Vert$$
	so that by Lemma~\ref{AbsoluteContinuity} part 2, Lemma~\ref{intBoundLem} and identity~\eqref{tvDefEq} it remains to prove the statement with $\g$ replaced by $\b$.  	By absolute continuity, we can perform disintegration of the measures $\nns(\k)$ and $\nns(\k^\de)$ with respect to the relay coordinate. That is, 
	$$\nns(\k)(\d t, \d u, \d x, \d y) = \nns_y(\k)(\d t, \d u, \d x)\muR(\d y)$$
	and 
	$$\nns(\k^\de)(\d t, \d u, \d x, \d y) = \nns_y(\k^\de)(\d t, \d u, \d x)\muR(\d y).$$
	Let $t\in[0,\TF]$ and $A\subset W$ measurable. Then, inserting the definition of $\b$ we see that we need to compare
	$$\int_{\itW} \nns_y(\k)(\d s, [\b_{s}(\nns_y(\k))(W), 1] \times A) \muR(\d y)$$
	with 
	$$\int_{\itW} \nns_y(\k^\de)(\d s, [\b_s(\nns_y(\k^\de))(W), 1] \times A) \muR(\d y).$$
	We decompose this task into providing bounds separately for 
	$$\Big|\int_{\itW} (\nns_y(\k)-\nns_y(\k^\de))(\d s, [\b_s(\nns_y(\k^\de))(W), 1] \times A) \muR(\d y)\Big|$$
	and
	$$\int_{\itfW} \nns_y(\k)(\d s, \I(\b_s(\nns_y(\k))(W), \b_s(\nns_y(\k^\de))(W)) \times W) \muR(\d y),$$
	where 
	$\I(a, b) = [a \wedge b, a \vee b]. $
	The first expression is at most $\Vert \nns(\k) - \nns(\k^\de)\Vert$, so that again Lemma~\ref{AbsoluteContinuity} part 2, Lemma~\ref{intBoundLem} and identity~\eqref{tvDefEq}  yield that
	$$\lim_{\de \downarrow 0}\sup_{ \nns \in  \Ia}\Vert\nns(\k) - \nns(\k^\de)\Vert = 0.$$
	By Lemma~\ref{scaleBetLem}, the second expression is bounded above by $\nns(\k)(C_{\nns, \de})$ where
	$$C_{\nns, \de} = \{(t, u, x, y):\, |u-\b_t(\nns_y(\k))(W)| \le \Vert\nns_y(\k) - \nns_y(\k^\de)\Vert\}.$$
	In particular, by Lemma~\ref{AbsoluteContinuity} part 2 it remains to show that 
	$\lim_{\de \downarrow 0}\sup_{\nns \in \Ia} \mu(\muR)(C_{\nns, \de}) = 0.$
	For this, we note that reversing the disintegration of the relay measure gives that
	\begin{align*}
		\mu(\muR)(C_{\nns, \de}) &\le 2\int_{W^2}\Vert\nns_y(\k) - \nns_y(\k^\de)\Vert \k(y|x) (\musp \otimes \muR) (\d x, \d y) \\
		&\le  2\musp(W)\k_\infty \int_W \max\{\nnsp_y(\k, \k^\de) , \nnsm_y(\k, \k^\de)\} \muR(\d y) \\
		&\le 2\musp(W)\k_\infty (\nnsp(\k, \k^\de) + \nnsm(\k, \k^\de)),
	\end{align*}
	so that another invocation of Lemma~\ref{AbsoluteContinuity} part 2 and Lemma~\ref{intBoundLem} concludes the proof.
\end{proof}

\section*{Acknowledgments}
This research was supported by the Leibniz program Probabilistic Methods for Mobile Ad-Hoc Networks. The authors thank P.~Keeler, W.~K\"onig and M.~Renger for interesting discussions and comments.

\bibliography{../../wias}
\bibliographystyle{abbrv}

\end{document}